\newcommand{\cA}{\mathcal{A}}
\newcommand{\cB}{\mathcal{B}}
\newcommand{\cC}{\mathcal{C}}
\newcommand{\cG}{\mathcal{G}}
\newcommand{\cH}{\mathcal{H}}
\newcommand{\cI}{\mathcal{I}}
\newcommand{\cL}{\mathcal{L}}
\newcommand{\cM}{\mathcal{M}}
\newcommand{\cO}{\mathcal{O}}
\newcommand{\cS}{\mathcal{S}}
\newcommand{\cU}{\mathcal{U}}
\newcommand{\rA}{\mathrm{A}}
\newcommand{\rB}{\mathrm{B}}
\newcommand{\rC}{\mathrm{C}}
\newcommand{\rW}{\mathrm{W}}
\newcommand{\rf}{\mathrm{f}}
\newcommand{\rh}{\mathrm{h}}
\newcommand{\rp}{\mathrm{p}}
\newcommand{\ru}{\mathrm{u}}
\newcommand{\Si}{\Sigma} 
\newcommand{\bC}{\mathbb{C}}
\newcommand{\bT}{\mathbb{T}}
\newcommand{\bM}{\mathbb{M}}
\newcommand{\bN}{\mathbb{N}}
\newcommand{\bU}{\mathbb{U}}
\newcommand{\bZ}{\mathbb{Z}}
\newcommand{\si}{\sigma}
\newcommand{\e}{\epsilon}
\newcommand{\io}{\iota}
\newcommand {\norm}[1]{\Vert{#1}\Vert}    
\newcommand{\ad}{\mathrm{ad}}
\author{\textsc{Giuseppe Ruzzi$^{1}$ and Ezio Vasselli$^{2}$}\footnote{Both the authors are supported by the  EU network ``Noncommutative Geometry" MRTN-CT-2006-0031962.}\\
  \null\\
\small{$^{1}$Dipartimento di Matematica, Universit\`a di Roma ``Tor Vergata'',}\\
\small{Via della Ricerca Scientifica, I-00133 Roma,  Italy.}  \\
\small{\texttt{ruzzi@mat.uniroma2.it}} \\[5pt]
\small{$^{2}$Dipartimento di Matematica, Universit\`a di Roma ``La Sapienza'',}\\
\small{Piazzale Aldo Moro 5, I-00185 Roma, Italy.}\\
 \small{\texttt{ vasselli@mat.uniroma2.it  }}\\[20pt]
{\bf\small{Dedicated to  John E. Roberts on the occasion of his   
            seventieth  birthday}}}
\date{}
\title{\textsc{A new light on nets of C*-algebras
        and their representations}} 
\begin{document}
\maketitle

\begin{abstract}
The present paper deals with the question of representability of nets of $\rC^*$-algebras 
whose underlying poset, indexing the net, is not upward directed. A particular class of nets, 
called $\rC^*$-net bundles, is classified in terms of $\rC^*$-dynamical systems having as group  
the fundamental group of the poset. Any net of $\rC^*$-algebras 
embeds into a unique $\rC^*$-net bundle, the enveloping net bundle, which generalizes 
the notion of universal $\rC^*$-algebra given by Fredenhagen to nonsimply connected posets.
This allows a classification of nets; in particular, we call injective those nets having a 
faithful embedding into the enveloping net bundle. Injectivity
turns out to be equivalent to the existence of faithful representations. We further 
relate injectivity  to a generalized \v Cech cocycle of the net, and this allows 
us to give examples of nets exhausting the above classification. 
Using the results of this paper we shall show, in a forthcoming paper,  that any conformal net over $S^1$ is injective. 
\end{abstract}

\newpage 

\tableofcontents
\markboth{Contents}{Contents}

\newpage 


  \theoremstyle{plain}
  \newtheorem{definition}{Definition}[section]
  \newtheorem{theorem}[definition]{Theorem}
  \newtheorem{proposition}[definition]{Proposition}
  \newtheorem{corollary}[definition]{Corollary}
  \newtheorem{lemma}[definition]{Lemma}

  \theoremstyle{definition}
  \newtheorem{remark}[definition]{Remark}
    \newtheorem{example}[definition]{Example}

\theoremstyle{definition}
  \newtheorem{ass}{\underline{\textit{Assumption}}}[section]


\numberwithin{equation}{section}

\section{Introduction}

A net of $\rC^*$-algebras is a covariant functor 
from a poset, considered as a category, to the category of unital $\rC^*$-algebras having 
faithful ${^*}$-morphisms as arrows. Actually, this structure is not a net 
unless the poset is upward directed; it is, rather, a precosheaf of 
$\rC^*$-algebras, however we prefer to maintain the term 
net throughout this paper, according to the convention used in algebraic 
quantum field theory. The present paper is  addresses 
to the analysis of nets over nonupward directed posets, and, in particular,
the question of their representability on Hilbert spaces.\\  
\indent The basic idea  of  the algebraic approach to 
quantum fields over a spacetime 
is that the physical content of the theory is completely encoded in the relation linking 
observables measurable within a suitable region of the spacetime to that region 
(\cite{HK, Haa, Ara, Rob, BH, BrF}). 
Mathematically this is expressed by  
a net of $\rC^*$-algebras defined over a poset given as a suitable set of regions 
of the spacetime ordered under inclusion. 
Two physical inputs are imposed on this net: causality and covariance under
spacetime symmetries, if there are any.   This is  what is called  the \emph{observable net}.
Quantum physical systems, like for instance  elementary particles, are described  by certain  representations 
of the observable net on a Hilbert space. Some remarkable results 
in this direction are \cite{DHR1,DHR2,BF}.\\ 
\indent The set of regions indexing the observable net must be chosen to best fit the topological and the 
causal properties of the spacetime and the global symmetries.  
In  Minkowski space this is  the set  
of double cones, which turns out to be upward directed under inclusion; so the net embeds faithfully in the 
inductive limit $\rC^*$-algebra (the colimit). Symmetries on the net lift to the inductive limit, and 
(covariant) representations of the inductive limit yield 
(covariant) representations of the observable net.  
However, when one deals with theories over curved spacetimes, where nontrivial topologies are allowed, 
or over the circle $S^1$, 
the appropriate set of regions is not upward directed any more
\footnote{An upward directed poset is simply connected; 
the first homotopy group of the spacetime is isomorphic to that of the poset indexing the net,
see \cite{Ruz}.}, 
see \cite{BMT,GLRV,Ruz}.
So the question of the existence of representations arises. \\
\indent A similar problem arises in geometric group theory  (\cite{BHa,GHV}). 
Any simple complex of groups is a net of groups over its set of  
simplices ordered under opposite inclusion, 
and the realizability of the complex,
namely the existence of a faithful embedding into the colimit, is equivalent to the existence of 
a faithful representation of the net (the analogue of a Hilbert space representation, see below). 
Neverthless the powerful results obtained in this context seem to be very far from 
to be applicable to the nets arising in quantum field theory, mainly because 
the posets involved are finite and, in many applications, the groups are 
finitely generated. \\
\indent The question of representability in the context of quantum field theory
has been considered by Fredenhagen, who generalized the inductive limit $\rC^*$-algebra to nonupward directed posets 
introducing the universal $\rC^*$-algebra of a net, characterized by 
the property that representations of the net (those that we call, in the present paper, 
\emph{Hilbert space representations}) lift to representations of the universal $\rC^*$-algebra (\cite{Fre,GL}). 
Symmetries of the net lift to the universal $\rC^*$-algebra, which reduces to the inductive limit 
when the poset is upward directed. The advantage, apart that of dealing with 
a single algebra rather than a net,  is that some of the symmetries and representations 
of the observable net may be realized in the universal $\rC^*$-algebra, see for instance \cite{FRS,DFK,Hol}. \smallskip

Nevertheless we consider this approach unsatisfactory, for two main reasons. 
\emph{First}, the universal $\rC^*$-algebra may be trivial, but there is no 
result connecting the nontriviality of the universal $\rC^*$-algebra to
intrinsic properties of the net itself. \emph{Secondly}, the universal 
$\rC^*$-algebra is too restrictive, since it does not take  the fundamental group of the poset
into account\footnote{A geometrical invariant appearing in this context is the 2-homology 
of the poset. As shown in \cite{Hol},
central elements of the universal algebra of a net are associated 
with 2-cycles of the spacetime.}.  
Two recent papers,
\cite{CKL} and \cite{BR} (see also \cite{BFM}) illustrate the problems that arise.
In particular, in \cite{BR} quantum effects 
due to the topology of the spacetime have been studied.
Guided  by Roberts' cohomology \cite{Rob1,Rob}, the authors introduced a different notion  
of representation for a net of $\rC^*$-algebras, that we use here and call 
\emph{representation} in the sequel.   
These induce a representation of the fundamental group of the poset; generalize the above notion of Hilbert
space representation  in the sense that the two notions coincide when the poset is simply connected; 
describe, as representations of 
the observable net over a spacetime, charges affected by the topology of the spacetime. 
Nevertheless, these representations do not, in general,  admit
any extension to the universal $\rC^*$-algebra.\smallskip

The present paper answers  the above two questions and is organized
as follows. 
We begin by giving a combinatorial construction of nets of $\rC^*$-algebras 
over arbitrary posets (\S \ref{Bb}). 
Afterwards we focus on a particular 
class of nets, the  $\rC^*$-net bundles, giving a classification 
in terms of $\rC^*$-dynamical systems carrying an action of the fundamental group 
of the poset (\S \ref{Bc}). 
The importance of net bundles is  that any 
net embeds into a $\rC^*$-net bundle called \emph{the enveloping net bundle}, 
uniquely associated with the given net.  
This gives a first answer to the above questions, since 
it  takes into account the topology of the poset and reduces to the universal $\rC^*$-algebra 
when the poset is simply connected. We distinguish nets between {\em nondegenerate},
whenever the enveloping net bundle is nonvanishing,
and \emph{injective}, whenever the embedding is faithful (\S \ref{Bd}). 
Injectivity turns out to be equivalent to the existence of faithful representations (\S \ref{C}).
We give an intrinsic description of injective nets in terms of a generalized \v Cech cocycle. 
This allows us to find  examples of nets exhausting the above classification: 
degenerate nets, nondegenerate but noninjective nets and  injective nets having no Hilbert space representations
(so, the universal $\rC^*$-algebra is trivial but the embedding 
into the  enveloping net bundle is faithful, \S \ref{D}). \smallskip

Finally, we stress that this paper shall be followed by a second one (\cite{RVs1}),
in which the ideas of the present work shall be used to prove that any (covariant) 
net over the standard, nondirected subbase of $S^1$ has faithful (covariant) representations,
a scenario of interest in conformal field theory.
\smallskip

\section{Preliminaries}
\label{A}

We introduce basic notions of posets, some  motivated by 
algebraic quantum field theory, and some related algebraic structures. 
In particular we discuss the fundamental group of a poset in terms of a 
related simplicial set. 


\subsection{Posets} 
\label{A:a}

A \emph{poset} is a nonempty set $K$ with an order relation $\leq$, that is, 
$\leq$ is  a binary relation which is reflexive, antisymmetric and transitive. 
We shall denote the elements of $K$ by Latin letters $o,a$. We shall write 
$a<o$ to indicate that $a\leq o$ and $a\ne o$.
A poset $K$ is said to be \emph{upward directed} if for any pair  $o_1,o_2\in K$ 
there is $o\in K$ with $o_1,o_2\leq o$. 
The \emph{dual} poset of $K$ is the set $K^\circ$ having the same elements
as $K$ and order relation $\leq^\circ$ defined by 
$ a\leq^\circ o$ if, and only if,  $o\leq a$. 
We say that $K$ is \emph{downward directed} if $K^\circ$ is upward directed.
A subset $C \subseteq K$ is said to be \emph{contained} in $a \in K$
whenever $o \leq a$ for all $o \in C$ and in this case we write $C \subseteq a$.\\
\indent A \emph{morphism} from $K$ to a poset $P$ is an order preserving map $\rf:K\to P$: 
$\rf(o)\leq \rf(a)$ whenever 
$o\leq a$, and we say that it is an isomorphism if it is injective and surjective. 
We shall denote the set of automorphisms of a poset $K$ by $\mathrm{Aut}(K)$. 
A group $G$ is a \emph{symmetry group} for $K$ if there is an order preserving  left  
action $G\times K\ni (g,o)\to go \in K$, that is, if there is an injective group morphism 
from $G$ to $\mathrm{Aut}(K)$.\\
\indent The next definition is a key notion for posets used in algebraic quantum field theory. 
A \emph{causal disjointness relation} for a poset 
$K$ is symmetric binary relation 
$\perp$ such that 
\[
 a\perp o \ \ {\mathrm{and}} \ \ \tilde o\leq o \ \ \Rightarrow \ \ a\perp \tilde o \ .  
\]
If $K$ is endowed with  a causal disjointness relation $\perp$ and $G$ is a symmetry group 
for $K$, \emph{we always assume that $G$ preserves $\perp$}; 
this amounts to saying that 
\[
 a\perp o \ \ \ \iff \ \ \ ga \perp go \ \qquad , 
\]
for any $g\in G$.\\ 
\indent In algebraic quantum field theory, the main object of study is the observable net: 
an inclusion preserving mapping from a set of regions $K$ of a given spacetime manifold 
to the set of $\rC^*$-algebras \cite{Haa}.  So the poset structure of the set $K$, ordered under  inclusion, enters the theory. In general this set of regions $K$ is a base of neighbourhoods for the topology of the spacetime manifold, 
consisting of open, connected and simply connected subsets of the spacetime. If the spacetime has a 
global symmetry group, then one considers only bases of neighbourhoods
stable under its action. \\
\indent In Minkowski space $\mathbb{M}^4$, $K$ is the set of double cones, the symmetry group is the Poincar\'e group, and the causal disjointness relation is  spacelike separation.
This poset is upward directed under inclusion. For arbitrary 
4-dimensional globally hyperbolic spacetimes $\cM$, $K$ is the set 
of diamonds \cite{Ruz,BR}, which is stable under isometries, and the causal disjointness
relation is induced by the causal structure of the spacetime. 
This poset is not upward directed when $\cM$ is not simply connected or when 
$\cM$ has compact Cauchy surfaces. 
For theories on the circle $S^1$, $K$ is the set of connected open intervals of $S^1$ having a 
proper closure; the symmetry group is $\mathrm{Diff}(S^1)$ or the M\"obius subgroup; 
the usual notion of disjointness between sets is the causal disjointness relation. 
This poset is not upward directed either.  
%
%
\subsection{A simplicial set for  posets}
\label{A:b}
We introduce a simplicial set associated with a poset and, 
 in particular, discuss the notion of the fundamental group of a poset in terms of this 
simplicial set. The standard symbols $\partial_i$ and $\si_i$ are used to denote the face and degeneracy maps. The symbols $\partial_{ij}$ and $\si_{ij}$ denote, respectively, 
the compositions $\partial_{i}\partial_j$ and $\si_i\si_j$.  
References for this section are \cite{Rob,RRV}. \bigskip

We consider the simplicial set $\Si_*(K)$ of \emph{singular simplices}
associated with a poset $K$, introduced by Roberts in \cite{Rob}.  
A brief description of the set $\Si_n(K)$ of $n$-simplices
is the following.  A $0$-simplex is just an element of $K$. Inductively, for $n\geq 1$, 
and $n$-simplex $x$  is formed by $n+1$, $(n-1)$-simplices
$\partial_0x,\ldots, \partial_nx$ and by an element of the poset 
$|x|$, called the \emph{support} of $x$, such that 
$|\partial_ix|\leq |x|$ for $i=0,\ldots,n$. We shall denote 
0-simplices either by $a$ or by $o$, 1-simplices by $b$, and
2-simplices by $c$. Given a 1-simplex $b$ the \emph{opposite} 
$\overline{b}$ is the 1-simplex having the same support as $b$ 
and such that $\partial_0\overline{b}=\partial_1b$,
$\partial_1\overline{b}=\partial_0b$.\\
\indent Composing 1-simplices one gets paths. 
A \emph{path} $p$ is an expression  of the form  $b_n*\cdots *b_1$
where $b_i$ are 1-simplices satisfying the relations $\partial_0b_{i-1}= \partial_1b_{i}$ 
for $i=2,\ldots,n$. We define the 0-simplices  
$\partial_1p:=\partial_1b_1$ and
$\partial_0p:=\partial_0b_n$ and call them, respectively, 
the starting and the ending point of $p$. The \emph{support} of a path $p$ is 
the subset $|p|$ of $K$ whose elements are the  the supports of the 1-simplices 
which compose the path. 
By $p:a\to \tilde a$ we mean a path starting from $a$
and ending at $\tilde a$. A path $p:o\to o$ is called a \emph{loop} 
over $o$. The \emph{opposite} of $p$ 
is the path $\overline{p}:\tilde a\to a$ defined by 
$\overline{p}:=\overline{b}_1*\cdots *\overline{b}_n$. 
If $q$ is a path from $\tilde a$ to $\hat
a$, then we can define, in an obvious way, the composition 
$q*p:a\to \hat a$. \\
\indent Any poset morphism $\rf:K\to P$ induces a morphism between the corresponding 
simplicial sets. 
Given $o\in \Si_0(K)$, we let $\rf(o)\in \Si_0(P)$ be the image of $o$ by $\rf$.
Inductively, for $n\geq 1$, given an $n$-simplex $x$ of $K$ we define $\rf(x)$ 
as the $n$-simplex of $P$ withs  faces 
$\partial_i\rf(x):= \rf(\partial_ix)$ for $i=0,1,\ldots n$, and support  
$|\rf(x)|:=\rf(|x|)$. This, clearly, induces a mapping between 
the corresponding set of paths: $\rf(p):= \rf(b_n)*\cdots*\rf(b_2)*\rf(b_1)$ 
is a path of $P$ for any path $p$ of $K$ of the form $p=b_n*\cdots *b_2*b_1$. \\
\indent A poset $K$ is said to be connected whenever for any pair $o,a$ of $0$-simplices 
there is a path $p:o\to a$. In the present paper \emph{we shall always consider pathwise 
connected posets}. In a pathwise connected poset we can define \emph{path frames}: 
fixed a 0-simplex $o$, the pole, a path frame $P_o$, with respect to $o$, is a choice 
for any 0-simplex $a$ of a  path $p_{(a,o)}:o\to a$ such that $p_{(o,o)}$ is homotopic (see below) 
to $\iota_o$ the trivial loop over $o$ which i.e. the  degenerate 1-simplex $\si_0o$. 
We shall always denote the opposite $\overline{p_{(a,o)}}$ of the path 
$p_{(a,o)}$ in $P_o$ by $p_{(o,a)}$.\\ 
\indent A deformation of a path $p$ is a path obtained either by 
replacing two subsequent 1-simplices  $\partial_0c*\partial_2c$ of $p$
by $\partial_1c$, or by replacing a 1-simplex $\partial_1c$ of $p$ by 
$\partial_0c*\partial_2c$, where $c \in \Si_2(K)$.   
Two paths $p$ and $q$ are \emph{homotopy equivalent}, written  $\sim$,  if 
one can be obtained from the other by a finite sequence of  deformations. 
We shall denote the homotopy class of a path $p$ by $[p]$. 
We then define the \emph{first homotopy group} of $K$, with base point $o\in\Si_0(K)$, as 
\[
 \pi_1^o(K):=\{p:o\to o\} /\sim \ \  , \ \ \ [p]\cdot [q]:= [p*q] \ . 
\] 
Note that, if $\rf:K\to P$ is a poset morphism, then 
the induced  mappings between the set of paths (see above) preserves homotopy equivalence. 
So, by setting  
\[
\rf_*([p]):= [\rf(p)] \ , \qquad [p]\in\pi^o_1(K) \ , 
\]
one has that $\rf_*:\pi^o_1(K)\to \pi^{\rf(o)}_1(P)$ is a group morphism.
Now, since we consider only pathwise connected posets, 
the first homotopy group does not 
depend, up to isomorphism, on the choice of the base point; 
this isomorphism class, written $\pi_1(K)$, is the \emph{fundamental group} of $K$. 
We shall say that $K$ is simply connected whenever $\pi_1(K)$ is trivial.\\
\indent It turns out that a poset $K$ is simply connected if it  is upward directed, because 
there is a contracting homotopy \cite{Rob}, but also if it is  downward directed 
because the fundamental group of a poset  and that 
of its dual are equivalent \cite{RRV}.  A useful result is the 
following (see \cite{Ruz}): when $K$ is a base of neighbourhoods of a space $X$ consisting of arcwise 
and simply connected subsets, $\pi_1(K)$ is isomorphic to the 
homotopy group $\pi_1(X)$. This has important consequences for the present
paper.\\ 
\indent In the following we shall also use  the {\em nerve} $N_*(K)$ of $K$.  
Considering the poset $K$ as a category (taking the elements of $K$ as objects and 
the inclusions as arrows), a 0-simplex of the nerve is an object of this category;
a 1-simplex is an arrow; an $n$-simplex  is a composition of $n$ arrows. 
The nerve  can be realized as a subsimplicial set   of $\Si_*(K)$.
Clearly  $0$-simplices of the nerve are nothing but that $0$-simplices of $\Si_0(K)$.  
For $n\geq 1$, the elements of $N_n(K)$ are those elements $x$ of $\Si_n(K)$ 
whose vertices $x^i:= \partial_{012\cdots(i-1)  (i+1)\cdots (n-1)n}x$, for $i=0,1,\ldots, n$, satisfy 
the relation
\[
x^0\leq  x^1\leq x^2\leq\cdots \leq x^n=|x| \ . 
\]
In the following  we shall use the following notation: 
by $(ao)\in N_1(K)$  we shall mean the  1-simplex of the nerve with $\partial_1(ao)=o$ and $\partial_0(ao)=a$; clearly  $a\geq o$ and $|(ao)|=a$.\\
\indent  Finally, we  introduce a notion of continuity for symmetries of a poset, 
which is mainly used for  posets arising as a base for the topology of a space
as those introduced in \S \ref{A:a}.  
Some preliminary observations are in order.  Let $G$ be a symmetry group of the poset $K$. Extend the action of the group from the poset $K$ to the simplicial set $\Si_*(K)$ and hence to paths, as done above for morphisms of posets. It is clear, from the definition of symmetry of a poset, that $p\sim q$ if, and only if, $gp\sim gq$. So $g_*:\pi^o_1(K)\to \pi^{go}_1(K)$ is a group isomorphism. 
Now, a \emph{continuous symmetry group} is a topological symmetry group 
$G$ of  $K$ such that, for any path $p$ and
$a_0,a_1\in K$ with $\partial_0p < a_0$, $\partial_1p < a_1$, 
there is a neighbourhood $U_e$ of the identity $e \in G$ 
such that $g\partial_0p\leq  a_0$, $g\partial_1p\leq a_1$, and 
\begin{equation}
\label{Ab:1}
 (a,g\partial_0p)*gp*\overline{(o,g\partial_1p)} \sim 
 (a,\partial_0p)*p*\overline{(o,\partial_1p)} \, 
\end{equation}
for any $g\in U_e$. In this equation $(o,g\partial_1p)$ denotes the 1-simplex of the nerve 
having $g\partial_1p$ has 1-face and $o$ as 0-face, that is, $\partial_1(o,g\partial_1p)=g\partial_1p$
and $\partial_0(o,g\partial_1p) =o$, and $\overline{(o,g\partial_1p)}$ denotes the 
opposite  of the 1-simplex  $(o,g\partial_1p)$. 
%
The meaning of (\ref{Ab:1}) is that, in the limit $g \to e$, the path $gp$ becomes homotopic 
to $p$, up to rescaling the starting and the ending points.
%
%
Examples of posets 
having a continuous symmetry group are those described in \S \ref{A:a}, arising as a base for the topology 
of a $G$-space.
%

%
%
%
%
%

\section{Abstract nets of $\rC^*$-algebras}
\label{B}
We develop the abstract theory of nets of $\rC^*$-algebras over posets, focusing 
on those aspects of the theory involving the question 
of representability of nets. We define some basic notions concerning nets of $\rC^*$-algebras  
and give several examples  motivating these definitions.  
Apart from the examples coming from the algebraic quantum field theory, we give new 
examples of nets of $\rC^*$-algebras over any poset.\\
\indent The first important result concerns a particular class of nets, those that we call $\rC^*$-net bundles, which are classified in terms of $\rC^*$-dynamical systems having as group the first homotopy group of the poset. 
The importance of this result relies on two related facts. First, as we shall see in 
\S \ref{C}, this result implies that  any $\rC^*$-net bundle can be faithfully represented.  
Secondly, we shall prove that any net of $\rC^*$-algebras over a poset embeds 
in a unique $\rC^*$-net bundle: the enveloping  net bundle.  
So, the existence of faithful representations turns out to be equivalent  
to the  faithfulness of the embedding of the net into the enveloping net bundle. 
Nets satisfying the latter property are called injective nets.


\subsection{Basic definitions}
\label{Ba}

A \emph{net} of $\rC^*$-algebras $(\cA,\jmath)_K$ is defined by 
a poset $K$, a correspondence $\cA : o \to \cA_o$ associating  
a unital $\rC^*$-algebra $\cA_o$ to any $o \in K$, \emph{the fibre over $o$}, and 
a family $\jmath_{oa}:\cA_a\to\cA_o$, with $a\leq o$, 
of unital faithful $^*$-morphisms, the \emph{inclusion maps}, satisfying the 
\emph{net relations} 
\[
{\jmath}_{oa} \circ  {\jmath}_{ae} = {\jmath}_{oe} \ ,  \qquad e \leq a \leq o \ . 
\]
Whenever the inclusion maps are all $^*$-isomorphisms
we say that $(\cA,\jmath)_K$ is a \emph{$\rC^*$-net bundle}.
If $P \subset K$, then restricting $\cA$ and $\jmath$ to elements of $P$
yields a net called the \emph{restriction of $(\cA,\jmath)_K$ to $P$}, that we denote by $(\cA,\jmath)_P$.
\begin{remark}
 
Some observations are in order.\smallskip  

\noindent 1. According to the above definition, a net of $\rC^*$-algebras is
a pre-cosheaf of $\rC^*$-algebras.  We are adopting the practice  in 
algebraic quantum field theory of calling these objects nets of $\rC^*$-algebras even if it properly 
only applies when  the poset is  upward directed. \smallskip
 
\noindent 2. The term \emph{net bundle} derives from the fact that, as indicated 
in \cite{RRV}, these objects  are fibre bundles over posets  where  geometrical concepts like 
connections and their curvatures can be introduced and analyzed. 
Moreover, when $K$ is a good base for the topology of a space $X$, the category of
net bundles is, in essence, a (non-full) subcategory of the category of bundles on $X$ 
in the usual sense \cite{RRV}.\smallskip

\noindent 3. Note that, when we have a $\rC^*$-net bundle $(\cA,\jmath)_K$,   
each ${\jmath}_{oa}$, $a \leq o$, is invertible, 
and it makes sense to consider the inverses ${{\jmath}_{oa}}^{-1}$.  
To be concise we will write ${\jmath}_{ao} := {{\jmath}_{oa}}^{-1}$.
\end{remark}

A \emph{morphism} $(\phi,\rf):(\cA,\jmath)_K\to (\cB,\imath)_P$ 
of nets of $\rC^*$-algebras is a pair $(\phi,\rf)$, 
where $\rf:K\to P$ is a morphism of posets
%
%
and $\phi$ is a family 
$\phi_o : \cA_o \to \cB_{\rf(o)}$, $o\in K$,  
of $^*$-morphisms fulfilling the relation 
\[
\phi_o\circ \jmath_{oa}= \imath_{\rf(o)\rf(a)}\circ \phi_a \ , \qquad a\leq o \ . 
\]
We say that $(\phi,\rf)$ is \emph{unital} morphism whenever all $\phi_o$ are unital, and 
\emph{faithful on the fibres} whenever $\phi_o$ is faithful for any $o$. Moreover, 
we say that $(\phi,\rf)$ is a  \emph{monomorphism} if $\rf$ is injective and 
$\phi_o$ is faithful for any $o$; it is an \emph{isomorphism} if  
$\rf$ and $\phi_o$, for any $o$,  are isomorphisms.  
There is an obvious composition rule between morphisms: given  $(\psi,\rh):(\cB,\imath)_P\to(\cC,y)_S$ we define  
\[
(\psi,\rh)\circ  (\phi,\rf):= (\psi\circ \phi,\rh\circ \rf) \ : \ 
(\cA,\jmath)_K\to (\cC,y)_S \ ,
\]
leading, in an obvious way, to the category of nets of $\rC^*$-algebras. 
We shall mainly deal with nets over a fixed poset $K$ where  we shall denote 
morphisms of the form $(\phi,\mathrm{id}_K)$ by $\phi$
($\mathrm{id}_K$ is the identity morphism of $K$).\smallskip

The \emph{constant net bundle} with fibre the $\rC^*$-algebra $\rA$ is defined as
the constant assignment 
$\cA^{t}_o := \rA$, $o \in K$,
with inclusion maps
$\jmath^{t}_{\tilde oo} := \mathrm{id}_\rA$ for all $o \leq \tilde o$;
we say that the net is \emph{trivial} if it is isomorphic to a constant net bundle. 
%
Applying the reasoning of \cite{RR,RRV} it can be proved that when $K$ is simply connected
every $\rC^*$-net bundle $(\cA,\jmath)_K$ is trivial; in \S \ref{Bc} we shall give a
proof using dynamical systems. Finally, we say that a net \emph{vanishes} 
if it is isomorphic to the \emph{null net bundle}, that is,  
the $\rC^*$-net bundle having fibre $\{0\}$.
\begin{remark}
It is easily seen from the above definitions that a net  
is a functor from a poset, considered as a category, 
to the category of unital $\rC^*$-algebras 
having  unital faithful ${^*}$-morphisms as  arrows. 
Other types of nets can be obtained by changing the target category,
as follows. \\ 

\indent A \emph{net of Hilbert spaces} $(\cH,U)_K$ is given by the correspondence
assigning Hilbert spaces $\cH_o$, $o \in K$,
and a family of isometries $U_{ao} : \cH_o \to \cH_{a}$, $o \leq a$,
fulfilling 
$U_{eo} = U_{ea} \, U_{ao}$, $o \leq a \leq e$.
When each $U_{ao}$ is unitary we say that $(\cH,U)_K$ is a \emph{Hilbert 
net bundle}.\\ 
\indent A \emph{net of locally compact groups} $(\cG,\jmath)_K$ is defined
assigning locally compact groups $\cG_o$, $o \in K$, and continuous 
group monomorphisms $\jmath_{ao}:\cG_o\to\cG_a$, $o\leq a$,  fulfilling
$\jmath_{eo} = \jmath_{ea} \circ \jmath_{ao}$, $o \leq a \leq e$.
We say that $(\cG,\jmath)_K$ is a group net bundle whenever each $\jmath_{ao}$
is an isomorphism. 
\end{remark}

Let $G$ be symmetry group of $K$ in the sense of \S \ref{A:a}. 
A net of $\rC^*$-algebras $(\cA,\jmath)_K$ is \emph{$G$-covariant} if for 
any $g\in G$ there is  a family $\alpha_o^g:\cA_o\to\cA_{go}$, $o\in K$,  
of $^*$-isomorphisms such that 
\begin{equation}
\alpha_{go}^h \circ \alpha_o^g = \alpha_o^{hg} \ , \qquad g,h\in G \ , 
\end{equation}
and 
\begin{equation}
\alpha_o^g \circ {\jmath}_{oa} = {\jmath}_{go\,ga}\circ \alpha_a^g  
\ , \qquad 
a\leq o \ .  
\end{equation}
Let $(\cB,\imath,\beta)$ be a $G$-covariant net. A morphism 
$\phi : (\cA,\jmath)_K \to (\cB,\imath)_K$
is said to be $G$-covariant whenever 
$\beta^g_o \circ \phi_o = \phi_{go} \circ \alpha^g_o$,
$\forall o \in K$, $g \in G$. When $G$ is a continuous symmetry group of $K$  (see (\ref{Ab:1})) 
we assume that the action $\alpha$ on a covariant net $(\cA,\jmath,\alpha)_K$ 
is \emph{continuous}. This amounts to saying that if  $\{g_\lambda\}_{\Lambda}$ is a net in $G$ converging to the identity of the group,
then for any  $o\in K$,  and for any $a\in K$ with $a > o$,  there exists  $\lambda_a\in \Lambda$ 
such that $g_{\lambda}o\leq a$ for any $\lambda\in\Lambda$ with 
$\lambda\geq \lambda_a$ and  
\begin{equation}
\norm{\jmath_{a\, g_\lambda o}\circ \alpha^{g_\lambda}_o(A)-\jmath_{ao}(A)} \to 0 \ ,\qquad \forall A\in\cA_o \ . 
\end{equation}
We stress that this notion of continuity is intended for  posets arising as the base for the topology 
of a topological space acted upon, continuously, by a group (see \S \ref{A:a}). 
In this cases, one can easily see that when the poset is upward directed this notion of continuity reduces
to the usual notion in   algebraic quantum field theory (see \cite{Haa}).\\
\indent Finally, when  $K$ is endowed with a causal disjointness 
relation $\perp$, we say that a net of $\rC^*$-algebras $(\cA,\jmath)_K$ is  \emph{causal}
if, for any $o_1,o_2\subseteq o$ with $o_1\perp o_2$, there holds 
\begin{equation}
 [{\jmath}_{oo_1}(A_1), {\jmath}_{oo_2}(A_2)]=0 \ , 
\end{equation}
for any $A_1\in\cA_{o_1}$ and  $A_2\in\cA_{o_2}$, where $[\cdot,\cdot]$ denotes the commutator.\\
%


\subsection{Examples} 
\label{Bb}
Many of the  examples 
of nets of $\rC^*$-algebras  arise  as models of quantum fields over a spacetime, mainly 
\emph{via} the Weyl quantization of the phase space of a classical field.  
For any spacetime mentioned in \S \ref{A:a} (the Minkowski space, the circle $S^1$, 
or an arbitrary globally hyperbolic spacetime) there are causal nets, 
over a suited poset (see \S \ref{A:a}), which are covariant with respect to the symmetries of the spacetime.
For these examples we refer the reader to \cite{Haa,Ara,BrR,Dim,Ve,BGP,BMT,FRS}. 
We also quote  \cite{BFV} for generally locally covariant nets over 
globally hyperbolic spacetimes,  and \cite{BrR,AM} for examples 
of nets over the lattice $\mathbb{Z}^n$. \\
\indent In all the mentioned examples  the representability problem  does not arise: 
the nets are represented on the  \emph{physical} Hilbert space of the model\footnote{For instance, 
models obtained by Weyl quantization are, in general, represented on a Fock space.
More in general, in these models the net  faithfully embeds into the $\rC^*$-algebra 
associated with whole phase space. Faithful representations of this algebra induce 
faithful representations of the net.}.  
However, in order to study model independent aspects of a theory one is led 
to consider abstract nets of $\rC^*$-algebras. Apart from the cases, like  
the Minkowski space, where the poset indexing the net is upward directed (see \S \ref{A:a}),
there is no result about the existence of representations for these nets; 
in these cases the existence of a representation is 
a working assumption.  This motivates our investigation.\\ 
\indent Important examples of nets  outside the context of 
quantum field theory comes from geometric group theory: as said in the introduction, any 
simple complex of groups is  a net of groups (\cite{BHa,GHV,Hae,Sta}). Examples are also 
the systems of $\rC^*$-algebras and  the systems of groups  
underlying the notions of (generalized) amalgamated 
free product \cite{Neu1,Neu2,Bla,Bil}.\\ 
\indent In the next subsections we provide new  examples of nets. 
In particular, we give a combinatorial construction of nets  of $\rC^*$-algebras 
over arbitrary posets.  These nets turn out to be  covariant when the posets have a symmetry group.  Aspects related to the causal structure will be discussed in a future work \cite{CRV}.


\subsubsection{Nets of groups of loops}
\label{Bba}

We introduce a way of deforming a path of a poset 
which turns out to be weaker than that underlying the homotopy equivalence relation
This new deformation allows us to construct examples of nets of discrete groups 
(in the present section)  and of $\rC^*$-algebras (later) over any poset. \bigskip

Given two paths $p$,$q$ with the same endpoints. 
We say that $q$ is a $w$-\emph{deformation} of $p$, and conversely, if
\begin{itemize}
 \item[(1)] either $q$ is obtained by inserting into $p$ a degenerate 1-simplex;           
 \item[(2)] or by replacing two consecutive 1-simplices
 $\partial_0c*\partial_2c$ of the path $p$ by $\partial_1c$ where 
   $c\in N_2(K)$ (a 2-simplex of the nerve);
\item[(3)] or $q$ can be obtained by replacing two consecutive 1-simplices
 $\overline{b}* b$  of the path  $p$ by $\si_0\partial_1b$. 
\end{itemize}
Two paths with the same endpoints are \emph{$w$-equivalent} if one can be obtained from 
the other by a finite sequence of $w$-deformations.\bigskip

This is an equivalence relation weaker than homotopy since  
any pair of  w-equivalent paths are also homotopy equivalent. 
The following example points out the difference between these two 
relations. Consider a 1-simplex $b\in\Si_1(K)$, and the associated path
\[
p_b:= (\overline{|b|\partial_0b})*(|b|\partial_1b)  \ , 
\]
where $(|b|\partial_1b)$ is the 1-simplex of the nerve of $K$ (see \S \ref{A:b}) 
having $1$-face  $\partial_1b$
and $0$-face $|b|$; while $(\overline{|b|\partial_0b})$ is the opposite 
in $\Si_1(K)$ of the 1-simplex of the nerve  $(|b|\partial_0b)$. Then 
$b$ is homotopic to $p_b$: the relations 
\[
\partial_0c_b:= (\overline{|b|\partial_0b}) \ , \ \ 
\partial_2c_b:=(|b|\partial_1b) \ , \ \ \partial_1c_b:= b \ , \ \ 
|c_b|:=|b| \ , 
\]
define a 2-simplex $c_b$ of $\Si_2(K)$. However 
\[
b\sim_w p_b \ \ \iff \ \ b\in N_1(K) \ .  
\]
In fact if $b$ does not belong to the nerve of $K$, then $c_b$ does not belong 
to the nerve of $K$, and this  implies that $b\not\sim_w p_b$. Conversely if  
$b$ belongs to the nerve, then  $\partial_0b=|b|$.  So $(\overline{|b|\partial_0b})=\si_0\partial_0b$ (the degenerate 1-simplex associated with the 0-face 
of $b$) and $b= (|b|\partial_1b)$. Therefore  $c_b\in N_2(K)$ and 
$b\sim_w p$ by property (2) of the above definition.\\ 
\indent Now it is easily seen that  $w$-equivalence is compatible with the composition of paths 
and with the operation of taking the opposite of a path. This allows us 
to associate two groups with any $o$ of $K$. 
The first group is defined as the quotient set  of the set of loops over $o$ 
with respect to the $w$-equivalence: 
\begin{equation}
  \Lambda_o:=\{p:o\to o \ \}/\sim_w  \ .   
\end{equation}
The product is defined as 
$[p]_w\cdot [q]_w:=[p*q]_w$. We call this group 
the \emph{$w$-group of loops} over $o$. The second group is the subset of  $\Lambda_o$
defined by
\begin{equation}
  \Lambda^l_o:=\{[p]_w\in \Lambda_1(o) \ | \  \exists q\in [p]_w \ \mbox{ such that } 
  |q|\subseteq o\} \ , 
\end{equation}
with the same product as $\Lambda_o$. In words $\Lambda^l_o$ is the subset 
of those elements $[p]_w$ of  $\Lambda_o$ for which there is at least 
one path in the equivalence class of $[p]_w$ whose support is contained in $o$.
This is a subgroup  because 
composition of paths whose support is contained in $o$ leads to a path supported in $o$. 
We call  $\Lambda^l_o$ the \emph{w-group of loops supported in $o$}.\\ 
\indent The next step is to prove that these groups form nets of discrete groups. 
Given an inclusion $o\leq a$, define 
\begin{equation}
\lambda_{ao}([p]_w):= [(ao)*p*(\overline{ao})]_w \ , \qquad [p]_w\in \Lambda_1(o) \ .  
\end{equation}
Here $(ao)$, as above,  is the 1-simplex of the nerve associated with the inclusion $o\leq a$;  
instead $(\overline{ao})$ is the opposite, in $\Si_1(K)$, of $(ao)$ (see the preliminaries). \\ 
\indent It is clear that $\lambda_{ao}:\Lambda_o\to\Lambda_a$. Moreover, it is easily seen from the definition  that  
$\lambda_{ao}:\Lambda^l_o\to\Lambda^l_a$. Now, for any $[q]_w,[p]_w\in\Lambda_o$ we have 
\begin{align*}
\lambda_{ao}([p]_w)\,\lambda_{ao}([q]_w )& = [(ao)*p*(\overline{ao})]_w\cdot [(ao)*q*(\overline{ao})]_w \\
& = [(ao)*p*(\overline{ao})*(ao)*q*(\overline{ao})]_w \\
&  =  [(ao)*p*q*(\overline{ao})]_w =  \lambda_{ao}([p*q]_w)\ , 
\end{align*}
because of property $(3)$ of the definition of $w$-deformation. Moreover for any 
inclusion $e\leq o\leq a$
\[
\lambda_{ao}(\lambda_{oe}([p]_w)) = [(ao)*(oe)*p*(\overline{oe})*(\overline{ao})]_w
= [(ae)*p*(\overline{ae})]_w = \lambda_{ae}([p]_w) \ , 
\]
for any $[p]_w\in\Lambda_e$,  
because of property $(2)$ of the definition of $w$-deformation. Finally    
if $[p]_w\in\Lambda_a$, then $[(\overline{ao})*p*(ao)]_w\in\Lambda_o$ for $o\leq a$, and  
$\lambda_{ao}([(\overline{ao})*p*(ao)]_w) = [p]_w$. This proves that 
$\lambda_{ao}:\Lambda_o\to\Lambda_a$ is group isomorphism making $(\Lambda,\lambda)_K$ 
a net bundle of discrete groups and $(\Lambda^l,\lambda)_K$ a net of discrete groups.
\begin{remark}
\label{Bba:1}
In general the triple $(\Lambda^l,\lambda)_K$ is  \emph{is not a net bundle}. For,
it is enough to consider inclusions  $o,\tilde o < a$ such that $o$ and $\tilde o$ are not
related. In this case one can easily construct elements of $\Lambda^l_a$ not belonging to 
the image of $\Lambda^l_o$ by $\lambda_{ao}$.
\end{remark}
Assume that $K$ has a symmetry group $G$. Extend the action of the symmetry group from the poset
$K$ to the simplicial set $\Si_*(K)$, and hence to paths, as done 
in \S \ref{A:a} for morphisms of posets. Note that 
$p\sim_{w}q$ if, and only if, $gp\sim_w gq$ for any $g\in G$. This is quite obvious 
from  how $g$ is defined on paths. Therefore, 
\begin{equation}
\label{Bba:2}
g_*([p]_w):=[gp]_w \ , \qquad [p]_w\in\Lambda_o \ ,  
\end{equation}
is well defined, and  
$g_*:\Lambda_o \to \Lambda_{go}$ and
$g_*:\Lambda^l_o \to \Lambda^l_{go}$ are group isomorphisms. Moreover, given 
$o\leq a$ we have 
\[
g_*(\lambda_{ao}([p]_w)) = g_*([(ao)*p*(\overline{ao})]_w) =
[g(ao)*gp*\overline{g(oa)}]_w = \lambda_{g(ao)}(g_*([p]_w)) \ ,
\]
for any $[p]_w\in\Lambda_o$.  Summing up, we have the following result. 
\begin{proposition}
\label{Bba:3}
Let $K$ be a poset. 
\begin{itemize}
\item[(i)] Then $(\Lambda,\lambda)_K$ and $(\Lambda^l,\lambda)_K$ are, respectively, 
a net bundle and a net of discrete groups with a monomorphism 
$i:(\Lambda^l,\lambda)_K\to (\Lambda,\lambda)_K$ 
defined by the inclusions $i_o:\Lambda^l_o\to\Lambda_o$. 
\item[(ii)] If  $K$ has  a symmetry group $G$, then 
$i:(\Lambda^l,\lambda)_K\to (\Lambda,\lambda)_K$ is a monomorphism of $G$-covariant nets. 
\end{itemize}
\end{proposition}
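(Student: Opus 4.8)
The plan is to assemble Proposition~\ref{Bba:3} from the facts already established in the running text immediately preceding it, so the proof is essentially a matter of collecting and citing those verifications rather than proving anything new. For part~(i), I would first recall that the product $[p]_w\cdot[q]_w:=[p*q]_w$ is well defined because $w$-equivalence is compatible with composition of paths and with taking opposites, so both $\Lambda_o$ and $\Lambda^l_o$ are genuine groups, and $\Lambda^l_o$ is a subgroup of $\Lambda_o$ since the support of a composite of paths supported in $o$ is again contained in $o$. Next I would invoke the three displayed computations already carried out: the multiplicativity of $\lambda_{ao}$ (using property~(3) of the $w$-deformation), the net relation $\lambda_{ao}\circ\lambda_{oe}=\lambda_{ae}$ (using property~(2)), and the surjectivity/invertibility identity $\lambda_{ao}([(\overline{ao})*p*(ao)]_w)=[p]_w$. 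Together these show each $\lambda_{ao}:\Lambda_o\to\Lambda_a$ is a group isomorphism, so $(\Lambda,\lambda)_K$ is a net bundle of discrete groups, while $(\Lambda^l,\lambda)_K$ is a net of discrete groups (the maps $\lambda_{ao}:\Lambda^l_o\to\Lambda^l_a$ are injective group morphisms but need not be surjective, per Remark~\ref{Bba:1}).

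For the monomorphism $i:(\Lambda^l,\lambda)_K\to(\Lambda,\lambda)_K$, I would take $i_o:\Lambda^l_o\hookrightarrow\Lambda_o$ to be the set-theoretic inclusion, which is a faithful (injective) group morphism by definition of the subgroup. The compatibility square $i_o\circ\lambda_{ao}=\lambda_{ao}\circ i_a$ (over $a\leq o$, with the inclusion maps on each side) is immediate because both nets use the \emph{same} formula $\lambda_{ao}([p]_w)=[(ao)*p*(\overline{ao})]_w$; the inclusion simply forgets that a representative is supported in $o$. Since $\mathrm{id}_K$ is the underlying poset morphism and each $i_o$ is faithful, $i$ is a monomorphism in the sense of the definition in \S\ref{Ba}.

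For part~(ii), assuming $K$ carries a symmetry group $G$, I would use the already-verified facts that $g_*([p]_w):=[gp]_w$ is well defined (because $p\sim_w q\iff gp\sim_w gq$) and that $g_*:\Lambda_o\to\Lambda_{go}$ and $g_*:\Lambda^l_o\to\Lambda^l_{go}$ are group isomorphisms, together with the covariance identity $g_*\circ\lambda_{ao}=\lambda_{g(ao)}\circ g_*$ displayed just above the proposition. These exhibit $(\Lambda,\lambda)_K$ and $(\Lambda^l,\lambda)_K$ as $G$-covariant nets. It then remains only to check that $i$ intertwines the two $G$-actions, i.e. $g_*\circ i_o=i_{go}\circ g_*$; but this is trivial, since $g_*$ acts by the same rule $[p]_w\mapsto[gp]_w$ on both $\Lambda^l$ and $\Lambda$ and $i_o$ is the inclusion, so the two composites agree. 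Hence $i$ is a monomorphism of $G$-covariant nets.

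I do not anticipate a genuine obstacle here, since every algebraic identity needed has been established in the preceding discussion; the only point requiring mild care is the well-definedness of $g_*$ on $w$-classes and the fact that $g_*$ restricts correctly to $\Lambda^l$ (i.e. that $g$ maps a representative supported in $o$ to one supported in $go$, which follows directly from how the $G$-action is extended to $\Si_*(K)$). The remaining burden is purely expository: stating clearly that $i_o$ is the inclusion and that the intertwining relations hold because both nets are built from identical formulas.
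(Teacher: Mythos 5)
Your proposal is correct and follows essentially the same route as the paper: the paper's own proof consists precisely of the final two checks you describe (that the inclusion $i_o([p]_w)=[p]_w$ is a monomorphism and that $g_*\circ i_o = i_{go}\circ g_*$), with all the structural facts about $\lambda_{ao}$ and $g_*$ cited from the computations in the text immediately preceding the proposition, exactly as you do. The only slip is notational, in the compatibility square the indices should read $i_a\circ\lambda_{ao}=\lambda_{ao}\circ i_o$ for $o\leq a$, but this does not affect the argument.
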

\begin{proof}
It is clear that the mapping $i:\Lambda^l_o\to\Lambda_o$ defined 
by $i_o([p]_w)=[p]_w$, with $[p]_w\in\Lambda^l_o$,  is a monomorphism. 
Moreover 
$g_*\circ i_o([p]_w)=[gp]_w = i_{go}([gp]_w)=i_{go}\circ g_*([p]_w)$ for any 
$[p]_w\in \Lambda^l_o$. This completes the proof.
\end{proof}

\subsubsection{Nets of $\rC^*$-algebras from nets of discrete groups}
\label{Bbc}

In the present section we make use of the functor assigning the group $\rC^*$-algebra
to construct nets of $\rC^*$-algebras starting from nets of discrete groups.\bigskip

Let $G$ be a discrete group and $\rC^*(G)$ denote the (full) group $\rC^*$-algebra; 
this is a unital $\rC^*$-algebra defined as the enveloping $\rC^*$-algebra of 
the convolution algebra $\ell^1(G)$.  The unital $^*$-algebra $\mathbb{C}(G)$ of those 
functions $f:G\to\mathbb{C}$ which are a finite linear combination 
$f=\sum_g f(g) \, \delta_g$ of  Kronecker delta functions 
is a  dense subset of $\ell^1(G)$. 
In particular $\delta_g*\delta_h=\delta_{gh}$. \\
\indent Given a group  morphism $\si:G\to H$, for any $f\in\mathbb{C}(G)$ let 
\[
 \tilde\si(f):= \sum f(g) \delta_{\rho(g)} \ . 
\]
This defines a morphism $\tilde\si: \mathbb{C}(G)\to \mathbb{C}(H)$
in the $\ell^1$-norm. Moreover it is an isometry (isomorphism) when $\si$ is injective (is an isomorphism). 
Therefore $\tilde\si$ lifts to a continuous morphism  from $\ell^1(G)$ into $\ell^1(H)$. 
By  the universality property of the enveloping $\rC^*$-algebra there exists 
a unique morphism $\rC^*(\si):\rC^*(G)\to\rC^*(H)$ such that 
\[ 
 \rC^*(\si)\circ \iota_G = \iota_H\circ \tilde \si \ , 
\]
where $\iota$ denotes the embedding of the $\ell^1$ algebra into the enveloping 
$\rC^*$-algebra. 
Therefore, the mapping
\[
\rC^* : G \mapsto \rC^*(G) \ \ , \ \ \si \mapsto \rC^*(\si)
\]
is a functor. When $\si$ is a group isomorphism $\rC^*(\si)$ is, clearly, a ${^*}$-isomorphism,
nevertheless we are interested in the case where  $\si$ is simply a monomorphism,
and for a  locally compact group  $\rC^*(\si)$ is
not injective in general. On the other hand, assuming that $G,H$ are discrete we find that for every unitary 
representation $\pi$ of $G$ there is an isometry $V\in(\pi, \mathrm{ind}(\pi) \circ \si)$,
where $\mathrm{ind}(\pi)$ is the representation of $H$ induced by $\pi$
\footnote{We are grateful with A. Valette for drawing  our attention to this fact, and to
a counterexample when the group involved are not discrete.}
(see for instance \cite{Fol}). By the definition of the enveloping $\rC^*$-algebra, 
and the relation between the unitary representations of a locally compact group 
and  non degenerate representations of the $\ell^1$ algebra, 
$\rC^*(\si)$ is a faithful ${^*}$-morphism.\\ 
\indent The functor $\rC^*$ allows one to construct in the obvious way a net of $\rC^*$-algebras 
$(\rC^*(\cG),\rC^*(\jmath))_K$, 
$\rC^*(\cG)_o := \rC^*(\cG_o)$, 
$\rC^*(\jmath)_{o'o} := \rC^*(\jmath_{o'o})$,
$o \leq o' \in K$,
starting from the net of discrete groups $(\cG,\jmath)_K$.
Clearly, $(\rC^*(\cG),\rC^*(\jmath))_K$ is a net bundle when $(\cG,\jmath)_K$ is a net bundle,
and this yields the following result. 
\begin{proposition}
\label{Bbc:1}
Given a poset $K$, 
let $(\Lambda^l,\lambda)_K$ and  $(\Lambda,\lambda)_K$ 
be  the net of discrete groups defined in \S \ref{Bba}. Then
$(\rC^*(\Lambda^l),\rC^*(\lambda))_K$ and $(\rC^*(\Lambda),\rC^*(\lambda))_K$
are respectively a net of $\rC^*$-algebras and a $\rC^*$-net bundle, and 
\[
\rC^*(i):(\rC^*(\Lambda^l),\rC^*(\lambda))_K\to (\rC^*(\Lambda),\rC^*(\lambda))_K
\]
is a unital monomorphism. 
%
%
If $K$ has a symmetry group $G$, then all the above nets and morphisms are $G$-covariant. 
\end{proposition}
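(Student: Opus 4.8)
The plan is to build \Cref{Bbc:1} almost entirely by applying the functor $\rC^*$ of \S\ref{Bbc} to the structures of \S\ref{Bba} and transporting their properties. First I would invoke \Cref{Bba:3}(i), which gives me the net bundle $(\Lambda,\lambda)_K$ of discrete groups, the net $(\Lambda^l,\lambda)_K$ of discrete groups, and the monomorphism $i:(\Lambda^l,\lambda)_K\to(\Lambda,\lambda)_K$ of nets induced by the inclusions $i_o:\Lambda^l_o\to\Lambda_o$. I then apply $\rC^*$ objectwise and arrowwise: set $\rC^*(\Lambda)_o:=\rC^*(\Lambda_o)$, $\rC^*(\lambda)_{o'o}:=\rC^*(\lambda_{o'o})$, and likewise for $\Lambda^l$. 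Because $\rC^*$ is a functor, the net relations $\lambda_{o'o}\circ\lambda_{oe}=\lambda_{o'e}$ are carried to $\rC^*(\lambda_{o'o})\circ\rC^*(\lambda_{oe})=\rC^*(\lambda_{o'e})$, so both $(\rC^*(\Lambda^l),\rC^*(\lambda))_K$ and $(\rC^*(\Lambda),\rC^*(\lambda))_K$ are nets of $\rC^*$-algebras; that the fibres are unital and the inclusion maps unital follows since group $\rC^*$-algebras of discrete groups are unital and $\rC^*(\si)$ preserves the unit.

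Next I would establish the two decorations. For the net-bundle claim: each $\lambda_{o'o}:\Lambda_o\to\Lambda_{o'}$ is a group \emph{isomorphism} by \Cref{Bba:3}(i), and functoriality of $\rC^*$ sends isomorphisms to ${^*}$-isomorphisms (as recorded in \S\ref{Bbc}), so every $\rC^*(\lambda_{o'o})$ is a ${^*}$-isomorphism and $(\rC^*(\Lambda),\rC^*(\lambda))_K$ is a $\rC^*$-net bundle. For the monomorphism claim: $\rC^*(i)$ is the family $\rC^*(i_o):\rC^*(\Lambda^l_o)\to\rC^*(\Lambda_o)$, and since each $i_o$ is an injective group morphism of \emph{discrete} groups, the crucial faithfulness fact proved in \S\ref{Bbc}---that $\rC^*(\si)$ is a faithful ${^*}$-morphism whenever $\si$ is a monomorphism of discrete groups---makes every $\rC^*(i_o)$ faithful. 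Together with the intertwining relation $\rC^*(i_{o'})\circ\rC^*(\lambda^l_{o'o})=\rC^*(\lambda_{o'o})\circ\rC^*(i_o)$ (again functoriality applied to the morphism relation for $i$), this shows $\rC^*(i)$ is a unital monomorphism of nets.

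For the covariance addendum I would assume $K$ has a symmetry group $G$ and use \Cref{Bba:3}(ii), which furnishes the $G$-actions $g_*$ on $\Lambda_o$ and $\Lambda^l_o$ by group isomorphisms, the covariance relations $g_*\circ\lambda_{ao}=\lambda_{g(ao)}\circ g_*$, and the $G$-covariance of $i$. Applying $\rC^*$ to each $g_*$ gives ${^*}$-isomorphisms $\rC^*(g_*):\rC^*(\Lambda_o)\to\rC^*(\Lambda_{go})$, and functoriality turns the group-level cocycle identity $g_*\circ h_*=\ldots$ and the two covariance relations into their $\rC^*$-algebraic counterparts, so $(\rC^*(\Lambda),\rC^*(\lambda))_K$ and $(\rC^*(\Lambda^l),\rC^*(\lambda))_K$ become $G$-covariant and $\rC^*(i)$ is $G$-covariant.

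I do not expect a genuine obstacle here, since everything reduces to functoriality plus two facts already proved in \S\ref{Bbc}: that $\rC^*$ sends isomorphisms to ${^*}$-isomorphisms and monomorphisms of \emph{discrete} groups to faithful ${^*}$-morphisms. The only point demanding care---and the one I would state explicitly rather than wave through---is that the faithfulness of $\rC^*(i_o)$ relies essentially on the discreteness hypothesis, precisely where the general locally compact case fails; so the whole argument is just the verification that the functor transports each piece of \Cref{Bba:3} intact.
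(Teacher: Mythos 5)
Your proof is correct and takes essentially the same approach as the paper: the paper's own (much terser) proof derives the first assertion from the functoriality of $\rC^*$ --- with faithfulness of $\rC^*(i_o)$ resting on the discrete-group fact established in \S\ref{Bbc} just before the statement, exactly as you emphasize --- and obtains covariance by defining $c_g := \rC^*(g_*)$ from the action (\ref{Bba:2}) of Prop.~\ref{Bba:3}, which is precisely your construction. The only difference is that you spell out the verifications the paper dismisses as ``easily verified.''
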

\begin{proof}
The first assertion follows as  $\rC^*$ is a functor. 
In a similar fashion, to prove covariance we note that by Prop.\ref{Bba:3}
the two  nets of groups are $G$-covariant, and, given $g\in G $, define
\[
c_g := \rC^*(g^*) : \rC^*(\Lambda)_o\to \rC^*(\Lambda)_{go}
\]
where $g^*:\Lambda_o\to \Lambda_{go}$ is defined in (\ref{Bba:2}). 
It is easily verified that this makes our nets $G$-covariant, as desired. 
\end{proof}
\begin{remark}
\label{Bbc:2}
With the applications to algebraic quantum field theory in mind,
this is a promising result: it  allows us 
to construct nontrivial examples of covariant nets of $\rC^*$-algebras over any spacetime, taking 
as poset a suitable base of neighbourhoods for the topology of the spacetime 
encoding the causal structure and the global symmetry of the spacetime, as 
indicated in the preliminaries. However, this result, as it stands,  is not complete.
First, the above nets are not causal. Secondly, if $G$ is a continuous symmetry for the poset (\ref{Ab:1}), 
then the action of $G$ on the net of $\rC^*$-algebras defined in Prop.\ref{Bbc:1} 
is not continuous according to the definition given in \S \ref{Ba}. This is so 
because $w$-equivalence, used to define the groups of loops, is weaker than homotopy  
equivalence. We will fill these gaps in a forthcoming paper  \cite{CRV}.
\end{remark}



\subsection{The holonomy dynamical system} 
\label{Bc}

We now focus on net bundles.
We shall see that the fibres of a net bundle are acted upon 
by the homotopy group of the poset.  This in turns will lead to 
an equivalence between the category of net bundles over a poset and that of  
$\pi^o_1(K)$-dynamical systems for any $o\in K$. We shall work using 
morphisms over different posets, a scenario that will appear in the
forthcoming paper \cite{RVs1} and is useful in generally locally covariant field theory
(\cite{BFV}).
\bigskip

We start by making some preliminary definitions. 
Let $(\rA,G,\alpha)$ and $(\rB,H,\beta)$ be $\rC^*$-dynamical systems; a morphism 
from $(\rA,G,\alpha)$ to $(\rB,H,\beta)$ is pair $(\eta,\rp)$ where 
$\eta:\rA\to\rB$ is a ${^*}$-morphism and $\rp:G\to H$ is a group morphism satisfying the relation 
\begin{equation}
\label{Bc:1}
\eta \circ \alpha_g= \beta_{\rp(g)}\circ \phi \ , \qquad g\in G \ . 
\end{equation}
We say that $(\eta,\rf)$ is an \emph{isomorphism} when $\eta$ is a ${^*}$-isomorphism and 
$\rf$ is a group isomorphism. When $G=H$ we shall denote morphisms of the form 
$(\eta,\mathrm{id}_G)$ by $\eta$.\smallskip

Now, consider a net bundle $(\cA,{\jmath})_K$ and define 
\begin{equation}
\label{Bc:2}
 {\jmath}_b:= {\jmath}_{\partial_0b|b|}\circ {\jmath}_{|b|\partial_1b} \ , \qquad b\in\Si_1(K) \ , 
\end{equation}
where ${\jmath}_{\partial_0b|b|}:= {\jmath}^{-1}_{|b|\partial_0b}$. Since the inclusion 
maps are isomorphisms, we have a field 
\[
\Si_1(K)\ni b\to {\jmath}_b \in \mathrm{Iso}(\cA_{\partial_1b},\cA_{\partial_0b}) \ ,
\] 
satisfying  the \emph{1-cocycle} 
equation
\begin{equation}
\label{Bc:3}
 {\jmath}_{\partial_0c}\circ {\jmath}_{\partial_2c}={\jmath}_{\partial_1c} \ , \qquad c\in\Si_2(K) \ , 
\end{equation}
where $\mathrm{Iso}(\cA_{\partial_1b},\cA_{\partial_0b})$ is the set of ${^*}$-isomorphisms 
from $\cA_{\partial_1b}$ to $\cA_{\partial_0b}$. 
Extend this 1-cocycle from 1-simplices to paths by setting  
${\jmath}_p:={\jmath}_{b_n}\circ \cdots \circ{\jmath}_{b_2}\circ{\jmath}_{b_1}$ 
for any path $p:=b_n*\cdots *b_2*b_1$. Then the 1-cocycle equation implies \emph{homotopy invariance}, 
that is, ${\jmath}_p={\jmath}_q$ whenever $p\sim q$ (see \cite{Ruz}). 
In this way, fixing an element $o$ of $K$ and defining $\cA_* := \cA_o$ yields the action 
\begin{equation}
\label{Bc:4}
\jmath_* : \pi_1^o(K) \to {\mathrm{Aut}} \cA_*
\ \ , \ \
\jmath_{*, [p]}:= {\jmath}_p 
\ \ , \ \
[p]\in\pi_1^o(K)
\ .
\end{equation}
We call
$( \cA_* , \pi_1^o(K) , \jmath_* )$  
the \emph{holonomy dynamical system} associated with the net bundle $(\cA,\jmath)_K$. 
A different choice of the base element $o$ leads to an isomorphic  
dynamical system. 
If $(\phi,\rf) : (\cA,\jmath)_K \to (\cB,\imath)_S$ is a morphism, 
there is an induced morphism $\rf_* : \pi_1^o(K) \to \pi_1^{\rf(o)}(S)$, and defining
\[
( \phi_* , \rf_* ) : 
( \cA_* , \pi_1^o(K) , \jmath_* ) \to 
( \cB_* , \pi_1^{\rf(o)}(S) , \imath_* )
\ \ , \ \
\phi_* := \phi_o
\ ,
\]
yields a morphism of dynamical systems, in fact
\[
\phi_* \circ \jmath_{*,[p]}  = 
\phi_o \circ \jmath_p = 
\imath_{\rf_*(p)} \circ \phi_o = 
\imath_{*,\rf_*[p]}\circ \phi_* \ . 
\]
%

\begin{remark}
\label{Bc:5}
Just a comment on the terminology: as said before, a net bundle can be seen 
as a fibre bundle over the underlying poset; the fibre over $o$ is nothing but  the 
algebra ${\cA_o}$, and the inclusion maps ${\jmath}$ define a connection of the fibre bundle. 
So, for any loop $p$ over $o$, the automorphism ${\jmath}_p$ is the parallel transport 
along  $p$ for the connection ${\jmath}$, i.e., the holonomy.
\end{remark}

Now, we want to prove that the mapping 
\begin{equation}
\label{Bc:6}
(\cA,\jmath)_K \mapsto ( \cA_* , \pi_1^o(K) , \jmath_* )
\end{equation} 
is bijective up to isomorphism. Thus we have to find an inverse
(up to isomorphism).  To this end, we fix a path frame 
$P_o := \{ p_{(a,o)} , a \in K \}$ over $o\in K$ (see \S \ref{A:b}) 
and, given the dynamical system $(\rA,\pi_1^o(K),\alpha)$, define
\begin{equation}
\label{Bc:7}
 \alpha_{*,\tilde a a }:= \alpha_{[p_{(o,\tilde a)}*(a\tilde a)*p_{(a,o)}]} 
 \in {\mathrm{Aut}}\rA
 \ , \qquad 
 a\leq \tilde a \ , 
\end{equation}
where, by convention, $p_{(o,\tilde a)}$ denotes the opposite $\overline{p_{(\tilde a,o)}}$ of 
$p_{(\tilde a,o)}$. Observe that 
\begin{align*}
 \alpha_{*,\tilde a a } \circ \alpha_{*,ae} & = 
 \alpha_{[p_{(o,\tilde a)}*(\tilde aa)*p_{(a,o)}]}\circ 
 \alpha_{[p_{(o,a)}*(ae)*p_{(e,o)}]} 
  = \alpha_{[p_{(o,\tilde a)}*(\tilde aa)*p_{(a,o)}* p_{(o,a)}*(ae)*p_{(e,o)}]} \\
 & = \alpha_{[p_{(o,\tilde a)}*(\tilde aa)*(ae)*p_{(e,o)}]} 
  = \alpha_{[p_{(o,\tilde a)}*(\tilde ae)*p_{(e,o)}]} \\
  & = 
 \alpha_{*,\tilde a e} \ ,
\end{align*}
where we have used the homotopy equivalence of the paths involved. 
So $(\rA_*,\alpha_*)_K$, where $\rA_*$ is the constant 
assignment $\rA_{*,a} := \rA$ for any $a \in K$, is a $\rC^*$-net bundle. 
We call $(\rA_* , \alpha_* )_K$ the \emph{net bundle associated} 
with the dynamical system $(\rA,\pi_1^o(K),\alpha)$. It is easily seen that 
a different choice of  path frame leads to isomorphic  
net bundles. \\ 
\indent Passing to the level of morphisms requires more attention. 
Consider a poset morphism
$\rf : K \to S$. 
Take a path frame $P_o$ in $K$, and choose, in $S$ a path frame 
$P'_{\rf(o)}:=\{ q_{(a',\rf(o))},\  a'\in S\}$  whose elements satisfy the 
condition
\[
 q_{(\rf(a),\rf(o))} = \rf(p_{(a,o)})  \ , \qquad a\in K \ . 
\]
So, this path frame is an extension of the image  $\rf(P_o)$  of $P_o$ to $S$.  
Such an extension exists since no other 
restriction is imposed on  a path frame 
but that $q_{(\rf(o),\rf(o))}$ be homotopic to the degenerate 1-simplex $\si_0\rf(o)$. 
Since $q_{(\rf(o),\rf(o))}=\rf(p_{(o,o)})$, this condition is automatically fulfilled because 
poset morphisms preserve homotopy equivalence (see \S \ref{A:b}). \\
\indent Given a morphism
\smash{$(\eta,\rf_*) : (\rA,\pi_1^o(K),\alpha) \to (\rB,\pi_1^{\rf(o)}(S),\beta)$} 
of dynamical systems, we construct the net bundle $(\rB_*,\beta_*)_S$ using $P'_{\rf(o)}$ 
as above\footnote{We  consider only morphisms of the form $(\eta,\rf_*)$ where 
$\rf_*$ is the extension of a poset morphism $\rf:K\to S$ to the corresponding homotopy groups. 
This suffices for our purpose.}.  Define 
$\eta_{*,a} := \eta$, $\forall a \in K$,
giving
\[               
\eta_{*,\tilde a} \circ \alpha_{*,\tilde a a }  = 
\eta \circ \alpha_{[p_{(o,\tilde a)}*(\tilde aa)*p_{(a,o)}]} =
\beta_{\rf_*[p_{(o,\tilde a)}*(\tilde aa)*p_{(a,o)}]} \circ \eta =
\beta_{*,\rf(\tilde a) \rf(a)} \circ \eta_{*,a}
\ .
\]                
Thus $\eta_*$ is a morphism of net bundles. We are ready to prove 
that the mapping 
\begin{equation}
\label{Bc:8}
(\rA,\pi_1^o(K),\alpha) \mapsto (\rA_* , \alpha_* )_K
\end{equation}
%
%
is, up to isomorphism, the inverse of (\ref{Bc:6}). This amounts to showing that 
the dynamical system $(\rA,\pi_1^o(K),\alpha)$  and the net bundle $(\cA,\jmath)_K$ are, respectively, isomorphic to the dynamical system 
$(\rA_{**},\pi_1^o(K),\alpha_{**})$ and to  the net bundle 
$(\cA_{**},\jmath_{**})_K$ defined with respect to a fixed pole $o$ and to a fixed 
path frame $P_o$ of $K$. 
In the first case, by construction we have
$\rA_{**} = \rA_{*,a} \equiv \rA$ for all $a \in K$, and, for any loop $q$ over $o$ 
of the form $q=b_n*\cdots *b_1$,
\[
  \alpha_{**,[q]}  = \alpha_{*,q} 
= \alpha_{
     [p_{(o,\partial_0b_n)}*b_n *p_{(\partial_1b_n,o)}* 
     \cdots * 
     p_{(o,\partial_0b_1)}*b_1 *p_{(\partial_1b_1,o)}]} 
 = \alpha_{[p_{(o,o)}*q*p_{(o,o)}]}
 = \alpha_{[q]} \ ,
\] 
thus $(\rA_{**},\pi_1^o(K),\alpha_{**}) = (\rA,\pi_1^o(K),\alpha)$.
In the second case, we define the family of $^*$-isomorphisms
\begin{equation}
\label{Bc:9}
\tau_a:= {\jmath}_{p_{(o,a)}} : \cA_a \to \cA_o \ , \qquad a\in K \ ; 
\end{equation}
to prove that $\tau$ defines an isomorphism 
$\tau : (\cA,\jmath)_K \to (\cA_{**},\jmath_{**} )_K$,
we compute
\[
 \tau_a \circ {\jmath}_{ae}  = {\jmath}_{p_{(o,a)}*(ae)} = 
 {\jmath}_{p_{(o,a)}*(ae)*p_{(e,o)}}\circ {\jmath}_{p_{(o,e)}} \\
 = \jmath_{*,[p_{(o,a)}*(ae)*p_{(e,o)}]}\circ {\jmath}_{p_{(o,e)}}
  = {\jmath}_{**,ae} \circ \tau_e  \ ,
\] 
and this shows that $\tau$ preserves the net structures, as desired.
Thus we have proved:
\begin{proposition}
\label{Bc:10}
There exists a correspondence, bijective up to isomorphism, 
between net bundles over a poset and dynamical systems having as group the first homotopy group
of the poset.  
In particular, the category of net bundles over $K$ with morphisms 
$(\phi,\mathrm{id}_K)$ is equivalent to the category 
of  $\rC^*$-dynamical systems with group $\pi^o_1(K)$, for some $o\in K$, with morphisms  
$(\eta, \mathrm{id}_{\pi^o_1(K)})$. 
\end{proposition}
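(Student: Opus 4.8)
The plan is to package the constructions carried out just before the statement into a pair of mutually inverse functors and then verify the two naturality conditions. Write $F$ for the assignment (\ref{Bc:6}), sending a net bundle $(\cA,\jmath)_K$ to its holonomy dynamical system $(\cA_*,\pi_1^o(K),\jmath_*)$ and a morphism $\phi=(\phi,\mathrm{id}_K)$ to $\phi_*:=\phi_o$; write $G$ for the assignment (\ref{Bc:8}), sending a dynamical system $(\rA,\pi_1^o(K),\alpha)$ to the associated net bundle $(\rA_*,\alpha_*)_K$ and a morphism $\eta=(\eta,\mathrm{id}_{\pi_1^o(K)})$ to $\eta_*$ (with $\eta_{*,a}:=\eta$ for all $a$). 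The intertwining relations making $\phi_*$ a morphism of dynamical systems and $\eta_*$ a morphism of net bundles are exactly the two displayed computations preceding the statement; what remains for functoriality is the routine check that $F$ and $G$ preserve identities and compositions, which is immediate since on each object the underlying ${^*}$-morphism is read off at the single fibre $o$ (for $F$), respectively is constant in $a$ (for $G$).

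Next I would assemble the two round-trip computations already performed in the text. On objects one has the honest equality $F(G(\rA,\pi_1^o(K),\alpha))=(\rA_{**},\pi_1^o(K),\alpha_{**})=(\rA,\pi_1^o(K),\alpha)$, and from (\ref{Bc:9}) an isomorphism $\tau:(\cA,\jmath)_K\to(\cA_{**},\jmath_{**})_K=G(F(\cA,\jmath)_K)$. At the level of isomorphism classes these two facts already yield the first assertion: $F\circ G=\mathrm{id}$ exhibits $G$ as a section of $F$, while $G\circ F\simeq\mathrm{id}$ via $\tau$ shows $F$ is injective on isomorphism classes, so $F$ is a bijection between isomorphism classes of net bundles and of dynamical systems with group $\pi_1^o(K)$. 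The dependence on the choice of pole $o$ only alters $\pi_1^o(K)$ within its isomorphism class, and a change of path frame $P_o$ yields isomorphic net bundles, as already noted; these account for the qualifier ``up to isomorphism''.

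For the ``in particular'' clause I would upgrade the two round trips to natural isomorphisms. On the $F\circ G$ side equality persists on morphisms, since $F(G(\eta))=(\eta_*)_o=\eta_{*,o}=\eta$, so $F\circ G=\mathrm{id}$ strictly. On the $G\circ F$ side I must check that $\tau$ is natural, i.e.\ that for every $\phi:(\cA,\jmath)_K\to(\cB,\imath)_K$ the family $\{\tau_a\}$ intertwines $\phi$ with $G(F(\phi))$, whose components are the constant ${^*}$-morphism $\phi_o$. Unwinding the definitions, naturality is the identity
\begin{equation*}
\phi_o\circ{\jmath}_{p_{(o,a)}}={\imath}_{p_{(o,a)}}\circ\phi_a \ , \qquad a\in K \ .
\end{equation*}

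The main (and only nonformal) point is to establish this path-level intertwining relation. The plan is to prove, by induction on the length of a path $p:a\to\tilde a$, that $\phi_{\tilde a}\circ{\jmath}_p={\imath}_p\circ\phi_a$. The base case treats a single $1$-simplex $b$: starting from the defining relation $\phi_{|b|}\circ{\jmath}_{|b|\partial_1 b}={\imath}_{|b|\partial_1 b}\circ\phi_{\partial_1 b}$ and from $\phi_{\partial_0 b}\circ{\jmath}_{\partial_0 b\,|b|}={\imath}_{\partial_0 b\,|b|}\circ\phi_{|b|}$ (the latter obtained by inverting $\phi_{|b|}\circ{\jmath}_{|b|\partial_0 b}={\imath}_{|b|\partial_0 b}\circ\phi_{\partial_0 b}$, legitimate because net-bundle inclusions are invertible), one composes to get $\phi_{\partial_0 b}\circ{\jmath}_b={\imath}_b\circ\phi_{\partial_1 b}$ using (\ref{Bc:2}); the inductive step then follows from ${\jmath}_{q*b}={\jmath}_q\circ{\jmath}_b$. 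Specializing to $p=p_{(o,a)}$, whose starting point is $a$ and ending point is $o$, gives the displayed identity, and homotopy invariance of the cocycle (\ref{Bc:3})--(\ref{Bc:4}) guarantees independence of the chosen representative in $P_o$. With $\tau$ natural and $F\circ G=\mathrm{id}$, the pair $(F,G)$ is an equivalence of categories, completing the proof.
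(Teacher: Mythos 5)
Your proposal is correct and takes essentially the same route as the paper: the same pair of constructions (\ref{Bc:6}) and (\ref{Bc:8}), the same strict equality $(\rA_{**},\pi_1^o(K),\alpha_{**})=(\rA,\pi_1^o(K),\alpha)$ for one round trip, and the same isomorphism $\tau$ of (\ref{Bc:9}) for the other. The only difference is that you spell out what the paper compresses into ``follows directly from the above calculations,'' namely functoriality and the naturality of $\tau$; your inductive proof of the path-level relation $\phi_o\circ\jmath_{p_{(o,a)}}=\imath_{p_{(o,a)}}\circ\phi_a$ is exactly the fact the paper uses implicitly when it writes $\phi_o\circ\jmath_p=\imath_{\rf_*(p)}\circ\phi_o$ to show that $\phi_*$ intertwines the holonomy actions.
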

\begin{proof}
The first part of the statement has already been proved. The second part, the categorical
equivalence, follows directly from the above calculations. 
\end{proof}
The preceding analysis has a wider scope than that indicated
in Prop. \ref{Bc:10}. In algebraic quantum field theory one deals with nets 
of $\rC^*$-algebras over a poset associated with a fixed spacetime. 
Thus considering morphisms of nets between different posets  means  
dealing with nets defined over different spacetimes. This is part of  the more general framework of the 
generally locally covariant quantum field theories \cite{BFV}. 
However,  this topic would lead  us too far from the mainline 
of the paper. 
\begin{corollary}
\label{Bc:11}
If $K$ is simply connected then any  net bundle $(\cA,\jmath)_K$ is trivial.
\end{corollary}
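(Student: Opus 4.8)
The plan is to apply the correspondence of Proposition \ref{Bc:10} directly: a net bundle is reconstructed, up to isomorphism, from its holonomy dynamical system, and when $K$ is simply connected this dynamical system carries the trivial group, which forces the reconstructed net bundle to be constant.

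First I would invoke Proposition \ref{Bc:10} to identify $(\cA,\jmath)_K$, up to isomorphism, with the net bundle $(\cA_{**},\jmath_{**})_K$ associated (via (\ref{Bc:7}), for a fixed pole $o$ and path frame $P_o$) with its holonomy dynamical system $(\cA_*,\pi_1^o(K),\jmath_*)$, where $\cA_* = \cA_o$. Since $K$ is simply connected, $\pi_1^o(K)$ is the trivial group, so the action $\jmath_*$ is the trivial action: the only homotopy class of loops over $o$ is that of the trivial loop $\iota_o$, and $\jmath_{*,[\iota_o]} = \mathrm{id}_{\cA_*}$.

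Next I would read off the inclusion maps of the associated net bundle. By (\ref{Bc:7}), for any inclusion $a \leq \tilde a$,
\[
\jmath_{**,\tilde a a} = \jmath_{*,[p_{(o,\tilde a)}*(\tilde a a)*p_{(a,o)}]} \ ,
\]
and the bracketed loop over $o$ is, by simple connectedness, homotopic to $\iota_o$. Hence $\jmath_{**,\tilde a a} = \mathrm{id}_{\cA_*}$ for every such inclusion, so $(\cA_{**},\jmath_{**})_K$ is precisely the constant net bundle with fibre $\cA_* = \cA_o$ and identity inclusion maps, in the sense of the definition in \S \ref{Ba}. Composing with the isomorphism furnished by Proposition \ref{Bc:10} then exhibits $(\cA,\jmath)_K$ as isomorphic to a constant net bundle, i.e.\ trivial.

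The only point requiring care — and the nearest thing to an obstacle — is to confirm that the net bundle produced by the reconstruction (\ref{Bc:7}) with a trivial group coincides \emph{on the nose} with a constant net bundle as defined in \S \ref{Ba}, rather than being merely abstractly isomorphic to one; this is immediate once the inclusion maps are seen to reduce to the identity, but it is worth stating explicitly so that the conclusion matches the paper's definition of triviality.
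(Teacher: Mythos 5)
Your proposal is correct and follows essentially the same route as the paper: both reduce to Proposition \ref{Bc:10}, observe that simple connectedness forces the holonomy dynamical system to carry the trivial group (hence trivial action), and note that the reconstructed net bundle then has identity inclusion maps, i.e.\ is constant. You merely spell out explicitly, via (\ref{Bc:7}), the step the paper calls ``clearly trivial,'' which is a welcome but not substantively different elaboration.
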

\begin{proof}
If $K$ is simply connected then $\pi_1^o(K)$ is trivial and hence only trivial 
$\pi_1^o(K)$-actions occur. On the other hand, by construction, 
the net bundle associated with a dynamical system with trivial action is clearly trivial.
\end{proof}
The above result applies analogously to other categories than that  of
$\rC^*$-algebras, for example, the category  of Hilbert spaces or topological groups.
\begin{example}
\label{Bc:12}
Let $\rf : \pi_1^o(K) \to G$ be a continuous group morphism 
(here $\pi_1^o(K)$ has the discrete topology). If $(\rA,G,\alpha)$
is a dynamical system, then there is an induced net bundle
$(\rA_*,(\alpha \circ \rf)_*)_K$.
In particular:
\begin{itemize}
\item[(i)]  The unitary group $\bU(d)$, $d \in \bN$, acts on the Cuntz algebra 
            $\cO_d$, thus every unitary representation of $\pi_1^o(K)$ induces a 
            net bundle with fibre $\cO_d$.
\item[(ii)] Let $(V,\omega)$ be a symplectic space and 
            $\rf : \pi_1^o(K) \to {\mathrm{Aut}}(V,\omega)$
           a group morphism. Since the symplectic group ${\mathrm{Aut}}(V,\omega)$
           acts by automorphisms on the Weyl algebra $\rW_{(V,\omega)}$, we conclude that
           $\rf$ defines a net bundle with fibre $\rW_{(V,\omega)}$.
\end{itemize}
\end{example}


%
\subsection{The enveloping net bundle and injectivity}
\label{Bd}

As we saw in the previous section net bundles can be efficiently classified in terms of 
dynamical systems and contain interesting geometric informations, thus it is of interest
to characterize those nets of $\rC^*$-algebras admitting an embedding into a $\rC^*$-net bundle.
In a certain sense, this is a slight generalization of the problem of characterizing those nets
admitting an embedding into a \emph{single} $\rC^*$-algebra, as for example the Fredenhagen
universal algebra \cite{Fre}. In the present section we shall show  
that any net of $\rC^*$-algebras embeds into a unique $\rC^*$-net bundle:
the enveloping net bundle.  
We then define to be injective those nets having a faithful embedding  into the  
enveloping net bundle.\bigskip

Let $(\cA,{\jmath})_K$ denote a net of $\rC^*$-algebras. 
Given $o\in K$, we let $\overline{\cA}_o$ be the free unital algebra
generated by the set of symbols
\begin{equation}
\label{Bd:1}
 \{ \, (p,A) \ | \ \partial_0p = o \, , \ A\in\cA_{\partial_1p} \, \}   \ . 
\end{equation}
This is indeed a ${^*}$-algebra: the adjoint is defined on generators by 
\begin{equation}
\label{Bd:2}
(p,A)^*:= (p,A^*) \ , 
\end{equation}  
and extended by anti- multiplicativity and anti- linearity to all of $\overline{\cA}_o$.\smallskip 

 We now add some additional relations. The first set of relations are of algebraic nature : 
\begin{align}
\label{Bd:3}
&  (p,A)\cdot (p,B) = (p,AB) \\
\label{Bd:4}
&  (p,A+B)= (p,A) + (p,B)  \\
\label{Bd:5}
&  (p,\mathbbm{1})=\mathbbm{1} \ , \ \ \ (p,0)=0 \ ,  
\end{align}
which hold for any path $p$ and $A,B\in \cA_{\partial_1p}$. 
The next  two relations encode the net structure and the topology of the poset.  
The first one is  \emph{isotony}: given $\tilde a\geq a$,
\begin{equation}
\label{Bd:6}
(p,{\jmath}_{\tilde aa}(A))= (p*(\tilde a a),A)  \ , 
\end{equation}
for any path $p:\tilde a\to o$ and $A\in{\cA_a}$. The second is  
\emph{homotopy invariance}: if $p\sim q$ then   
\begin{equation}
\label{Bd:7}
 (p,A)= (q,A) \ .  
\end{equation}
With an abuse of notation, we again denote the ${^*}$-algebra obtained by imposing these 
additional relations by  $\overline{\cA}_o$ \footnote{We note the close relation between the algebra $\overline{\cA}_o$ and the fundemantal group of a complex of groups (cfr. \cite{Hae})}. \smallskip

Now, we want to make the family $\{ \overline{\cA}_o \}$ into a net.
To this end, given $o\leq \tilde o$, we define  
\begin{equation}
\label{Bd:8}
\overline{\jmath}_{\tilde oo} (p,A) := ((\tilde oo)*p,A) \ , \qquad 
(p,A)\in \overline{\cA}_o \ ,  
\end{equation}
and extend it by multiplicativity and linearity to  all of $\overline{\cA}_o$. 
It is easily seen that 
$\overline{\jmath}_{\tilde oo}:\overline{\cA}_o \to \overline{\cA}_{\tilde o}$ 
is a well defined ${^*}$-morphism, which, by homotopy invariance, is invertible. 
Moreover, given $o\leq \tilde o\leq e$ and $(p,A)\in \overline{\cA}_o$, 
by homotopy invariance we have  
\[ 
    \overline{\jmath}_{e\tilde o} \circ \overline{\jmath}_{\tilde oo} (p,A)
  = \overline{\jmath}_{e\tilde o} ((\tilde oo)*p,A) 
 = ( (e\tilde o)*(\tilde oo)*p,A) = ((eo)*p,A) 
 = \overline{\jmath}_{eo} (p,A) \ .
\]
This proves that $(\overline{\cA},\overline{\jmath})_K$ is a net bundle of ${^*}$-algebras.\smallskip

We now introduce a norm making $(\overline{\cA},\overline{\jmath})_K$ a $\rC^*$-net bundle. 
Given $o\in K$, for any  $W\in\overline{\cA}_o$, we define
\begin{equation}
\label{Bd:10}
\norm{W} := \sup_\pi \norm{\pi_o(W)} \ ,
\end{equation}
where the sup is taken over the set of morphisms 
$\pi : (\overline{\cA},\overline{\jmath})_K \to (\cB,\imath)_K$
taking values in $\rC^*$-net bundles. If $\| W \| = 0$ then for every $\pi$ 
and $\tilde o \geq o$ we have
$\| \pi_o(W) \| = 
 \| \imath_{\tilde o o} \circ \pi_o(W) \| =
 \| \pi_{\tilde o} \circ \overline{\jmath}_{\tilde oo}(W) \|$;
thus $\norm{\overline{\jmath}_{\tilde oo}(W)} = 0$ and $\norm{\cdot}$ is well-defined
with respect to the inclusion maps $\overline{\jmath}_{\tilde o o}$. Clearly 
$\|\cdot\|$ is a seminorm for $\overline{\cA}_o$. The completion of the 
quotient of $\overline{\cA}_o$ 
by the ideal of null elements, is a $\rC^*$-algebra that, with an abuse of notation, 
we again denote by $\overline{\cA}_o$. 
This yields  a $\rC^*$-net bundle $(\overline{\cA},\overline{\jmath})_K$ that we call 
the \emph{enveloping net bundle of $(\cA,\jmath)_K$}. 

\begin{proposition}
\label{Bd:11}
Given a net $(\cA,\jmath)_K$ of $\rC^*$-algebras there is a  
unital morphism 
\[
\e : (\cA,\jmath)_K \to (\overline{\cA} , \overline{\jmath})_K \ , 
\] 
satisfying the following properties: 
\begin{itemize}
\item[(i)] 
let $(\varphi,\rh), (\theta,\rh)$ be a pair of morphisms from the enveloping net bundle to 
a $\rC^*$-net bundle, if  
$(\varphi,\rh)\circ \e= (\theta,\rh)\circ \e$, then $ \varphi = \theta$; 
\item[(ii)] for any morphism $(\psi,\rf)$ from $(\cA,\jmath)_K$ into a $\rC^*$-net bundle  $(\cB,\imath)_S$ there is a unique morphism $(\psi^\uparrow,\rf) : (\overline{\cA},\overline{\jmath})_K \to (\cB,\imath)_S$ such that 
            $(\psi,\rf)= (\psi^\uparrow,\rf)\circ \e$.
\end{itemize}
\end{proposition}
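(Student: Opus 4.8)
The plan is to define $\e$ by sending a fibre element along the \emph{trivial} loop. For $o\in K$ and $A\in\cA_o$ I would set
\[
\e_o(A):=(\iota_o,A)\in\overline{\cA}_o \ ,
\]
where $\iota_o=\si_0o$ is the trivial loop over $o$. Relations (\ref{Bd:2})--(\ref{Bd:5}) make each $\e_o$ a unital ${}^*$-homomorphism. To check that $(\e,\mathrm{id}_K)$ respects the net structure, fix $a\le o$ and $A\in\cA_a$. Isotony (\ref{Bd:6}) applied to the path $\iota_o:o\to o$ gives $(\iota_o,\jmath_{oa}(A))=(\iota_o*(oa),A)$, while by (\ref{Bd:8}) one has $\overline{\jmath}_{oa}(\e_a(A))=((oa)*\iota_a,A)$. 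Since $\iota_o*(oa)\sim(oa)\sim(oa)*\iota_a$, homotopy invariance (\ref{Bd:7}) collapses both to $((oa),A)$, so $\e_o\circ\jmath_{oa}=\overline{\jmath}_{oa}\circ\e_a$ and $\e$ is a unital morphism.

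\textbf{Construction and well-definedness of $\psi^\uparrow$.} Let $(\psi,\rf):(\cA,\jmath)_K\to(\cB,\imath)_S$ be a morphism into a $\rC^*$-net bundle, so that the holonomy cocycle $s\mapsto\imath_s$ of (\ref{Bc:2})--(\ref{Bc:3}) is defined, extends to paths, and is homotopy invariant. On generators of $\overline{\cA}_o$ I would set
\[
\psi^\uparrow_o(p,A):=\imath_{\rf(p)}\big(\psi_{\partial_1p}(A)\big)\in\cB_{\rf(o)}\ ,\qquad \partial_0p=o,\ A\in\cA_{\partial_1p}\ ,
\]
extended multiplicatively, linearly and ${}^*$-preservingly. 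The task is to verify that this respects every defining relation. The algebraic ones (\ref{Bd:3})--(\ref{Bd:5}) pass because $\psi_{\partial_1p}$ and $\imath_{\rf(p)}$ are unital ${}^*$-homomorphisms (here unitality of $\psi$ is what makes (\ref{Bd:5}) independent of $p$). For isotony (\ref{Bd:6}) with $p:\tilde a\to o$ and $A\in\cA_a$, the morphism relation $\psi_{\tilde a}\circ\jmath_{\tilde aa}=\imath_{\rf(\tilde a)\rf(a)}\circ\psi_a$ together with $\rf(p*(\tilde aa))=\rf(p)*\rf((\tilde aa))$ and the elementary computation $\imath_{\rf((\tilde aa))}=\imath_{\rf(\tilde a)\rf(a)}$ (the support of a nerve $1$-simplex is its $0$-face) gives equality of the two values. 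For homotopy invariance (\ref{Bd:7}), $p\sim q$ forces $\rf(p)\sim\rf(q)$ since poset morphisms preserve homotopy, whence $\imath_{\rf(p)}=\imath_{\rf(q)}$. Thus $\psi^\uparrow_o$ is well defined on the ${}^*$-algebra $\overline{\cA}_o$.

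\textbf{Boundedness, the morphism property, and factorization.} The first genuine obstacle is that the seminorm (\ref{Bd:10}) is a supremum over morphisms \emph{over $K$}, whereas $\psi^\uparrow$ lives over $\rf$. I would remove it by pulling back: put $\cB'_o:=\cB_{\rf(o)}$ and $\imath'_{\tilde oo}:=\imath_{\rf(\tilde o)\rf(o)}$ for $o\le\tilde o$; functoriality of $\rf$ and the net relations of $\imath$ make $(\cB',\imath')_K$ a $\rC^*$-net bundle over $K$, and the computation used for isotony shows that $\psi^\uparrow$ intertwines $\overline{\jmath}_{\tilde oo}$ with $\imath'_{\tilde oo}$, i.e.\ it is a morphism over $K$. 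Hence $\psi^\uparrow$ is one of the morphisms entering (\ref{Bd:10}), giving $\norm{\psi^\uparrow_o(W)}\le\norm{W}$; so $\psi^\uparrow_o$ annihilates the null ideal and extends to the completed fibres, and read back into $(\cB,\imath)_S$ the pair $(\psi^\uparrow,\rf)$ is a morphism of nets. Finally $\psi^\uparrow_o(\e_o(A))=\imath_{\rf(\iota_o)}(\psi_o(A))=\psi_o(A)$, because $\rf(\iota_o)$ is the trivial loop at $\rf(o)$ and the cocycle is the identity there; thus $(\psi,\rf)=(\psi^\uparrow,\rf)\circ\e$.

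\textbf{Uniqueness and part (i).} Both rest on the generation identity
\[
(p,A)=\overline{\jmath}_p\big(\e_{\partial_1p}(A)\big)\ ,\qquad \partial_0p=o\ ,
\]
where $\overline{\jmath}_p$ is the holonomy cocycle of $(\overline{\cA},\overline{\jmath})_K$; I would prove it by induction on the length of $p$, using (\ref{Bc:2}), (\ref{Bd:8}) and the homotopy $b\sim(\overline{|b|\partial_0b})*(|b|\partial_1b)$ of \S\ref{Bba} (which yields $(|b|\partial_0b)*b\sim(|b|\partial_1b)$) together with (\ref{Bd:7}). This says precisely that $\e(\cA)$ generates the enveloping net bundle. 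Since any morphism of net bundles commutes with holonomy cocycles, if $(\chi,\rf)$ satisfies $\chi_o\circ\e_o=\psi_o$ then $\chi_o(p,A)=\imath_{\rf(p)}(\chi_{\partial_1p}(\e_{\partial_1p}(A)))=\imath_{\rf(p)}(\psi_{\partial_1p}(A))=\psi^\uparrow_o(p,A)$, so $\chi=\psi^\uparrow$ by continuity, proving (ii). The identical mechanism gives (i): from $(\varphi,\rh)\circ\e=(\theta,\rh)\circ\e$ the maps $\varphi$ and $\theta$ agree on every $\e_{\partial_1p}(A)$, hence on every $(p,A)=\overline{\jmath}_p(\e_{\partial_1p}(A))$, hence on all of $\overline{\cA}$, so $\varphi=\theta$. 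I expect the generation identity to be the main obstacle, as it is where the homotopy relations defining $\overline{\cA}_o$ are genuinely used to transport an arbitrary generator back to a trivial-loop generator.
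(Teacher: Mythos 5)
Your proof is correct and follows essentially the same route as the paper's: the same embedding $\e_o(A)=(\iota_o,A)$, the same formula $\psi^\uparrow_o(p,A)=\imath_{\rf(p)}\circ\psi_{\partial_1 p}(A)$ on generators, and the same generation identity $(p,A)=\overline{\jmath}_p\bigl(\e_{\partial_1 p}(A)\bigr)$ driving both uniqueness in (ii) and part (i) (the paper proves (i) first and deduces uniqueness from it, but the mechanism is identical). You are in fact more careful than the paper on one point it leaves implicit: since the seminorm (\ref{Bd:10}) is a supremum over morphisms defined over $K$ itself, your pullback $\rC^*$-net bundle $(\cB',\imath')_K$ with $\cB'_o:=\cB_{\rf(o)}$ is exactly what is needed to see that $\psi^\uparrow$ is contractive and hence descends to the completed fibres.
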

\begin{proof}
Given $o\in K$, let 
\begin{equation}
\label{Bd:9}
\e_o(A):= (\iota_o,A) \ , \qquad A\in{\cA_o} \ ,
\end{equation}
where $\iota_o$ is the trivial loop over $o$. 
Properties (\ref{Bd:3},\ref{Bd:4}, \ref{Bd:5})
imply that $\e_o:{\cA_o}\to \overline{\cA}_o$ is a unital ${^*}$-morphism. 
Moreover, for $\tilde o\leq o$ and $A\in\cA_{\tilde o}$ we have 
\[ 
\e_o \circ \jmath_{o\tilde o}(A)  = (\iota_o , \jmath_{o\tilde o}(A)) 
 = (\iota_o*(o\tilde o), A) \\
   = ((o \tilde o), A) 
 = \overline{\jmath}_{o\tilde o}(\iota_{\tilde o}, A) 
  = \overline{\jmath}_{o\tilde o} \circ \e_{\tilde o}(A) \ , 
\]
where isotony and homotopy invariance have been used. Thus the collection 
$\e:=\{\e_o, \ o\in K\}$ is a unital morphism from $(\cA,\jmath)_K$ into 
$(\overline{\cA},\overline{\jmath})_K$. Since, for any morphism $\overline{\pi}$ from $(\overline{\cA},\overline{\jmath})_K$ into 
a $\rC^*$-net bundle, the composition 
$\overline{\pi} \circ \e$ is a morphism from $(\cA,{\jmath})_K$ into a $\rC^*$-net bundle,
$\norm{\e_o(A)} = \sup_{\overline{\pi}} \norm{\overline{\pi}_o \circ \e_o(A)} \leq \sup_{\overline{\pi}}\norm{\overline{\pi}_o(A)}=\norm{A}$, 
so $\e_o$ extends by continuity to  all of $\cA_o$  proving the first part of the statement.\\
\indent $(i)$ Let $(\varphi,\rh), (\theta,\rh): (\overline{\cA},\overline{\jmath})_K\to (\cC,y)_P$ 
be a pair of morphisms as in the statement, where $(\cC,y)_P$ is a $\rC^*$-net bundle. 
Given $o\in K$, let  $p:a\to o$ and $A\in\cA_a$. Using the definition of $\e$,  and  of the 
inclusion maps (\ref{Bd:8}), 
\begin{align*}
 \varphi_o(p,A)& = (\varphi_o\circ \overline{\jmath}_p) (\io_a,A)  
   = (y_{\rh(p)}\circ \varphi_a)(\io_a,A)  \\
   & =  (y_{\rh(p)}\circ \varphi_a\circ \e_a)(A)  = (y_{\rh(p)}\circ \theta_a\circ \e_a)(A)
   =  \theta_o(p,A) \ .   
\end{align*}
So $\varphi_o=\theta_o$ because they coincide on the generators of $\overline{\cA}_o$.\\
\indent $(ii)$ Given a morphism $(\psi,\rf):(\cA,\jmath)_K\to (\cB,\imath)_S$, where  $(\cB,\imath)_S$ is a $\rC^*$-net bundle, define $(\psi^\uparrow,\rf)$ on the generators of $\overline{\cA}_o$ as follows,
\begin{equation}
\label{Bd:12}
\psi^\uparrow_o(p,A):= \imath_{\rf(p)} \circ \psi_a(A) \ , \qquad A\in {\cA_a}\ .
\end{equation}
It easily follows from this definition that  $\psi^\uparrow_o$ preserves isotony and homotopy invariance, and that 
$\psi^\uparrow_o(p,A)\in \cB_{\rf(o)}$ since $\psi_a:\cA_a\to\cB_{\rf(a)}$. 
Extend $\psi^\uparrow_o$ by multiplicativity and linearity to all of $\overline{\cA}_o$.
Note that 
\begin{align*}
 \psi^\uparrow_o \circ \overline{\jmath}_{o\tilde o}(q,A)  &  = 
 \psi^\uparrow_{o}((o\tilde o)*q,A)
  =  \imath_{\rf((o\tilde o)*q)} \circ \psi_a(A) \\
  & = {\imath}_{\rf(o)\rf(\tilde o)} \circ {\imath}_{\rf(q)} \circ \psi_{a}(A)
   = {\imath}_{\rf(o)\rf(\tilde o)} \circ \psi^\uparrow_{\tilde o}(q,A) \ ;
\end{align*}
moreover, $\psi^\uparrow_o\circ \e_o(A)= \psi^\uparrow_o (\iota_o,A) = \psi_o(A)$. 
Uniqueness follows from  $(i)$, completing the proof. 
\end{proof}
In the following we shall refer to the morphism $\e:(\cA,\jmath)_K\to 
(\overline{\cA},\overline{\jmath})_K$ defined by equation (\ref{Bc:9}) as the \emph{canonical embedding} of the net into 
its enveloping net bundle, and to   Prop. \ref{Bd:11} 
as the \emph{universal property} of the enveloping net bundle. This property 
characterizes the enveloping net bundle  up to isomorphism. In particular, it implies 
that any $\rC^*$-net bundle is isomorphic to its enveloping net bundle. \smallskip

%
%
%
%
\indent It is clear from the definition that the enveloping net bundle 
of a net may vanish, i.e., the seminorm (\ref{Bd:10}) may be zero for every $W$.
On these grounds we introduce the following terminology.
\begin{definition}
\label{Bd:14}
We say that a net of $\rC^*$-algebras is \textbf{degenerate} if its enveloping net bundle 
vanishes, and is \textbf{nondegenerate} otherwise. A nondegenerate 
net of $\rC^*$-algebras is \textbf{injective} if the canonical embedding is a monomorphism. 
\end{definition}
Injectivity is the central notion of the present paper. As we shall see in 
\S \ref{Cb}, injectivity for a net of $\rC^*$-algebras 
turns out to be equivalent to the existence of faithful representations. So from now 
on our main task shall be to understand what conditions on a net 
are  necessary or sufficient for injectivity.\\ 
\indent The next result shows that the enveloping net bundle is the 
object that we were looking for: 
an object uniquely associated with a net of $\rC^*$-algebras which takes into account 
the topology of the poset and reduces, in the simply connected case, 
to the universal algebra defined by Fredenhagen. 
\begin{lemma}
\label{Bd:17}
Given a net $(\cA,{\jmath})_K$.  If $K$ is simply connected, then there is a canonical isomorphism 
            \[
            \rho : (\overline{\cA},\overline{\jmath})_K \to (\cA^{t},\jmath^{t})_K
            \ ,
            \] 
            where $(\cA^{t},\jmath^{t})_K$ is the trivial net with fibre 
            Fredenhagen's universal $\rC^*$-algebra $\cA^\ru$. 
            In particular, 
            if $K$ is upward directed $\cA^u$ is isomorphic to the $\rC^*$-inductive limit 
            of $(\cA,{\jmath})_K$. 
\end{lemma}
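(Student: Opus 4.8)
The plan is to exploit that, over a simply connected poset, every $\rC^*$-net bundle is trivial (Corollary \ref{Bc:11}); thus the enveloping net bundle $(\overline{\cA},\overline{\jmath})_K$ is determined up to isomorphism by a single fibre, and the whole statement reduces to identifying this fibre with Fredenhagen's universal algebra $\cA^\ru$. Concretely, I would fix a pole $o\in K$ and a path frame $P_o$ and set $\tau_a:=\overline{\jmath}_{p_{(o,a)}}:\overline{\cA}_a\to\overline{\cA}_o$, in analogy with (\ref{Bc:9}). Because $K$ is simply connected the holonomy of $\overline{\jmath}$ is trivial, so $\tau_a$ is independent of the chosen path frame, and homotopy invariance of $\overline{\jmath}$ gives $\tau_{\tilde a}\circ\overline{\jmath}_{\tilde a a}=\tau_a$ for $a\leq\tilde a$. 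Hence $\tau$ is a canonical isomorphism of $(\overline{\cA},\overline{\jmath})_K$ onto the constant net bundle with fibre $\overline{\cA}_o$, and $\rho$ is obtained from $\tau$ once $\overline{\cA}_o\cong\cA^\ru$ is established.

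Next I would analyse the fibre. The composite $\mu_a:=\tau_a\circ\e_a:\cA_a\to\overline{\cA}_o$ sends $A\mapsto(p_{(o,a)},A)$ and, being a composition of net morphisms into a constant net bundle, satisfies the cocone relation $\mu_{\tilde a}\circ\jmath_{\tilde a a}=\mu_a$ for $a\leq\tilde a$; this is exactly isotony (\ref{Bd:6}) combined with homotopy invariance (\ref{Bd:7}). The key structural observation is that the images of the $\mu_a$ generate $\overline{\cA}_o$: any generator $(p,A)$, with $\partial_0p=o$ and $A\in\cA_{\partial_1p}$, satisfies $p\sim p_{(o,\partial_1p)}$ since $K$ is simply connected, whence $(p,A)=\mu_{\partial_1p}(A)$ by (\ref{Bd:7}).

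I would then verify that $(\overline{\cA}_o,\{\mu_a\})$ has the universal property defining $\cA^\ru$, namely universality among compatible families $\{\psi_a:\cA_a\to\rB\}$ of $^*$-morphisms satisfying $\psi_{\tilde a}\circ\jmath_{\tilde a a}=\psi_a$ (for $\rB=\Bh$ these are the Hilbert space representations of the net). Given such a family, I would regard $\rB$ as the fibre of the constant net bundle $(\rB^{t},\jmath^{t})_K$, so that $\psi$ becomes a net morphism; Proposition \ref{Bd:11}(ii) then supplies a unique factoring morphism $\psi^\uparrow:(\overline{\cA},\overline{\jmath})_K\to(\rB^{t},\jmath^{t})_K$ with $\psi=\psi^\uparrow\circ\e$. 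Setting $\Psi:=\psi^\uparrow_o$ and using that $\psi^\uparrow$ intertwines the cocycle $\overline{\jmath}$ with $\jmath^{t}=\mathrm{id}$, one gets $\Psi\circ\mu_a=\psi^\uparrow_a\circ\e_a=\psi_a$, and uniqueness of $\Psi$ follows from the generation observation above. By uniqueness of universal objects this gives a canonical isomorphism $\overline{\cA}_o\cong\cA^\ru$ intertwining the two cocones, which together with $\tau$ produces $\rho$. I expect the crux of the argument to be exactly this bridge between two a priori different universal properties: the enveloping net bundle is universal among morphisms into $\rC^*$-net bundles, while $\cA^\ru$ is universal among compatible families into a single algebra, and it is Corollary \ref{Bc:11} that collapses the former notion onto the latter.

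Finally, for the ``in particular'' clause, I would recall that upward directed posets are simply connected, so the preceding applies; moreover, for upward directed $K$ a compatible family $\{\psi_a:\cA_a\to\rB\}$ is precisely a cone over the inductive system $(\cA,\jmath)_K$. Hence the universal property just established for $\cA^\ru$ is verbatim that of the $\rC^*$-inductive limit $\varinjlim(\cA,\jmath)_K$, and the two algebras are canonically isomorphic.
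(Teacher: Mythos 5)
Your proof is correct, but it takes a genuinely different route from the paper's. The paper's own argument is a two-line presentation matching: since $K$ is simply connected, any two paths $p,q: a \to o$ are homotopic, so by homotopy invariance (\ref{Bd:7}) a generator $(p,A)$ of $\overline{\cA}_o$ depends only on $a$ and $A$; hence the generators and the relations (\ref{Bd:3})--(\ref{Bd:6}) of the fibre $\overline{\cA}_o$ are literally those of Fredenhagen's presentation of $\cA^\ru$, and $\rho$ is defined directly at the $^*$-algebraic level. You instead trivialize $(\overline{\cA},\overline{\jmath})_K$ via Corollary \ref{Bc:11}, reduce to the single fibre $\overline{\cA}_o$, and then characterize that fibre by a universal property, using Proposition \ref{Bd:11}(ii) to bridge between universality among morphisms into $\rC^*$-net bundles and Fredenhagen's universality among compatible families into a single algebra (both arguments ultimately hinge on the same key fact, namely $(p,A)=(q,A)$ for paths with common endpoints). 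Your longer route buys two things the paper leaves implicit: first, it handles the $\rC^*$-completions cleanly --- the seminorm (\ref{Bd:10}) is a supremum over morphisms into net bundles, while the norm on $\cA^\ru$ is a supremum over Hilbert space representations, and matching generators and relations ``at the $^*$-algebraic level'' does not by itself show that these two enveloping norms coincide, whereas your universal-property argument identifies the completed algebras automatically; second, it actually proves the final clause on inductive limits, which the paper merely asserts. What the paper's approach buys is brevity: the isomorphism is visible immediately once one observes the collapse of the path dependence of the generators.
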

\begin{proof}
$K$ being simply connected, $\rho$ is defined at the $^*$-algebraic
level observing that: 
(1) the generators of $\overline{\cA}_o$ are the same as the generators 
of $\cA^\ru$, since $(p,A)=(q,A)$ for any pair of paths $p,q:a\to o$;
(2) the relations (\ref{Bd:3})-(\ref{Bd:6}) are the same as those 
defining $\cA^\ru$ (see \cite{Fre}).
\end{proof}
%
%
%
%
%
%
%

\begin{remark}
\label{Bd:18}
 Let $G$ be a symmetry group for $K$ and $(\cA,\jmath)_K$ a $G$-covariant net 
(see \S \ref{Ba}). Define, for any  $p:a\to o$ and  $A\in\cA_a$,
\begin{equation}
\label{net:env:14a}
\overline{\alpha}_o^g (p,A) := (gp,\alpha^g_a(A)) \ , \qquad  g\in G
\ .
\end{equation}
%
Then it is easily seen that $\overline{\alpha}$ yields an action on 
$(\overline{\cA},\overline{\jmath})_K$, making it
$G$-covariant, and that $\e$ is a 
$G$-covariant morphism. 
Furthermore, assume that $G$ is a continuous symmetry group
of $K$ (in the sense of (\ref{Ab:1})), and that the action $\alpha$ of $G$ on $(\cA,\jmath)_K$ 
is continuous (see \S \ref{Ba}). The action $\overline{\alpha}$ of $G$ on 
the enveloping net bundle is continuous too. 
It is enough to prove this on the generators of the fibres of the enveloping net bundle. 
Consider a net $\{g_\lambda\}$ converging to the identity of the group $G$. 
Given $o$,  let $\tilde o >  o$. For any $p:a\to o$ and $A\in\cA_a$ we have,
%
%
using  
$(p,A) = \overline{\jmath}_p \circ \e_a (A)$
and (\ref{net:env:14a}),
\begin{align*}
\| \overline{\jmath}_{\hat o\, g_\lambda o}\circ \overline{\alpha}^{g_\lambda}_o(p,A)- 
   \overline{\jmath}_{\hat o\, o}\circ (p,A) \| =& 
  \| \overline{\jmath}_{\hat o\, g_\lambda o}\circ \overline{\alpha}^{g_\lambda}_o\circ \overline{\jmath}_p \circ \e_a (A) - 
   \overline{\jmath}_{\hat o\, o}\circ \overline{\jmath}_p \circ \e_a (A) \| \\
   = &   \| \overline{\jmath}_{\hat o\, g_\lambda o}\circ \overline{\jmath}_{gp}\circ 
    \overline{\alpha}^{g_\lambda}_a\circ \e_a (A) - 
   \overline{\jmath}_{\hat o\, o}\circ \overline{\jmath}_p \circ \e_a (A) \| \\
   = &   \| \overline{\jmath}_{\hat o\, g_\lambda o}\circ \overline{\jmath}_{gp}\circ 
   \e_{g_\lambda a}\circ \alpha^{g_\lambda}_a\ (A) - 
   \overline{\jmath}_{\hat o\, o}\circ \overline{\jmath}_p \circ \e_a (A) \| \ . 
\end{align*}
Now, take $\hat a >a$. Since $g_\lambda a$ are eventually smaller than  $\hat a$,  
using the above  relation we have 
%
%
%
\begin{align*}
\| \overline{\jmath}_{\hat o\, g_\lambda o}\circ & \overline{\alpha}^{g_\lambda}_o(p,A)- 
    \overline{\jmath}_{\hat o\, o}\circ (p,A) \| = \\    
  & =  \| \overline{\jmath}_{\hat o\, g_\lambda o}\circ \overline{\jmath}_{gp}\circ 
   \e_{g_\lambda a}\circ \alpha^{g_\lambda}_a\ (A) - 
   \overline{\jmath}_{\hat o\, o}\circ \overline{\jmath}_p \circ \e_a (A) \| \\
   & = 
     \| \overline{\jmath}_{(\hat o, g_\lambda o)* gp * \overline{(\hat a,g_\lambda a)}}\circ 
               \overline{\jmath}_{\hat a \, g_\lambda a}\circ  
   \e_{g_\lambda a}\circ \alpha^{g_\lambda}_a\ (A) - 
   \overline{\jmath}_{(\hat o, o)*p* \overline{(\hat a,a)}}\circ  \overline{\jmath}_{\hat a\, a}\circ \e_a (A) \| \\
     & = 
     \| \overline{\jmath}_{(\hat o, g_\lambda o)* gp * \overline{(\hat a,g_\lambda a)}}\circ 
               \e_{\hat a} \circ  \jmath_{\hat a \, g_\lambda a}\circ  \alpha^{g_\lambda}_a\ (A) - 
   \overline{\jmath}_{(\hat o, o)*p* \overline{(\hat a,a)}}\circ  \e_{\hat a} \circ \jmath_{\hat a\, a}(A) \| \ .
\end{align*}
By continuity of the $G$-action on $K$ (see (\ref{Ab:1})), the paths 
$(\hat o, g_\lambda o)* gp * \overline{(\hat a,g_\lambda a)}$ and 
$(\hat o, o)*p* \overline{(\hat a,a)}$ are homotopic.  So that 
\begin{align*}
\| \overline{\jmath}_{\hat o\, g_\lambda o}\circ & \overline{\alpha}^{g_\lambda}_o(p,A)- 
    \overline{\jmath}_{\hat o\, o}\circ (p,A) \| = \\
&  =     \| \overline{\jmath}_{(\hat o, o)*p* \overline{(\hat a,a)}}\circ 
               \e_{\hat a} \circ  \jmath_{\hat a \, g_\lambda a}\circ  \alpha^{g_\lambda}_a\ (A) - 
   \overline{\jmath}_{(\hat o, o)*p* \overline{(\hat a,a)}}\circ  \e_{\hat a} \circ \jmath_{\hat a\, a}(A) \| \\
&  \leq     \|  \jmath_{\hat a \, g_\lambda a}\circ  \alpha^{g_\lambda}_a\ (A) - \jmath_{\hat a\, a}(A) \| \ ,    
\end{align*}
which goes to zero as $g_\lambda\to e$ because of the continuity of $\alpha$. 
This proves that $\overline{\alpha}$ is continuous.
\end{remark}
We have just seen that if a net is covariant then the enveloping net bundle is covariant too. 
A property of a net not in general inherited by the enveloping 
net bundle is causality.  This is clear from the definition,  
since all fibres of the net are involved in  defining 
a single fibre of the enveloping net bundle.\\
\indent We conclude by showing the stability of injectivity  under 
morphisms faithful on the fibres (see \S \ref{Ba}), and  the functoriality 
of the enveloping net bundle. 

\begin{proposition}
\label{Bd:19}
The following assertions hold. 
\begin{itemize}
\item[(i)] To any morphism $(\pi,\rf):(\cA,\jmath)_K\to (\cB,\imath)_P$ 
there corresponds a morphism $(\overline{\pi},\rf):(\overline{\cA},\overline{\jmath})_K\to (\overline{\cB},\overline{\imath})_P$ satisfying 
\begin{equation}
\label{Bd:19a}
 (\overline{\pi},\rf)\circ \e = \tilde \e \circ (\pi,\rf) \ , 
\end{equation}
where $\e$ and $\tilde e$ are, respectively, the canonical embeddings of 
the nets  $(\cA,\jmath)_K$ and $(\cB,\imath)_P$ into the corresponding enveloping net bundles.
If  $(\pi,\rf)$ is faithful on the fibres and $(\cB,\imath)_P$ is injective, 
then $(\cA,\jmath)_K$ is injective too. 
\item[(ii)] Assigning the  enveloping net bundle yields a functor 
from the category of net of $\rC^*$-algebras to the category 
of $\rC^*$-net bundles.
\end{itemize}
\end{proposition}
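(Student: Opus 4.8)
The plan is to obtain $\overline\pi$ directly from the universal property of the enveloping net bundle (Proposition \ref{Bd:11}) and then to read off both the faithfulness claim and functoriality from the uniqueness clauses of that proposition, rather than by computing on generators.

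First I would treat part (i). The composite $\tilde\e\circ(\pi,\rf):(\cA,\jmath)_K\to(\overline{\cB},\overline{\imath})_P$ is a morphism from the net $(\cA,\jmath)_K$ into a $\rC^*$-net bundle, whose underlying poset map is $\rf$ (the identity map of $P$ composed with $\rf$). Applying Proposition \ref{Bd:11}(ii) to this morphism produces a unique morphism $(\overline\pi,\rf):(\overline{\cA},\overline{\jmath})_K\to(\overline{\cB},\overline{\imath})_P$ with $(\overline\pi,\rf)\circ\e=\tilde\e\circ(\pi,\rf)$, which is exactly (\ref{Bd:19a}); unwinding (\ref{Bd:12}) would moreover give the explicit formula $\overline\pi_o(p,A)=(\rf(p),\pi_{\partial_1 p}(A))$ on generators, although this is not needed. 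For the faithfulness assertion, assume $(\pi,\rf)$ is faithful on the fibres and $(\cB,\imath)_P$ is injective, so each $\tilde\e_{o'}$ is injective. Reading (\ref{Bd:19a}) on the fibre over $o$ yields $\overline\pi_o\circ\e_o=\tilde\e_{\rf(o)}\circ\pi_o$, whence $\e_o(A)=0$ forces $\tilde\e_{\rf(o)}(\pi_o(A))=0$, then $\pi_o(A)=0$ by injectivity of $\tilde\e_{\rf(o)}$, and finally $A=0$ by faithfulness of $\pi_o$. Thus every $\e_o$ is injective; since the poset map of $\e$ is $\mathrm{id}_K$ and each $\cA_o$ is unital, $\e_o(\mathbbm{1})=\mathbbm{1}\neq 0$ makes $(\overline{\cA},\overline{\jmath})_K$ nondegenerate, so $\e$ is a monomorphism and $(\cA,\jmath)_K$ is injective.

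For part (ii) I would check the two functor axioms. Preservation of identities is immediate from the explicit formula: for $(\mathrm{id},\mathrm{id}_K)$ one gets $\overline{\mathrm{id}}_o(p,A)=(p,A)$, so $\overline{\mathrm{id}}=\mathrm{id}$. For composition, take $(\pi,\rf):(\cA,\jmath)_K\to(\cB,\imath)_P$ and $(\psi,\rg):(\cB,\imath)_P\to(\cC,y)_S$, with canonical embeddings $\e$, $\tilde\e$, $\e''$ respectively. The two morphisms $(\overline\psi,\rg)\circ(\overline\pi,\rf)$ and $(\overline{\psi\circ\pi},\rg\circ\rf)$ share the same poset map $\rg\circ\rf$, so by the uniqueness clause Proposition \ref{Bd:11}(i) it suffices to check that they agree after precomposition with $\e$. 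Using (\ref{Bd:19a}) three times,
\[
(\overline\psi\circ\overline\pi)\circ\e=\overline\psi\circ(\tilde\e\circ\pi)=(\overline\psi\circ\tilde\e)\circ\pi=(\e''\circ\psi)\circ\pi=\e''\circ(\psi\circ\pi),
\]
while $(\overline{\psi\circ\pi})\circ\e=\e''\circ(\psi\circ\pi)$ by (\ref{Bd:19a}) applied to $\psi\circ\pi$; uniqueness then gives $(\overline\psi,\rg)\circ(\overline\pi,\rf)=(\overline{\psi\circ\pi},\rg\circ\rf)$.

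The routine parts are the diagram chase for faithfulness and the identity axiom. The only point demanding care is that Proposition \ref{Bd:11} is invoked across different posets, so one must keep the underlying poset maps ($\rf$, $\rg$, and their composite) straight throughout, and must use the epimorphism property (i)—rather than a computation on generators—to make the composition argument clean. No genuinely new construction or estimate is required beyond the universal property already established.
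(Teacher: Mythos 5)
Your proof is correct, but it takes a genuinely different route from the paper's. The paper constructs $(\overline{\pi},\rf)$ by hand on generators, setting $(\overline{\pi},\rf)_o(p,A) := (\rf(p),\pi_a(A))$, and then verifies directly that this assignment respects isotony and homotopy invariance, intertwines the inclusion maps, and satisfies (\ref{Bd:19a}); functoriality is then read off from the same explicit formula. You instead invoke the universal property: since $\tilde\e\circ(\pi,\rf)$ is a morphism from $(\cA,\jmath)_K$ into the $\rC^*$-net bundle $(\overline{\cB},\overline{\imath})_P$ with underlying poset map $\rf$, Proposition \ref{Bd:11}(ii) produces $(\overline{\pi},\rf)$ together with (\ref{Bd:19a}) at no extra cost, and the composition law follows from the epimorphism property of $\e$ (Proposition \ref{Bd:11}(i)) applied to two morphisms sharing the poset map $\rg\circ\rf$ --- a point you rightly flag, since that uniqueness clause is stated only for morphisms with a common poset map. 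The faithfulness arguments are the same diagram chase (you spell out what the paper compresses into ``composition of faithful morphisms'', and you are in fact slightly more careful than the paper about nondegeneracy, which Definition \ref{Bd:14} requires before injectivity can even be asserted). What your approach buys is economy and robustness: no re-verification of well-definedness on generators, and the argument is insensitive to the concrete presentation of $\overline{\cA}_o$. What the paper's approach buys is the explicit formula (\ref{Bd:20}) for the induced morphism, which you correctly observe can be recovered from (\ref{Bd:12}) anyway. One cosmetic remark: you justify preservation of identities ``from the explicit formula'', which is slightly at odds with your stated plan of avoiding generator computations; it follows just as cleanly from uniqueness in Proposition \ref{Bd:11}(ii), since $\mathrm{id}_{(\overline{\cA},\overline{\jmath})_K}$ satisfies the relation defining $\overline{\mathrm{id}}$.
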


\begin{proof}
$(i)$ Given $o\in K$, define
\begin{equation}
\label{Bd:20}
 (\overline{\pi},\rf)_o(p,A) := (\rf(p),\pi_a(A))  
 \ , \qquad 
 (p,A) \in \overline{\cA}_o \ . 
\end{equation}
Clearly $(\overline{\pi},\rf)_o(p,A)\in \overline{\cB}_{\rf(o)}$. 
To prove that (\ref{Bd:20}) is well defined, we consider $a \leq \tilde a$
and compute
\begin{align*}
(\overline{\pi},\rf)_o (p*(\tilde aa),A) & 
 = (\rf(p*(\tilde aa)),\pi_a(A)) 
  = (\rf(p)*\rf(\tilde aa),\pi_a(A)) \\
& = (\rf(p),\imath_{\rf(\tilde a) \rf(a)} \circ \pi_a(A))
  = (\rf(p),\pi_{\tilde a} \circ \jmath_{\tilde aa} (A) ) \\
& = (\overline{\pi},\rf)_o (p,{\jmath}_{\tilde aa}(A)) \ ; 
\end{align*}
thus (\ref{Bd:20}) is well-defined at the level of isotony.
Passing to homotopy invariance, we note that if $p,q:a\to o$ are homotopic
then $\rf(p)$ is homotopic to $ \rf(q)$, and we have 
$(\rf(p),\pi_a(A)) = (\rf(q),\pi_a(A))$.
This proves that (\ref{Bd:20}) is well posed.  
If $o\leq \tilde o$, then by homotopy invariance
\begin{align*}
 \overline{\imath}_{\rf(\tilde o) \rf(o)} \circ (\overline{\pi},\rf)_o  (p,A) 
& = ( (\rf(\tilde o) \rf(o))*\rf(p),\pi_a(A) ) 
 =  ( \rf_*(\tilde oo)*p , \pi_a(A) ) \\
& = (\overline{\pi},\rf)_{\tilde o} ((\tilde oo)*p,A)
 =  (\overline{\pi},\rf)_{\tilde o} \circ \overline{\jmath}_{\tilde oo} (p,A) \ . 
\end{align*}
Observing that  
\[
(\overline{\pi},\rf)_o\circ \e_o(A) = (\overline{\pi},\rf)_o (\io_o,A)= 
(\io_{\rf(o)}, \pi_{o}(A)) = 
\tilde\e_{\rf(o)}(\pi_{o}(A))=  \tilde\e_{\rf(o)}\circ (\pi,\rf)_o(A)  \ , 
\]
for any $o$ and $A\in\cA_o$, equation (\ref{Bd:19a}) follows. Finally, 
if  $(\pi,\rf)$ is faithful on the fibres and $(\cB,\imath)_P$ is injective, 
the r.h.s. of the above equation is the  composition 
of faithful morphisms for any $o$; so,  $\e$ is a monomorphism. $(ii)$ Clearly 
$(\overline{\pi'\circ\pi},\rf'\circ\rf)  =
  (\overline{\pi'},\rf') \circ (\overline{\pi},\rf)$,
and this concludes the proof.
\end{proof}
This proposition and Prop.\ref{Bbc:1} imply that 
the net  $(\rC^*(\Lambda^l),\rC^*(\lambda))_K$,  given  in  \S \ref{Bbc},  
is injective.  



\section{States and representations}
\label{C}
We study states and representations of nets of $\rC^*$-algebras providing  
sufficient conditions for the existence of states (representations), and  of 
invariant  states (covariant representations) when the poset 
is endowed with a symmetry group. Furthermore, we show that for a net of $\rC^*$-algebras,  
injectivity is equivalent to the existence of faithful representations. 
Aspects of the decomposition of representations are studied in Appendix \ref{AppRep}.

%
%
\subsection{States}
\label{Ca}
We relate states of a $\rC^*$-net bundle to those of the corresponding 
holonomy dynamical system. This will allow us to prove that, when the homotopy group 
of the underlying poset is amenable, any nondegenerate net has states, which 
are invariant whenever the net is  covariant under an amenable 
symmetry group.\bigskip 

A \emph{state}
of a net of $\rC^*$-algebras $(\cA,{\jmath})_K$  is a family 
$\omega := \{ \omega_o \ , o\in K \}$, 
where $\omega_o$ is a state of the $\rC^*$-algebra $\cA_o$, fulfilling the relation
\begin{equation}
\label{Ca:1}
\omega_o = \omega_{a} \circ {\jmath}_{ao}
\ \ , \ \ 
o \leq a
\ .
\end{equation}
We shall denote the set of states of $(\cA,{\jmath})_K$ by 
$\cS(\cA,{\jmath})_K$. \bigskip

It is easily seen that if $(\phi,\rf):(\cB,\imath)_P\to (\cA,{\jmath})_K$ is a unital morphism 
and $\omega$ is a  state of $(\cA,{\jmath})_K$, then the composition $\omega\circ\phi$ defined by 
\begin{equation}
\label{Ca:2}
(\omega\circ\phi)_o = \omega_{\rf(o)} \circ \phi_o \ , \qquad o\in P \ , 
\end{equation}
yields a state of the net  $(\cB,\imath)_P$. Another property easy to verify 
is the following. When $(\cA,{\jmath})_K$ is a $\rC^*$-net bundle then, by (\ref{Ca:1}) 
\begin{equation}
\label{Ca:3}
\omega_a = \omega_{o} \circ {\jmath}_{p} \ , \qquad  p:a\to o \ . 
\end{equation}
Our first result  relates states of a $\rC^*$-net bundle to 
invariant states of the corresponding holonomy dynamical system. 
\begin{lemma}
\label{Ca:4}
The set of states of a $\rC^*$-net bundle is in one-to-one correspondence with 
the set of invariant states of the associated holonomy dynamical system.  \end{lemma}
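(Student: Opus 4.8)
The plan is to realise the bijection by restricting a state of the bundle to the base fibre and, conversely, reconstructing a state from an invariant state by transporting it along paths; the invariance condition will be exactly what makes the reconstruction consistent. Fix the base point $o$ entering the holonomy dynamical system $(\cA_*,\pi_1^o(K),\jmath_*)$ of (\ref{Bc:4}), so that $\cA_*=\cA_o$. First I would send a state $\omega=\{\omega_a\}$ of the net bundle to its component $\omega_o$. That $\omega_o$ is invariant follows at once by applying (\ref{Ca:3}) to a loop $p:o\to o$, which gives $\omega_o=\omega_o\circ\jmath_p=\omega_o\circ\jmath_{*,[p]}$; this is precisely invariance under the $\pi_1^o(K)$-action.

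In the other direction, given an invariant state $\phi$ of $(\cA_*,\pi_1^o(K),\jmath_*)$, I would choose for each $a\in K$ a path $r_a:a\to o$ and set $\omega_a:=\phi\circ\jmath_{r_a}$; since $\jmath_{r_a}:\cA_a\to\cA_o$ is a $^*$-isomorphism, each $\omega_a$ is a state of $\cA_a$. The main point — and the only place where invariance is genuinely used — is to show that $\omega_a$ does not depend on the chosen path. For two paths $p,q:a\to o$, the loop $\ell:=p*\overline{q}$ at $o$ satisfies $\jmath_\ell=\jmath_p\circ\jmath_{\overline q}=\jmath_p\circ\jmath_q^{-1}=\jmath_{*,[\ell]}$ by homotopy invariance of the cocycle $\jmath$, so invariance of $\phi$ gives $\phi\circ\jmath_p=\phi\circ\jmath_q$. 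This is the step I expect to be the crux; everything else is bookkeeping.

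It then remains to check three routine points. To see that $\{\omega_a\}$ satisfies the defining relation (\ref{Ca:1}), for $\tilde o\leq a$ I would use that the nerve $1$-simplex $(a\tilde o)$ has $\jmath_{(a\tilde o)}=\jmath_{a\tilde o}$ and that $r_a*(a\tilde o):\tilde o\to o$ is an admissible path for $\tilde o$, so that $\omega_a\circ\jmath_{a\tilde o}=\phi\circ\jmath_{r_a}\circ\jmath_{(a\tilde o)}=\phi\circ\jmath_{r_a*(a\tilde o)}=\omega_{\tilde o}$. This exhibits $\{\omega_a\}$ as a genuine state of the net bundle.

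Finally I would verify that the two assignments are mutually inverse. Restricting the reconstruction of $\phi$ to the base fibre returns $\omega_o=\phi\circ\jmath_{r_o}$ for a loop $r_o:o\to o$, which equals $\phi$ by invariance (or by taking $r_o$ trivial). Conversely, reconstructing from $\omega_o$ gives $\phi\circ\jmath_{r_a}=\omega_o\circ\jmath_{r_a}=\omega_a$ directly from (\ref{Ca:3}). Both composites being the identity, the correspondence $\omega\mapsto\omega_o$ is the desired bijection onto the invariant states of the holonomy dynamical system.
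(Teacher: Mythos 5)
Your proposal is correct and follows essentially the same strategy as the paper's proof: restriction to the fibre over $o$ in one direction, and path-transport of the invariant state (with invariance of the state providing exactly the path-independence) in the other. The only cosmetic difference is that you transport directly with the extended cocycle $\jmath_p$ inside $(\cA,\jmath)_K$, whereas the paper first defines the state on the associated net bundle $(\cA_{**},\jmath_{**})_K$ via a path frame and then composes with the isomorphism $\tau$ of (\ref{Bc:9}); since $\tau_a = \jmath_{p_{(o,a)}}$, the two constructions coincide.
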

\begin{proof}
Consider the holonomy dynamical system $(\cA_*,\pi^o_1(K),\jmath_*)$  defined 
with respect to $o\in K$ (\ref{Bc:4}).
Let $\omega$ be a state of a $\rC^*$-net bundle $(\cA,{\jmath})_K$.  Then  $\omega_*:=\omega_o$ is an invariant state of $(\cA_*,\pi^o_1(K),\jmath_*)$. 
In fact by (\ref{Ca:3}) we have that 
\[
\omega_*\circ \jmath_{*,[p]}= \omega_o\circ \jmath_{*,[p]}=  \omega_o \circ {\jmath}_p = \omega_o =\omega_*
\ , 
\]
for any $[p]\in\pi_1^o(K)$. 
Conversely, let $\varphi$ be an invariant state 
of $(\cA_*,\pi_1^o(K),\jmath_*)$. Take a path frame $P_o:=\{p_{(a,o)}, \ a\in K\}$ and define a state on the associated net bundle
$(\cA_{**},\jmath_{**})_K$ (see \S \ref{Bc}) by 
\[
\varphi_{*,a}:= 
\varphi\circ \jmath_{**,p} \stackrel{(\ref{Bc:7})}{=} 
\varphi \circ \jmath_{*, [p_{(o, o)}*p*p_{(a,o)}] } = \varphi \circ \jmath_{*, [p*p_{(a, o)}]}
\ , \qquad a\in K \ , 
\] 
for some path $p:a\to o$, where the fact that $p_{(o,o)}$ is homotopic to the trivial loop
over $o$ has been used.  Since $\varphi$ is $\jmath_*$-invariant for every $q : a\to o$ we have
$\varphi_{*,a}= \varphi\circ \jmath_{**,p}= 
\varphi\circ \jmath_{**,p*\overline{q}}\circ \jmath_{**,q} = \varphi\circ\jmath_{**,q}$,
so that the family $\varphi_*:=\{ \varphi_{*,a} \}$ is well-defined (note in particular 
that $\varphi_{*,o}=\varphi$). For the same reason we have 
\[
\varphi_{*,a} \circ \jmath_{**,a\tilde a} =
\varphi\circ \jmath_{**,p}\circ \jmath_{**,a\tilde a} = 
\varphi\circ \jmath_{**,p*(a,\tilde a)} = \varphi_{*,\tilde a}
\]
for any  $\tilde a \leq a$, thus $\varphi_* \in \cS(\cA_{**},\jmath_{**})_K$.
Composing $\varphi_*$ with the isomorphism 
$\tau:(\cA,\jmath)_K\to (\cA_{**},\jmath_{**})_K$
defined by the equation (\ref{Bc:9}) yields a state of $(\cA,\jmath)_K$. \smallskip  

Finally,  we prove that these mappings are the inverse of one another. Given 
a state $\omega$ of $(\cA,\jmath)_K$,  we have 
\[
(\omega_{**}\circ \tau)_a = \omega_{**,a}\circ \tau_a =
\omega_{*} \circ \jmath_{**,p} \circ \tau_a =
\omega_{*} \circ \tau_o \circ  \jmath_{p} = 
\omega_o \circ \jmath_{p} = \omega_a \ , 
\]  
for some path $p:a\to o$, where we have used  the fact that $\tau_o=\mathrm{id}_o$ (see definition 
\ref{Bc:9}) and the equation (\ref{Ca:3}). Conversely, if $\varphi$ is a state  of the holonomy dynamical system, then  
$(\varphi_{*}\circ\tau)_* = (\varphi_{*}\circ\tau)_o = 
 \varphi_{*,o}\circ\tau_o = \varphi_{*,o}$, 
because, as observed above,  $\varphi_{*,o}=\varphi$ completing the proof. 
\end{proof}
We now are ready to give the main result on the existence of states for nets of $\rC^*$-algebras. 
\begin{proposition}
\label{Ca:5}
Let $K$ be a poset with amenable homotopy group. 
Then any nondegenerate net of $\rC^*$-algebras over $K$ has states.
\end{proposition}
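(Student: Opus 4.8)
The plan is to reduce the existence of a state on $(\cA,\jmath)_K$ to the existence of an invariant state on a holonomy dynamical system, where amenability can be brought to bear, and then to transport that state back to the original net. First I would use nondegeneracy: by Definition \ref{Bd:14} the enveloping net bundle $(\overline{\cA},\overline{\jmath})_K$ does not vanish. Since $(\overline{\cA},\overline{\jmath})_K$ is a $\rC^*$-net bundle, all inclusion maps $\overline{\jmath}$ are ${^*}$-isomorphisms, and as $K$ is pathwise connected the isomorphisms $\overline{\jmath}_p$ identify all fibres; hence every $\overline{\cA}_o$ is a \emph{nonzero} unital $\rC^*$-algebra and in particular has nonempty state space.

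Next I would fix a pole $o\in K$ and pass to the holonomy dynamical system $(\overline{\cA}_*,\pi_1^o(K),\overline{\jmath}_*)$ associated with $(\overline{\cA},\overline{\jmath})_K$ as in (\ref{Bc:4}), where $\overline{\cA}_*=\overline{\cA}_o$ is the nonzero unital $\rC^*$-algebra just produced, acted upon by $\pi_1^o(K)\cong\pi_1(K)$, which by hypothesis is amenable. The crucial step is to exhibit a $\pi_1^o(K)$-invariant state on $\overline{\cA}_*$. I would observe that the state space $\cS(\overline{\cA}_*)$ is a nonempty, weak-$*$ compact, convex subset of the dual of $\overline{\cA}_*$, and that each $[p]\in\pi_1^o(K)$ acts on it by the affine weak-$*$ homeomorphism dual to the automorphism $\overline{\jmath}_{*,[p]}$; the fixed points of this affine action are precisely the states $\varphi$ with $\varphi\circ\overline{\jmath}_{*,[p]}=\varphi$ for all $[p]$, that is, the invariant states. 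Amenability of $\pi_1(K)$, in the form of Day's fixed-point theorem for affine actions of amenable groups on compact convex sets, then yields such a fixed point.

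With the invariant state in hand, Lemma \ref{Ca:4} converts it into a state $\overline{\omega}$ of the $\rC^*$-net bundle $(\overline{\cA},\overline{\jmath})_K$. Finally I would pull $\overline{\omega}$ back along the canonical embedding $\e:(\cA,\jmath)_K\to(\overline{\cA},\overline{\jmath})_K$ of Proposition \ref{Bd:11}: since $\e$ is a unital morphism, the composition $\overline{\omega}\circ\e$, defined fibrewise as in (\ref{Ca:2}), is a genuine state of $(\cA,\jmath)_K$, which completes the argument.

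The only place where the hypotheses are genuinely used -- and hence the main obstacle -- is the production of the invariant state from amenability; the remaining steps are a routine transport through equivalences already established in \S\ref{Bc}, \S\ref{Bd} and Lemma \ref{Ca:4}. One technical point deserving a line of care is verifying that the induced action of the discrete group $\pi_1^o(K)$ on $\cS(\overline{\cA}_*)$ satisfies the continuity hypotheses of the fixed-point theorem, but this is immediate since each $\overline{\jmath}_{*,[p]}$ is a ${^*}$-automorphism and therefore has weak-$*$ continuous adjoint.
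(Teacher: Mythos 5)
Your proposal is correct and follows essentially the same route as the paper: reduce to the enveloping $\rC^*$-net bundle via the canonical embedding and (\ref{Ca:2}), convert the problem to invariant states of the holonomy dynamical system via Lemma \ref{Ca:4}, and invoke amenability of $\pi_1(K)$ to produce an invariant state. The paper cites the existence of invariant states for $\rC^*$-dynamical systems with amenable groups as a known fact, whereas you unpack it via Day's fixed-point theorem and also make explicit the (implicitly used) point that nondegeneracy guarantees nonzero fibres; these are welcome clarifications, not deviations.
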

\begin{proof}
Since a nondegenerate net as a nonvanishing enveloping net bundle 
it is enough, by (\ref{Ca:2}), to prove the statement when  
$(\cA,{\jmath})_K$ is a $\rC^*$-net bundle. This follows by the previous lemma, 
since any $\rC^*$-dynamical system 
with an amenable group has invariant states.  
%
%
\end{proof}

Let now $(\cA,\jmath,\alpha)_K$ be a $G$-covariant net. A state $\varphi \in \cS(\cA,\jmath)_K$
is said to be $G$-\emph{invariant} whenever
\[
\varphi_{go} \circ \alpha^g_o := \varphi_o
\ \ , \ \
\forall o \in K
\ , \
g \in G
\ .
\]
The next result gives conditions for the existence of $G$-invariant states.
\begin{proposition}
\label{Ca:6} 
Let $G$ be an amenable group. Then the following assertions hold:
\begin{itemize}
\item[(i)]  Any $G$-covariant $\rC^*$-net bundle having states has $G$-invariant states. 
\item[(ii)] If the fundamental group of $K$ is amenable, then any nondegenerate $G$-covariant net 
over $K$ has $G$-invariant states.
\end{itemize}
\end{proposition}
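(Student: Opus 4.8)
The plan is to establish (i) by a direct averaging argument and then to reduce (ii) to (i) by passing to the enveloping net bundle. For (i), let $(\cA,\jmath,\alpha)_K$ be a $G$-covariant $\rC^*$-net bundle admitting a state, fix $\omega\in\cS(\cA,\jmath)_K$, and choose a right-invariant mean $m$ on $G$, which exists precisely because $G$ is amenable. For each $o\in K$ and $A\in\cA_o$ the function $g\mapsto\omega_{go}(\alpha^g_o(A))$ is bounded by $\norm{A}$, and I would define
\[
\varphi_o(A):=m\big(\,g\mapsto\omega_{go}(\alpha^g_o(A))\,\big).
\]

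First I would check that each $\varphi_o$ is a state of $\cA_o$: linearity and positivity pass through the mean (if $A\geq 0$ then $\alpha^g_o(A)\geq 0$, since $\alpha^g_o$ is a ${}^*$-morphism, so the integrand is pointwise positive), while $\varphi_o(\mathbbm{1})=m(g\mapsto 1)=1$. Next I would verify the compatibility (\ref{Ca:1}): for $o\leq a$, the covariance relation $\alpha^g_a\circ\jmath_{ao}=\jmath_{ga\,go}\circ\alpha^g_o$ followed by the state relation $\omega_{ga}\circ\jmath_{ga\,go}=\omega_{go}$ collapses the integrand of $\varphi_a(\jmath_{ao}(A))$ pointwise to $\omega_{go}(\alpha^g_o(A))$, so that $\varphi:=\{\varphi_o\}\in\cS(\cA,\jmath)_K$. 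Finally, for $G$-invariance, the cocycle relation $\alpha^g_{ho}\circ\alpha^h_o=\alpha^{gh}_o$ together with $g(ho)=(gh)o$ gives $\varphi_{ho}(\alpha^h_o(A))=m\big(g\mapsto\omega_{(gh)o}(\alpha^{gh}_o(A))\big)$, which is the right $h$-translate of the integrand defining $\varphi_o(A)$, hence equals $\varphi_o(A)$ by right-invariance of $m$. Thus $\varphi_{ho}\circ\alpha^h_o=\varphi_o$, and $\varphi$ is a $G$-invariant state (note the argument never used that the net is a bundle).

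For (ii), let $(\cA,\jmath,\alpha)_K$ be nondegenerate and $G$-covariant. By Remark \ref{Bd:18} the enveloping net bundle $(\overline{\cA},\overline{\jmath})_K$ carries a $G$-action $\overline{\alpha}$ for which it is $G$-covariant and the canonical embedding is $G$-covariant, i.e. $\e_{go}\circ\alpha^g_o=\overline{\alpha}^g_o\circ\e_o$. Nondegeneracy means $(\overline{\cA},\overline{\jmath})_K$ is a nonvanishing $\rC^*$-net bundle whose holonomy dynamical system has the amenable group $\pi_1(K)$, so it admits states by Lemma \ref{Ca:4} (amenability of $\pi_1(K)$ producing invariant states of the holonomy system, as in Prop.\ \ref{Ca:5}). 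Applying part (i) to this $G$-covariant $\rC^*$-net bundle yields a $G$-invariant state $\overline{\varphi}$, and I would pull it back by setting $\varphi_o:=\overline{\varphi}_o\circ\e_o$; by (\ref{Ca:2}) this is a state of $(\cA,\jmath)_K$, and its $G$-invariance follows from the covariance of $\e$ and of $\overline{\varphi}$:
\[
\varphi_{go}\circ\alpha^g_o=\overline{\varphi}_{go}\circ\e_{go}\circ\alpha^g_o=\overline{\varphi}_{go}\circ\overline{\alpha}^g_o\circ\e_o=\overline{\varphi}_o\circ\e_o=\varphi_o.
\]

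The verifications in (i) are routine once the averaging is set up; the one genuine subtlety I anticipate is making the averaging legitimate when $G$ is a nondiscrete topological group, since then the invariant mean is defined only on a suitable space of bounded functions rather than on all of $\ell^\infty(G)$. For discrete $G$ the orbit functions $g\mapsto\omega_{go}(\alpha^g_o(A))$ lie in $\ell^\infty(G)$ and nothing further is needed; for a continuous symmetry group I would invoke the continuity of the action $\alpha$ (see \S \ref{Ba}) to place these functions among the bounded continuous functions carrying a right-invariant mean. The only place where handedness intervenes is the choice of a right- rather than left-invariant mean, forced by the cocycle producing a right translation in the invariance computation; this is harmless, since amenability furnishes invariant means of either kind.
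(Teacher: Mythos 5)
Your proposal is correct and follows essentially the same route as the paper's proof: part (i) by averaging the orbit functions $g\mapsto\omega_{go}(\alpha^g_o(A))$ against a right-invariant mean, with the compatibility and invariance checks being exactly the paper's relations $(*)$ and $(**)$, and part (ii) by pulling back a $G$-invariant state of the enveloping net bundle (obtained via Remark \ref{Bd:18}, Proposition \ref{Ca:5} and part (i)) along the canonical embedding. Your closing remark on the measurability/continuity of the orbit functions for nondiscrete $G$ is a point the paper passes over silently, and your resolution via continuity of the action is the right one.
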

\begin{proof}
$(i)$ Let $(\cA,\jmath,\alpha)_K$ be a $G$-covariant $\rC^*$-net bundle
and $\omega \in \cS(\cA,\jmath)_K$. For any $o\in K$ and $A \in \cA_o$, define 
\[ 
f_o^A(g):= \omega_{go} \circ \alpha^g_o(A) \ , \qquad g \in G \ .
\] 
It is clear that $f_o^A\in L^\infty(G)$ for any $o\in K$ and $A\in\cA_o$. Moreover the mapping 
${\cA_o}\ni A\to\rf_o^A\in L^\infty(G)$ is linear and positive. We also note that 
the following two relations hold: 
\[
\begin{array}{llc}
f_a^{{\jmath}_{ao}(A)}= f_o^{A} \ , & o\leq a \ ,  & \qquad \qquad     (*) \\[5pt] 
f_{ho}^{\alpha^h_o(A)} = (f_o^A)_h \ , & h\in G \ , & \qquad \qquad (**)
\end{array}
\]
where $(f_o^A)_h$ means the right translation of $f_o^A$ by $h$. 
In fact, for any $A\in{\cA_o}$ and $g\in G$ we have 
\[
f_a^{{\jmath}_{ao}(A)}(g)  = \omega_{ga}(\alpha^g_a({\jmath}_{ao}(A))  
				  = \omega_{ga}({\jmath}_{ga\,go}(\alpha^g_o(A))
				=  \omega_{go}(\alpha^g_o(A)) 
				 =f_o^{A}(g) \ , 
\]
proving  $(*)$. 
Moreover, for any $A\in{\cA_o}$ and $g,h\in G$ we have 
\[
f_{ho}^{\alpha^h_o(A)}(g)   = 
\omega_{gho}(\alpha^{g}_{ho}(\alpha^h_o(A)) 
  = \omega_{gho}(\alpha^{gh}_o(A) ) 
  = f_o^A(gh)
 = (f_o^A)_h(g) \ , 
\]
%
proving $(**)$. Now, 
let $\mu$ be a right invariant mean over $L^\infty(G)$. Define 
\[
\varphi_o(A):= \mu(f_o^A) \ , \qquad A\in{\cA_o} \ ;
\]
this is a state over ${\cA_o}$. 
Moreover, the collection $\varphi:=\{\varphi_o \ ,  \ o\in K\}$ is a 
$G$-invariant state of $(\cA,\jmath)_K$, in fact
by the relation $(*)$  
$\varphi_o \circ {\jmath}_{oa}(A)=
 \mu(f_o^{\jmath_{oa}(A)}) =
 \mu(f_o^A)=
 \varphi_o(A)$. 
On the other hand, by the relation $(**)$ it follows that 
\[
\varphi_{ho} \circ \alpha^h_o(A) = 
\mu(f_{ho}^{\alpha^h_o(A)}) = 
\mu((f_o^A)_{h}) =
\mu(f_o^A) = 
\varphi_o(A) \ , 
\]
where the invariance of $\mu$ has been used.\\ 
\indent $(ii)$ Since $\pi_1^o(K)$ is amenable, by Proposition \ref{Ca:5} 
the enveloping  net bundle $(\overline{\cA},\overline{\jmath})_K$ has states.
Applying Rem.\ref{Bd:18} we conclude that $(\overline{\cA},\overline{\jmath})_K$ is
$G$-covariant with an action $\overline{\alpha}$ satisfying
\[
\overline{\alpha}^{g}_o \circ \e_o = \e_{go} \circ \alpha^g_o
\ \ , \ \
\forall o \in K
\ , \
g \in G
\ .
\]
Applying $(i)$, we conclude that $(\overline{\cA},\overline{\jmath},\overline{\alpha})_K$ has
an invariant state $\overline{\varphi}$, thus defining
$\varphi_o := \overline{\varphi}_o \circ \e_o$, for any 
$o \in K$,
yields an invariant state of $(\cA,\jmath)_K$.
\end{proof}

\subsection{Representations}
\label{Cb}
We now study representations of nets of $\rC^*$-algebras. 
We relate representations of a $\rC^*$-net bundle to those of the corresponding holonomy 
dynamical system, and representations of a net to those of the enveloping net bundle.
This leads to the equivalence between injectivity and existence of faithful representations. 
Injective nets defined over a poset with amenable fundamental group, and  an amenable 
symmetry group, have covariant representations. We also characterize  
those nets having Hilbert space representations and those nets having a trivial enveloping net bundle.\bigskip

Let $(\cA,{\jmath})_K$ be a net of $\rC^*$-algebras. 
A \emph{representation} of $(\cA,{\jmath})_K$  
is a pair $(\pi,U)$, where $\pi$ is a family of Hilbert space representations
$\pi_o : {\cA_o} \to \mathfrak{B}(\cH_{o})$, $o \in K$  and   
$U$ is a family of unitaries     
$U_{ao} : \cB(\cH_o)  \to \cB(\cH_{a})$, $o \leq a$, called \emph{inclusion operators},  
such that    
\begin{equation}
\label{Cb:1}
U_{ao}\in (\pi_o,\pi_{a}\circ {\jmath}_{ao})  \ , \qquad o\leq a  \ , 
\end{equation}
and 
\begin{equation}
\label{Cb:2}
U_{ea}\, U_{ao} = U_{eo} \ , \qquad o\leq a \leq o \ .
\end{equation}
The representation 
$(\pi,U)$ is said to be \emph{faithful} if $\pi_o$ is a faithful representation 
of ${\cA_o}$ for any $o\in K$. A \emph{Hilbert space representation} of $(\cA,\jmath)_K$ is a
representation of the form $(\pi,1)$ (here we assume that every 
Hilbert space $\cH_o$, $o \in K$, coincides with a fixed 
Hilbert space $\cH$ whose identity is  denoted by $1$).

\begin{remark}
\label{Cb:3}
In the context of the algebraic quantum field theory 
a representation of a net of $\rC^*$-algebras  usually means what we call a Hilbert 
space representation. The first time  a representation, in the sense of the present paper,   
appeared was  in \cite{FH};  the reconstruction of a state of an algebra associated to a region 
of a spacetime from a family of states of its subregions yielded a collection of representations 
and  unitary operators  satisfying the above relations. This structure  has been promoted 
to the r\^ole of a representation of a net of $\rC^*$-algebras in \cite{BR} 
(and called a unitary net representation) where 
its topological content was analyzed and where, in particular,  
its r\^ole in the description of charges induced by the topology of a spacetime 
was pointed out. 
\end{remark}

With the above notation, it is easily seen that $U_{oo}=1_o$ for any $o\in K$. 
Since unitaries  $U$ are invertible, we define $U_{oa}:= {U}^*_{ao}$ for any $o\leq a$.   
Note that the pair $(\cH,U)_K$, $\cH := \{ \cH_o \}$, defines a Hilbert net bundle,
and (\ref{Cb:1}), (\ref{Cb:2}) imply that $(\pi,U)$ yields a morphism 
\begin{equation}
\label{Cb:4}
\pi : (\cA,\jmath)_K \to (\cB \cH , \ad U)_K \ ,
\end{equation}
where $(\cB \cH , \ad U)_K$ is the net bundle with fibres $\cB(\cH_o)$, $o \in K$, 
with inclusion maps $\ad U_{ao}$, $o \leq a$, defined by the adjoint action.  
So a representation can be seen as a morphism from the given net 
to the $\rC^*$-net bundle defined by a Hilbert net bundle. Both the concrete
$\rC^*$-net bundle and the Hilbert net bundle are trivial in the case of Hilbert space 
representations. \smallskip

An \emph{intertwiner}  between two representations $(\pi,U)$ and $(\pi',U')$ of
$(\cA,\jmath)_K$ is a family of bounded linear operators 
$T:=\{T_{o}:\cH_o\to\cH'_o \ , \ o\in K\}$ such that $T_o\in(\pi_o,\pi'_o)$,  and 
\begin{equation}
\label{Cb:5}
 T_o\, U_{oa} = U'_{oa}\, T_a \ , \qquad  a\leq o \ .  
\end{equation}
When all $T_o$ are unitaries we shall say that $T$ is a \emph{unitary} intertwiner. 
Two representations $(\pi,U)$ and $(\pi',U')$ are \emph{equivalent}
if they have a unitary intertwiner. A representation is said to be 
\emph{topologically trivial} whenever it is equivalent to a Hilbert space representation
(the motivation of this terminology will soon be clear). Finally, a representation $(\pi,U)$ is said to \emph{vanish} whenever $\pi_a(A)=0$ for any $a\in K$  and $A\in\cA_a$.\smallskip

We can always assume that a representation $(\pi,U)$ is defined on a fixed Hilbert space
(see \cite{BR} for details). For any 1-simplex $b$ 
define $U_b:= U_{\partial_0b|b|}\, U_{|b|\partial_1b}$, so we have 
a unitary $U_b:\cH_{\partial_1b}\to \cH_{\partial_0b}$. 
Extend $U$ from 1-simplices to paths 
in the usual way,
\begin{equation}
\label{Cb:6}
U_p := U_{b_n} \, \cdots \, U_{b_2}\,  U_{b_1}
\  , \qquad 
p = b_n* \cdots b_2* b_1
\ .
\end{equation}
Afterwards, fix a path frame $P_o$ and define  
${\pi}'_a(\cdot):= U_{p_{(o,a)}}\, \pi_a(\cdot) \, U_{p_{(a,o)}}$ with  $a\in K$ 
(note that ${\pi}'_o= \pi_o$) and 
${U}'_{ae}:= U_{p_{(o,a)}}\, U_{ae}\,U_{p_{(e,o)}}$ with  $a\leq e$. Then  
the pair $(\pi',U')$ defines a representation of the net $(\cA,\jmath)$ into a fixed 
Hilbert space; the family $T_a:=U_{p_{(o,a)}}$, with $a\in K$,  is a unitary intertwiner from 
$(\pi,U)$ to $(\pi',U')$. On these grounds, unless otherwise stated,  
we assume from now on that \emph{all representations are defined on a fixed Hilbert space}.
\smallskip

The topological content of a representation $(\pi,U)$ can be easily seen 
by considering the associated Hilbert net bundle $(\cH,U)$. 
The same reasoning used in  defining the holonomy dynamical system (\S \ref{Bc}), yields 
a group morphism 
\begin{equation}
\label{eq.u*}
U_* : \pi^o_1(K) \to \cU(\cH)
\end{equation}
where $\cU(\cH)$ is the group of unitary operators of the Hilbert space $\cH$. 
Explicitly, one extends $U$ to simplices as above and observes that the mapping
$U:\Si_1(K)\to U_b\in\cB(\cH)$ satisfies  the 1-cocycle relation
$U_{\partial_0c}\,U_{\partial_2c}=U_{\partial_1c}$ for any $2$-simplex $c$. 
So $U$, in turn, defines a representation $U_*$ of the fundamental group of $K$. 
We shall call $U_*$ \emph{the holonomy representation} associated with $(\pi,U)$.\smallskip

We list the following results from \cite{BR}, which can be proved  as in
\S \ref{Bc}:
\begin{itemize}
\item[(i)] If two representations are equivalent then the 
corresponding representations of $\pi_1(K)$ are equivalent. 
\item[(ii)] If $K$ is simply connected then any representation 
          is equivalent to a Hilbert space representation. 
\end{itemize}
These two points explain the term 'topologically trivial representation'. \smallskip 

The first task is to relate representations of a net to those of the enveloping net bundle.  
\begin{lemma}
\label{Cb:7}
Any representation of the net $(\cA,\jmath)_K$ extends uniquely to a representation of 
$(\overline{\cA},\overline{\jmath})_K$, and this yields a bijective correspondence 
between representations of $(\cA,\jmath)_K$ and representations of $(\overline{\cA},\overline{\jmath})_K$.
\end{lemma}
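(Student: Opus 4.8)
The plan is to recognize a representation of a net as a morphism into a $\rC^*$-net bundle and then to invoke the universal property of the enveloping net bundle, Prop.~\ref{Bd:11}. Given a representation $(\pi,U)$ of $(\cA,\jmath)_K$, the associated Hilbert net bundle $(\cH,U)_K$ gives rise to the $\rC^*$-net bundle $(\cB\cH,\ad U)_K$ of equation~(\ref{Cb:4}), whose inclusion maps $\ad U_{ao}$ are $^*$-isomorphisms precisely because the $U_{ao}$ are unitary. By the very relations~(\ref{Cb:1}) and~(\ref{Cb:2}), the pair $(\pi,U)$ is nothing but the morphism $\pi : (\cA,\jmath)_K \to (\cB\cH,\ad U)_K$. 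Conversely, a morphism from the net into a $\rC^*$-net bundle of this special form is a representation on the Hilbert net bundle defining it. This dictionary drives the whole argument.

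First I would apply Prop.~\ref{Bd:11}(ii) with target the $\rC^*$-net bundle $(\cB\cH,\ad U)_K$: it produces a unique morphism $\pi^\uparrow : (\overline{\cA},\overline{\jmath})_K \to (\cB\cH,\ad U)_K$, over $\mathrm{id}_K$, satisfying $\pi = \pi^\uparrow \circ \e$. Under the dictionary above, $(\pi^\uparrow,U)$ is a representation of the $\rC^*$-net bundle $(\overline{\cA},\overline{\jmath})_K$ on the very same Hilbert net bundle $(\cH,U)_K$, and the fibrewise identity $\pi_o = \pi^\uparrow_o \circ \e_o$ says exactly that this representation extends $(\pi,U)$ along the canonical embedding. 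Uniqueness of the extension is then immediate from Prop.~\ref{Bd:11}(i): if $(\varphi,U)$ and $(\theta,U)$ were two extensions, then $\varphi \circ \e = \pi = \theta \circ \e$ forces $\varphi = \theta$.

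Next I would exhibit the inverse correspondence. Given a representation $(\overline{\pi},\overline{U})$ of $(\overline{\cA},\overline{\jmath})_K$, viewed as a morphism $\overline{\pi} : (\overline{\cA},\overline{\jmath})_K \to (\cB\cH,\ad \overline{U})_K$, the composite $\overline{\pi}\circ\e : (\cA,\jmath)_K \to (\cB\cH,\ad\overline{U})_K$ is again a morphism into a $\rC^*$-net bundle, hence a representation $(\overline{\pi}\circ\e,\overline{U})$ of the net. That the two assignments are mutually inverse is a formal check: starting from $(\pi,U)$ one has $\pi^\uparrow\circ\e = \pi$, so restriction recovers $(\pi,U)$; starting from $(\overline{\pi},\overline{U})$, the lift $(\overline{\pi}\circ\e)^\uparrow$ of $\overline{\pi}\circ\e$ obeys $(\overline{\pi}\circ\e)^\uparrow\circ\e = \overline{\pi}\circ\e$, whence $(\overline{\pi}\circ\e)^\uparrow = \overline{\pi}$ by the uniqueness clause of Prop.~\ref{Bd:11}(i). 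Throughout, the Hilbert net bundle is carried along unchanged, since $\e$ acts only on the algebraic side.

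The only point requiring genuine care, and the place where I expect the real work to sit, is the translation between the two languages: verifying that a morphism into $(\cB\cH,\ad U)_K$ reassembles into a pair obeying the intertwining relation~(\ref{Cb:1}) and the cocycle relation~(\ref{Cb:2}), and conversely. Once this identification is made precise, both the extension and the bijectivity are direct consequences of the universal property and cost nothing beyond it.
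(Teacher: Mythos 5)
Your proposal is correct and follows essentially the same route as the paper: identify a representation with a morphism into the $\rC^*$-net bundle $(\cB\cH,\ad U)_K$ via (\ref{Cb:4}), apply the universal property of Prop.~\ref{Bd:11} for the extension, and compose with $\e$ for the restriction. The only difference is that you spell out the mutual-inverse check via the uniqueness clause of Prop.~\ref{Bd:11}(i), which the paper leaves implicit.
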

\begin{proof}
Let $\e$ be the canonical embedding of the net into the enveloping net bundle. 
For any representation $(\si,V)$ of the enveloping net bundle, the pair 
$(\si\circ\e,V)$ defines a representation of the net. Conversely, 
by (\ref{Cb:4}), any  representation $(\pi,U)$ of $(\cA,\jmath)_K$ defines 
a morphism $\pi : (\cA,\jmath)_K \to (\cB \cH , \ad U)$. By Prop. \ref{Bd:11} 
there is a unique morphism 
$\pi^\uparrow:(\overline{\cA},\overline{\jmath})_K \to (\cB \cH , \ad U)$, defined by 
equation (\ref{Bd:12}),  
such that $\pi^\uparrow\circ \e= \pi$.  
This completes the proof. 
%
\end{proof}
We note that the extension of a faithful representation of a net 
to the enveloping net bundle  need not be faithful.\\ 
\indent The next result relates representations of $\rC^*$-net bundles to covariant representations
of the associated dynamical system. 
\begin{lemma}
\label{Cb:8}
Representations of a $\rC^*$-net bundle are, up to equivalence, in bijective correspondence 
with covariant representations of the corresponding holonomy dynamical system. 
\end{lemma}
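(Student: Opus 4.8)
The plan is to exhibit two mutually inverse constructions, in the spirit of Lemma \ref{Ca:4} and the net-bundle/dynamical-system correspondence of \S \ref{Bc}. Throughout I assume, as permitted by the reduction to a fixed Hilbert space $\cH$ carried out above, that representations of the $\rC^*$-net bundle $(\cA,\jmath)_K$ act on a single $\cH$. Fix the base point $o\in K$ used to form the holonomy dynamical system $(\cA_*,\pi_1^o(K),\jmath_*)$, with $\cA_*=\cA_o$.

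\emph{Forward direction.} Given a representation $(\pi,U)$ of $(\cA,\jmath)_K$, I would set $\rho:=\pi_o$ and take $V:=U_*$, the holonomy representation (\ref{eq.u*}) of $\pi_1^o(K)$ obtained by extending $U$ to $1$-simplices and paths as in (\ref{Cb:6}). The first point to verify is that $(\rho,V)$ is covariant, i.e. $V_{[p]}\,\rho(A)=\rho(\jmath_{*,[p]}(A))\,V_{[p]}$. For this I would check, from $U_b=U_{\partial_0b|b|}\,U_{|b|\partial_1b}$ and the intertwining relations (\ref{Cb:1}), that $U_b\in(\pi_{\partial_1b},\pi_{\partial_0b}\circ\jmath_b)$ for every $1$-simplex $b$; composing along a path $p$ gives $U_p\in(\pi_{\partial_1p},\pi_{\partial_0p}\circ\jmath_p)$, and specializing to a loop over $o$ yields exactly the covariance relation. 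Well-definedness of $[p]\mapsto U_p$ on $\pi_1^o(K)$ and the homomorphism property $U_{p*q}=U_pU_q$ follow from the $1$-cocycle identity $U_{\partial_0c}U_{\partial_2c}=U_{\partial_1c}$, exactly as for $\jmath_*$ in \S \ref{Bc}.

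\emph{Backward direction.} Given a covariant representation $(\rho,V)$, I would fix a path frame $P_o=\{p_{(a,o)}\}$ and reconstruct $(\pi,U)$ on the fixed Hilbert space by the parallel-transport recipe of (\ref{Bc:7}): put $\pi_a:=\rho\circ\jmath_{p_{(o,a)}}$ and, for $e\leq a$,
\[
U_{ae}:=V_{[\,p_{(o,a)}*(ae)*p_{(e,o)}\,]}\ .
\]
That this is a representation of the net bundle I would establish in two steps. The intertwining property $U_{ae}\in(\pi_e,\pi_a\circ\jmath_{ae})$ follows from the covariance of $(\rho,V)$ together with the homotopy $p_{(o,a)}*(ae)*p_{(e,o)}*p_{(o,e)}\sim p_{(o,a)}*(ae)$ and the identity $\jmath_{(ae)}=\jmath_{ae}$. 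The composition law (\ref{Cb:2}) reduces, using that $V$ is a homomorphism, to the nerve homotopy $(ab)*(bc)\sim(ac)$ for $c\leq b\leq a$. Since $p_{(o,o)}\sim\iota_o$ one has $\pi_o=\rho$.

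\emph{Round trip and the main point.} It remains to show these assignments are mutually inverse up to equivalence. Starting from $(\rho,V)$ and going backward-then-forward returns $\rho$ on the nose, and, using the homotopy $b\sim\overline{(|b|\partial_0b)}*(|b|\partial_1b)$ recorded in \S \ref{Bba}, a telescoping computation with the homomorphism $V$ shows $U_{*,[p]}=V_{[p]}$ for every loop $p$; thus the composite is the identity on the dynamical-system side. Conversely, starting from $(\pi,U)$, the reconstructed representation $(\pi',U')$ satisfies $\pi'_a=\pi_o\circ\jmath_{p_{(o,a)}}$ and $U'_{ae}=U_{p_{(o,a)}}U_{ae}U_{p_{(e,o)}}$, and the family $T_a:=U_{p_{(o,a)}}$ is precisely the unitary intertwiner from $(\pi,U)$ to $(\pi',U')$ exhibited above in the reduction to a fixed Hilbert space; hence $(\pi',U')$ is equivalent to $(\pi,U)$. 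I expect the genuinely delicate part to be exactly this last round trip: one must track the path-frame dependence of the backward construction and confirm that both assignments descend to, and are bijective on, unitary-equivalence classes (the forward map respecting equivalence being fact (i) above). The covariance identity in the forward direction and the compatibility of $U_{ae}$ with $(\pi,U)$ in the backward direction are the two computations carrying the real content.
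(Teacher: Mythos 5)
Your proof is correct and takes essentially the same route as the paper: the paper's (sketched) argument invokes the identification (\ref{Cb:4}) of a representation with a morphism into $(\cB \cH , \ad U)_K$ together with the equivalence of Prop.~\ref{Bc:10}, whose forward and inverse functors are exactly your two explicit constructions ($\rho=\pi_o$ with $V=U_*$ in one direction, and the path-frame reconstruction composed with the isomorphism $\tau$ of (\ref{Bc:9}) in the other). You have simply unpacked that sketch into concrete computations, including the round-trip and equivalence-class checks the paper leaves implicit.
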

\begin{proof}
We give a sketch of the proof since the reasoning  is similar to that of the proof of Lemma \ref{Ca:4}. By (\ref{Cb:4}), we have that every representation $(\pi,U)$ of $(\cA,\jmath)_K$ defines 
a morphism $\pi : (\cA,\jmath)_K \to (\cB \cH , \ad U)$. Thus, by Prop.\ref{Bc:10}, we have 
the morphism of dynamical systems
\begin{equation}
\label{Cb:9}
\pi_* : \cA_* \to \cB(\cH)
\ , \
\pi_* \circ \jmath_{*,[p]} = \ad U_{*,[p]} \circ \pi_*
\ \ , \ \
\forall [p] \in \pi_1^o(K)
\ ,
\end{equation}
i.e., $\pi_*$ is a covariant representation of $( \cA_*,\pi_1^o(K),\jmath_* )$.
Conversely, given  a covariant representation $(\eta,V)$ of 
$( \cA_*,\pi_1^o(K),\jmath_* )$ on the Hilbert space $\cH$ then, 
again by Prop.\ref{Bc:10}, we can define the 
net bundle $( \cB(\cH)_* , \ad V_* )_K$ and the morphism
\[
\eta_* : (\cA_{**},\jmath_{**})_K \to ( \cB(\cH)_* , \ad V_* )_K
\ .
\]
Since $(\cA_{**},\jmath_{**})_K$ is isomorphic to $(\cA,\jmath)_K$, composing
with $\eta_*$ gives  the desired representation.
\end{proof}
After this the relation between injectivity and the existence of faithful representations 
follows easily. 
\begin{theorem}
\label{Cb:10}
The following assertions hold. 
\begin{itemize}
\item[(i)] A net of $\rC^*$-algebras is injective if, and only if, it has  faithful representations. 
\item[(ii)] A net of $\rC^*$-algebras is nondegenerate if, and only if, it has  nonvanishing
representations.
\end{itemize} 
\end{theorem}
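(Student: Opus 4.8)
The plan is to deduce Theorem \ref{Cb:10} by chaining together the two bijective correspondences established in Lemmas \ref{Cb:7} and \ref{Cb:8}, together with the characterization of invariant/covariant states and the structure of the holonomy dynamical system. The guiding principle is that faithfulness and nonvanishing are properties that survive these correspondences in a controllable way, so the whole argument reduces to checking how faithfulness of a representation of the net relates to faithfulness of a covariant representation of the holonomy dynamical system $(\cA_*,\pi_1^o(K),\jmath_*)$.

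\medskip

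For part $(i)$, I would first treat the easy implication: if $(\cA,\jmath)_K$ has a faithful representation $(\pi,U)$, then I must produce a monomorphism $\e$ into the enveloping net bundle. The point is that $(\pi,U)$ yields, by Lemma \ref{Cb:7}, a representation $(\pi^\uparrow,U)$ of $(\overline{\cA},\overline{\jmath})_K$ with $\pi^\uparrow\circ\e=\pi$; since each $\pi_o$ is faithful and factors through $\e_o$, each $\e_o$ must be injective, so $\e$ is a monomorphism and the net is injective. For the converse, suppose $(\cA,\jmath)_K$ is injective, so $\e$ is faithful on the fibres. By Lemma \ref{Cb:8} it suffices to produce a faithful representation of the enveloping $\rC^*$-net bundle, which by that lemma amounts to producing a faithful covariant representation of its holonomy dynamical system $(\overline{\cA}_*,\pi_1^o(K),\overline{\jmath}_*)$. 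The natural candidate is an induced representation: take a faithful representation of the single $\rC^*$-algebra $\overline{\cA}_*=\overline{\cA}_o$ and induce it up along the $\pi_1^o(K)$-action to obtain a covariant pair $(\eta,V)$ on $\ell^2(\pi_1^o(K),\cH_0)$ in which $\eta$ remains faithful (this is the standard fact that the regular covariant representation built from a faithful representation stays faithful). Pulling this back through Lemma \ref{Cb:8} gives a faithful representation of $(\overline{\cA},\overline{\jmath})_K$, and composing with the faithful embedding $\e$ yields a faithful representation of the original net.

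\medskip

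Part $(ii)$ I would handle in the same spirit but with nonvanishing in place of faithfulness. A representation $(\pi,U)$ is nonvanishing exactly when some $\pi_a$ is nonzero. If the net is nondegenerate then the enveloping net bundle is nonvanishing, i.e. $\overline{\cA}_o\neq\{0\}$, so its holonomy $\rC^*$-dynamical system has a nonzero fibre and admits a nonvanishing covariant representation (again by inducing any nonzero representation of $\overline{\cA}_o$); transporting back through Lemmas \ref{Cb:8} and \ref{Cb:7} and composing with $\e$ gives a nonvanishing representation of the net. Conversely, a nonvanishing representation of the net extends by Lemma \ref{Cb:7} to a nonvanishing representation of $(\overline{\cA},\overline{\jmath})_K$, which forces some fibre $\overline{\cA}_o$ to be nonzero, hence the enveloping net bundle does not vanish and the net is nondegenerate.

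\medskip

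The main obstacle, I expect, is the converse direction of $(i)$: turning the fibrewise faithfulness of $\e$ into a single globally faithful representation. It is not enough to faithfully represent each fibre separately, because the inclusion operators $U$ must intertwine these representations consistently and realize the holonomy $1$-cocycle. The delicate point is verifying that the induced (regular) covariant representation of $(\overline{\cA}_*,\pi_1^o(K),\overline{\jmath}_*)$ is genuinely faithful on $\overline{\cA}_*$ and that, after passing back through the equivalence of Lemma \ref{Cb:8}, faithfulness is retained on every fibre $\overline{\cA}_a$ rather than just on the base fibre $\overline{\cA}_o$. Since the inclusion maps of a $\rC^*$-net bundle are $^*$-isomorphisms, faithfulness at $o$ should propagate along the $\jmath_p$ to all fibres, so I would expect this to go through cleanly once the induced representation is set up correctly.
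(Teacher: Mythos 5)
Your proposal is correct and follows essentially the same route as the paper: both directions are run through Lemma \ref{Cb:7} and Lemma \ref{Cb:8}, with the hard implication settled by the standard fact that every $\rC^*$-dynamical system admits a faithful covariant (regular/induced) representation, whose faithfulness propagates to all fibres because the inclusion maps of a net bundle are isomorphisms. The only cosmetic difference is in the easy direction of $(i)$: the paper invokes Prop.~\ref{Bd:19}$(i)$ (viewing the representation as a fibrewise-faithful morphism into the $\rC^*$-net bundle $(\cB\cH,\ad U)_K$), whereas you re-derive the same factorization argument $\pi_o=\pi^{\uparrow}_o\circ\e_o$ directly from Lemma \ref{Cb:7}.
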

\begin{proof}
$(i)$ ($\Leftarrow$)  follows by Prop.\ref{Bd:19}.$i$. 
($\Rightarrow$) By the previous lemma it suffices 
to show the existence of faithful covariant representations for the dynamical system associated with the enveloping net bundle. But this is true for any dynamical system,  
completing the proof of $(i)$. A similar reasoning lead to the proof of $(ii)$. 
\end{proof}
We discuss the existence of Hilbert space representations. Let $(\rA,G,\alpha)$ be 
a $\rC^*$-dynamical system. A $G$-\emph{invariant representation} is a representation $\pi$ of 
$\rA$ such that $\pi\circ \alpha_g=\pi$ for any $g\in G$. 
%
%
Let $J^G_\rA$ the $G$-invariant ideal of $\rA$ generated by the elements $A-\alpha_g(A)$ for 
$A\in\rA$ and $g\in G$; then we have the following result:
\begin{lemma}
\label{Cb:10aa}
A $\rC^*$-dynamical system $(\rA,G,\alpha)$  has  nontrivial $G$-invariant representations if, 
and only if, the ideal $J^G_\rA$ is proper. 
\end{lemma}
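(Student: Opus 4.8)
The plan is to reduce the whole statement to the elementary fact that a $\rC^*$-algebra admits a nontrivial representation if and only if it is nonzero, applied to the quotient $\rA/J^G_\rA$. The bridge is the observation that a representation $\pi$ of $\rA$ is $G$-invariant precisely when it kills the generators $A-\alpha_g(A)$ of $J^G_\rA$, and hence --- since $\ker\pi$ is a norm-closed two-sided ideal --- precisely when $J^G_\rA\subseteq\ker\pi$, i.e. when $\pi$ factors through the quotient $q:\rA\to\rA/J^G_\rA$.

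First I would make this correspondence precise. Given a $G$-invariant $\pi$, for each generator one has $\pi(A-\alpha_g(A))=\pi(A)-\pi(\alpha_g(A))=0$ directly from $\pi\circ\alpha_g=\pi$. Moreover $\ker\pi$ is itself $G$-invariant: if $\pi(B)=0$ then $\pi(\alpha_g(B))=\pi(B)=0$, so $\alpha_g(B)\in\ker\pi$. Thus $\ker\pi$ is a $G$-invariant closed ideal containing all generators of $J^G_\rA$, whence $J^G_\rA\subseteq\ker\pi$ by the minimality built into the definition of the $G$-invariant ideal generated by those elements. Conversely, if $J^G_\rA\subseteq\ker\pi$ then every $A-\alpha_g(A)$ lies in $\ker\pi$, so $\pi\circ\alpha_g=\pi$ and $\pi$ is $G$-invariant. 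This identifies the $G$-invariant representations of $\rA$ with the representations of the form $\rho\circ q$, where $\rho$ is a representation of $\rA/J^G_\rA$.

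For the two implications I would then argue as follows. If $J^G_\rA$ is proper, then $\rA/J^G_\rA$ is a nonzero $\rC^*$-algebra, so by Gelfand--Naimark it admits a faithful, in particular nontrivial, representation $\rho$; composing with the surjection $q$ yields a nontrivial $G$-invariant representation of $\rA$. Conversely, if $\pi$ is a nontrivial $G$-invariant representation, then by the correspondence above $J^G_\rA\subseteq\ker\pi$, and since $\pi\neq 0$ one has $\ker\pi\neq\rA$; therefore $J^G_\rA\subseteq\ker\pi\subsetneq\rA$ is proper.

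The only points requiring care are entirely standard: that $\ker\pi$ is a $G$-invariant closed two-sided ideal, so that the minimality of $J^G_\rA$ applies, and that a nonzero $\rC^*$-algebra possesses a nontrivial representation. Neither poses a real obstacle, so once the factorization through $\rA/J^G_\rA$ is isolated the lemma is essentially a bookkeeping exercise, and I expect no genuine difficulty.
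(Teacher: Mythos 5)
Your proposal is correct and follows essentially the same route as the paper: both directions rest on factoring $G$-invariant representations through the quotient $\rA/J^G_\rA$, using that this quotient is nonzero exactly when $J^G_\rA$ is proper and then invoking a faithful representation of it. Your kernel-containment correspondence is just a slightly more explicit packaging of the paper's observation that the lifted action on the quotient is trivial.
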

\begin{proof}
$(\Rightarrow)$ If $\pi$ is an invariant representation, then $J^G_{\rA}$ lays within the $Ker(\pi)$.
$(\Leftarrow)$ If $J^G_\rA$ is proper, then the quotient 
$\widehat{\rA}:=\rA/ J^G_\rA$ is not trivial, and since  $J^G_{\rA}$ is 
$G$-invariant, the action  $\alpha$ lifts to an action $\widehat{\alpha}$ on $\widehat{\rA}$. 
However this action is trivial according to the definition of $J^G_\rA$. So take  
a faithful representation $\si$ of $\widehat{\rA}$ and define 
$\pi(A):=\si(\widehat{A})$ for any $A\in\rA$. Then $\pi\circ\alpha_g(A)=\si(\widehat{\alpha_g(A)})=
\si(\widehat{A})=\pi(A)$ for any $A\in\rA$ and $g\in G$, completing the proof. 
\end{proof}
We are ready to provide a necessary and sufficient condition for the existence of Hilbert space representations. 
\begin{proposition}  
\label{Cb:10ab}
An injective net of $\rC^*$-algebras $(\cA,\jmath)_K$ has nontrivial Hilbert space representations 
if, and only if, the ideal ${J}^{\pi^o_1(K)}_{\overline{\cA}_*}$ is proper, where
$\overline{\cA}_*$ is the holonomy dynamical system of the enveloping net bundle. 
\end{proposition}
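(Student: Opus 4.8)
The plan is to read the statement through the three correspondences already established in this section, reducing the existence of nontrivial Hilbert space representations of $(\cA,\jmath)_K$ to the existence of nontrivial $\pi^o_1(K)$-invariant representations of the holonomy dynamical system $(\overline{\cA}_*,\pi^o_1(K),\overline{\jmath}_*)$, and then to invoke Lemma \ref{Cb:10aa}. The guiding observation is that a Hilbert space representation is precisely a representation whose inclusion operators, and hence whose holonomy representation (\ref{eq.u*}), are trivial.

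First I would record that, by Lemma \ref{Cb:7}, representations of $(\cA,\jmath)_K$ and of $(\overline{\cA},\overline{\jmath})_K$ are in bijection, extension and restriction along $\e$ leaving the inclusion operators unchanged; in particular the Hilbert space representations (those with all inclusion operators equal to $1$) of the net correspond to Hilbert space representations of the enveloping net bundle. Next, by Lemma \ref{Cb:8}, representations of the $\rC^*$-net bundle $(\overline{\cA},\overline{\jmath})_K$ correspond, up to equivalence, to covariant representations of $(\overline{\cA}_*,\pi^o_1(K),\overline{\jmath}_*)$, the unitary part of the covariant pair being the holonomy representation. A Hilbert space representation has trivial inclusion operators, so $U_b=1$ for every $1$-simplex and thus $U_*=1$; the covariance relation (\ref{Cb:9}) then degenerates to $\pi_*\circ\overline{\jmath}_{*,[p]}=\pi_*$, i.e.\ $\pi_*$ is a $\pi^o_1(K)$-invariant representation of $\overline{\cA}_*$. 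Conversely, pairing an invariant representation with the trivial unitary representation of $\pi^o_1(K)$ yields, through the construction of Lemma \ref{Cb:8}, a net-bundle representation with trivial inclusion operators, that is, a Hilbert space representation. Combining the two bijections identifies the Hilbert space representations of $(\cA,\jmath)_K$ with the $\pi^o_1(K)$-invariant representations of $\overline{\cA}_*$, and Lemma \ref{Cb:10aa} with $G=\pi^o_1(K)$ and $\rA=\overline{\cA}_*$ then produces the nonvanishing ones precisely when ${J}^{\pi^o_1(K)}_{\overline{\cA}_*}$ is proper.

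The step requiring care — the main obstacle — is to track \emph{nonvanishing} along this chain, since Lemma \ref{Cb:8} is phrased only up to equivalence and, a priori, a representation of the net bundle could be nonzero on $\overline{\cA}_o$ while annihilating the subalgebra $\e_o(\cA_o)$. I would resolve this by following the unit: on one hand $\e_o$ is unital, so $\e_o(\mathbbm{1})=\mathbbm{1}\in\overline{\cA}_o$; on the other hand a nonvanishing representation of the unital $\rC^*$-net bundle $(\overline{\cA},\overline{\jmath})_K$ is nonzero on $\mathbbm{1}$. Hence a nonvanishing Hilbert space representation of the net yields an invariant representation $\pi_*$ of $\overline{\cA}_*$ with $\pi_*(\mathbbm{1})=1$, and conversely a nontrivial invariant representation, which may be taken nondegenerate (for instance the faithful representation of the nonzero unital quotient $\overline{\cA}_*/J$ built in Lemma \ref{Cb:10aa}), pulls back along $\e$ to a Hilbert space representation of the net already nonzero on $\mathbbm{1}\in\cA_o$. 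Injectivity of $(\cA,\jmath)_K$ enters as the standing hypothesis guaranteeing the net is nondegenerate, so that $\overline{\cA}_*\neq 0$ and the dichotomy ``$J$ proper versus $J=\overline{\cA}_*$'' is the meaningful one; once nonvanishing is matched on both sides, Lemma \ref{Cb:10aa} closes the argument.
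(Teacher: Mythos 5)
Your proposal is correct and follows essentially the same route as the paper, whose proof is the one-line remark that the statement ``follows straightforwardly from the previous lemma and from Lemma \ref{Cb:8}'': you simply make explicit the chain Lemma \ref{Cb:7} $\to$ Lemma \ref{Cb:8} $\to$ Lemma \ref{Cb:10aa}, together with the observation that Hilbert space representations (trivial holonomy $U_*$) correspond exactly to $\pi^o_1(K)$-invariant representations of $\overline{\cA}_*$. Your extra care in tracking nonvanishing through the bijections via the unit and the unitality of $\e_o$ is a legitimate detail the paper leaves implicit, not a deviation in method.
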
 
\begin{proof}
Follows straighforwardly from the previous lemma and from Lemma \ref{Cb:8}
\end{proof}
Examples of injective nets where this ideal fails to be proper will be given in \S \ref{Daa}.\\
\indent We now want to characterize those nets having a trivial enveloping net bundle. To this end, 
let us introduce the following notion. A representation $(\pi,U)$ of a net $(\cA,\jmath)_K$ 
is said to be \emph{quasi (topologically) trivial} whenever for any $o\in K$ the relation 
\begin{equation}
\label{Cb:10a}
U_p\in (\pi_o,\pi_o) \ , \qquad p:o\to o \ , 
\end{equation}
holds. This  means that the coupling between 
the analytical and the topological content of a quasi trivial representation is 'artificial', 
and can be completely removed from this representation. More precisely, let $(\pi,U)$ be 
a quasi trivial representation. Take a path frame $P_o=\{p_{(a,o)}, \ a\in K\}$, 
and define 
\[
 \si_a:= \ad U_{p_{(o,a)}}\circ \pi_a  \ , \qquad a\in K \ , 
\]
Then $(\si,\mathbbm{1})$ is a Hilbert space representation of the net. In fact 
given $a \leq \tilde a$, by (\ref{Cb:10a}) we have 
\begin{align*}
\si_{\tilde a}\circ \jmath_{\tilde aa} & = \ad U_{p_{(o,\tilde a)}}\circ \pi_{\tilde a} \circ \jmath_{\tilde aa} 
 = \ad U_{p_{(o,\tilde a)}}\circ \ad U_{\tilde a a}\circ \pi_a  \\
& = \ad U_{p_{(o,\tilde a)}*(\tilde a a)}\circ \pi_a    
  = \ad U_{p_{(o,a)}}
  \circ \ad U_{p_{(a,o)}*p_{(o,\tilde a)}*(\tilde a a)}\circ \pi_a  = 
  \ad U_{p_{(o,a)}}\circ \pi_a   =  \si_a \ . 
\end{align*}
So, any quasi trivial representation defines in a natural way a Hilbert space representation. However 
these two representations are not, in general,  equivalent. In fact consider the unitary $T$ defined by $T_{a}:= U_{p_{(a,o)}}$, $a\in K$. Then $T_a\in(\si_a,\pi_a)$ for any $a\in K$, but 
$T$  does not intertwine the inclusion operators since 
$U_{\tilde aa}\, T_a =  U_{(\tilde a,a)* p_{(a,o)}} = T_{\tilde a} \, 
 U_{p_{o,\tilde a)}*(\tilde a,a)* p_{(a,o)}}$,
which is different from $T_a$ unless   the holonomy representation defined by $U$ 
is trivial.  
It is not surprising that to  any  Hilbert space representation $(\si,\mathbbm{1})$ of a net 
one can associate a quasi trivial representation carrying a nontrivial 
representation of the fundamental group. In fact, extend $\si$ to the representation $(\si^{\uparrow},\mathbbm{1})$
of the enveloping net bundle and  consider the covariant representation 
$({\si}^\uparrow_*,\mathbbm{1})$ of the holonomy dynamical system $(\overline{\cA}_*,\pi^o_1(K),\overline{j}_*)$. If the homotopy group 
has representations $V$ taking values in the commutant 
$({\si}^\uparrow_*,\si^{\uparrow}_*)$\footnote{If for instance $\si^\uparrow_*$ is irreducible (see Appendix \ref{AppRep}), then $\pi^o_1(K)$ must have 1-dimensional representations.},
then the pair $(\si^\uparrow_*,V)$ 
is still a covariant representation of the dynamical system. Turning back to the net,  
this yields a quasi trivial representation of the net not equivalent to $(\si,\mathbbm{1})$.
%
%

We now characterize those  nets whose enveloping net bundle is
trivial. 
\begin{proposition}
\label{Cb:10c}
The enveloping net bundle of a nondegenerate net is trivial if, and only if, the net 
has only quasi trivial representations.
\end{proposition}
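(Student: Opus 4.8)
The plan is to recast the statement entirely in terms of the holonomy dynamical system $(\overline{\cA}_*,\pi_1^o(K),\overline{\jmath}_*)$ of the enveloping net bundle. By the classification of Prop.~\ref{Bc:10}, the $\rC^*$-net bundle $(\overline{\cA},\overline{\jmath})_K$ is trivial precisely when the holonomy action is trivial, i.e. $\overline{\jmath}_{*,[q]}=\mathrm{id}$ for every $[q]\in\pi_1^o(K)$; equivalently $\overline{\jmath}_q=\mathrm{id}$ for every loop $q$ over every point, the choice of pole being immaterial since different poles give isomorphic systems. On the representation side I would invoke Lemma~\ref{Cb:7}: every net representation $(\pi,U)$ extends to a representation $(\pi^\uparrow,U)$ of the enveloping net bundle, realised as a morphism $\pi^\uparrow:(\overline{\cA},\overline{\jmath})_K\to(\cB\cH,\ad U)_K$ with $\pi_o=\pi^\uparrow_o\circ\e_o$ and with the same holonomy $U_*$. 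Quasi triviality of $(\pi,U)$ is exactly the requirement that $U_q$ commute with $\pi_a(\cA_a)$ for every loop $q$ over every $a$.

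For the forward implication I would assume $\overline{\jmath}_q=\mathrm{id}$ for all loops and take an arbitrary representation $(\pi,U)$. For a loop $q$ over $a$ the morphism property of $\pi^\uparrow$ gives $\pi^\uparrow_a\circ\overline{\jmath}_q=\ad U_q\circ\pi^\uparrow_a$; triviality forces $\ad U_q\circ\pi^\uparrow_a=\pi^\uparrow_a$, so $U_q$ commutes with $\pi^\uparrow_a(\overline{\cA}_a)\supseteq\pi_a(\cA_a)$. Hence $U_q\in(\pi_a,\pi_a)$, and $(\pi,U)$ is quasi trivial.

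For the converse, exploiting nondegeneracy I would fix a faithful representation $(\pi^\uparrow,U)$ of the nonvanishing enveloping net bundle — one exists because its holonomy dynamical system admits a faithful covariant representation (cf. the proof of Theorem~\ref{Cb:10}) — so that $\pi^\uparrow_o$ is injective. Its restriction $(\pi,U)$ with $\pi_o=\pi^\uparrow_o\circ\e_o$ is a net representation, hence quasi trivial by hypothesis. The key computation is that on a generator $(p,A)$, $p:a\to o$, one has $\pi^\uparrow_o(p,A)=U_p\,\pi_a(A)\,U_p^{*}$ (from (\ref{Bd:12}) with $\imath=\ad U$), so for a loop $q:o\to o$ the operator $U_q$ commutes with $\pi^\uparrow_o(p,A)$ if and only if $U_{\overline{p}*q*p}$ commutes with $\pi_a(A)$; since $\overline{p}*q*p$ is a loop over $a$, quasi triviality at the shifted pole $a$ delivers exactly this. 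Thus $U_q$ commutes with all of $\pi^\uparrow_o(\overline{\cA}_o)$, and the covariance relation (\ref{Cb:9}) becomes $\pi^\uparrow_o\circ\overline{\jmath}_{*,[q]}=\ad U_q\circ\pi^\uparrow_o=\pi^\uparrow_o$; injectivity of $\pi^\uparrow_o$ then forces $\overline{\jmath}_{*,[q]}=\mathrm{id}$ for every $[q]$, so the enveloping net bundle is trivial.

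The hard part is this last upgrade in the converse: quasi triviality only yields commutation of the holonomies with the comparatively small subalgebras $\pi_a(\cA_a)$, whereas triviality of the action requires commutation with the full fibres $\pi^\uparrow_o(\overline{\cA}_o)$. The conjugation identity $U_p^{*}U_qU_p=U_{\overline{p}*q*p}$, which converts a loop over $o$ into a loop over $a$, is what bridges this gap, and it is essential that quasi triviality be assumed at every pole and not merely at $o$; the faithfulness of the chosen representation, guaranteed by nondegeneracy, is precisely what lets one pass from commutation back to triviality of the action.
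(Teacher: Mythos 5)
Your proof is correct and follows essentially the same route as the paper: the forward direction is identical (triviality plus the morphism property of $\pi^\uparrow$ forces $U_q\in(\pi_a,\pi_a)$), and the converse likewise produces a faithful covariant representation of the holonomy dynamical system of $(\overline{\cA},\overline{\jmath})_K$ (the paper packages this as a faithful representation of the crossed product $\overline{\cA}_*\rtimes\pi_1^o(K)$) and uses quasi triviality plus faithfulness to kill the action $\overline{\jmath}_*$. The one point worth noting is to your credit: the conjugation identity $U_p^*U_qU_p=U_{\overline{p}*q*p}$, which upgrades commutation with the subalgebras $\pi_a(\cA_a)$ at all poles to commutation with the full fibre $\pi^\uparrow_o(\overline{\cA}_o)$, is exactly the step the paper's proof passes over in silence when it asserts $U^\rho_{[p]}\in(\rho,\rho)$, so your version is actually the more complete argument.
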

\begin{proof}
$(\Rightarrow)$  Let $(\pi,U)$ be a representation of a net $(\cA,\jmath)_K$ with trivial
enveloping net bundle. 
Let $(\pi^\uparrow,U)$ be the extension of this representation to $(\overline{\cA},\overline{\jmath})_K$. Because of triviality $\overline{\jmath}_p =\mathrm{id}_{\overline{\cA}_o}$ for any $o\in K$ and for any loop $p$ over $o$. Therefore 
$\mathrm{ad}_{U_p}\circ \pi^\uparrow_o = \pi^\uparrow_o\circ \overline{\jmath}_p =
\pi^\uparrow_o$. By the definition of $\pi^\uparrow$,  
the quasi triviality of $(\pi,U)$ follows.\\
\indent $(\Leftarrow)$ Consider the holonomy dynamical system 
$(\overline{\cA}_*,\pi^o_1(K), \overline{\jmath}_*)$. Let $\rho$ be a faithful representation 
of the crossed product $\overline{\cA}_*\rtimes\pi^o_1(K)$. 
$\rho$ defines a covariant representation 
$(\rho,U^\rho)$  of the holonomy dynamical system and this, in turns, defines 
a representation  of the enveloping net bundle, Lemma \ref{Cb:8}, and so a representation 
of the net $(\cA,\jmath)_K$,  Lemma \ref{Cb:7}. 
Since only quasi trivial representations of the net are allowed, 
$U^\rho_{[p]}\in (\rho,\rho)$ for any  $[p]\in\pi^o_1(K)$. So  
$\rho\circ \overline{\jmath}_{*,[p]}= \mathrm{ad}U^\rho_{[p]}\circ\rho = 
\rho$ for any  $[p]\in\pi^o_1(K)$. 
Since $\rho$ is faithful $\overline{\jmath}_{*,[p]}=\mathrm{id}_{\overline{\cA}_*}$ for any  $[p]\in\pi^o_1(K)$, and the proof follows. 
\end{proof}

The next result relates representations and states. 
\begin{lemma}
\label{Cb:11}
For any $\rC^*$-net bundle $(\cA,\jmath)_K$, the following properties are equivalent.
\begin{itemize}
\item[(i)]   $(\cA,\jmath)_K$ has states.   
\item[(ii)]  There is a representation $(\pi,U)$ of $(\cA,\jmath)_K$ on a Hilbert space
             $\cH$ and a family $\Omega=\{\Omega_a \in \cH \}$ such that 
             $U_{oa}\Omega_a=\Omega_o$ for all $o\leq a$.
\item[(iii)] The holonomy dynamical system $(\cA_*,\pi_1^o(K),\jmath_*)$ admits a covariant 
             representation with an invariant vector.
\end{itemize}
\end{lemma}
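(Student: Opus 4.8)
The plan is to route both nontrivial equivalences through the holonomy dynamical system $(\cA_*,\pi_1^o(K),\jmath_*)$, so that the correspondences already established in Lemma \ref{Ca:4} and Lemma \ref{Cb:8} do the structural work and only the matching of the vector data remains. Throughout I regard the vectors $\Omega_a$ in (ii) and the invariant vector in (iii) as unit vectors; since the inclusion operators $U$ are unitary, $\norm{\Omega_a}$ is independent of $a$, so this normalization is harmless.

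First I would prove (i) $\Leftrightarrow$ (iii). By Lemma \ref{Ca:4} the states of $(\cA,\jmath)_K$ are in bijection with the $\jmath_*$-invariant states of $(\cA_*,\pi_1^o(K),\jmath_*)$, so it suffices to show that this dynamical system has an invariant state exactly when it admits a covariant representation with an invariant vector. This is the standard covariant GNS argument: an invariant state $\varphi$ gives a covariant representation $(\pi_\varphi,U)$ in which the cyclic vector $\xi_\varphi$ is fixed by $U$, since $U_{[p]}\pi_\varphi(A)\xi_\varphi:=\pi_\varphi(\jmath_{*,[p]}A)\xi_\varphi$ is well defined and unitary by invariance and $U_{[p]}\xi_\varphi=\xi_\varphi$ on taking $A=\mathbbm{1}$; conversely, given a covariant representation $(\eta,V)$ with invariant unit vector $\xi$, the vector state $A\mapsto\langle\xi,\eta(A)\xi\rangle$ is invariant because $V_{[p]}\xi=\xi$. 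As $\pi_1^o(K)$ is discrete, no measurability issues arise.

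Next I would prove (ii) $\Leftrightarrow$ (iii). By Lemma \ref{Cb:8} representations of $(\cA,\jmath)_K$ correspond, up to equivalence, to covariant representations of $(\cA_*,\pi_1^o(K),\jmath_*)$, and under this correspondence the holonomy representation $U_*$ of $(\pi,U)$ (see (\ref{eq.u*})) coincides with the unitary representation $V$. Using the reduction described after (\ref{Cb:6}) I may assume all fibres coincide with a fixed $\cH$: that reduction is implemented by the unitary intertwiner $T_a=U_{p_{(a,o)}}$, and $U_{oa}\Omega_a=\Omega_o$ forces $U'_{oa}(T_a\Omega_a)=T_o\Omega_o$, so the parallel property is preserved. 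The key observation is that the condition $U_{oa}\Omega_a=\Omega_o$ for all $o\leq a$ is exactly parallel transport of $\Omega$: from $U_b=U_{\partial_0b|b|}U_{|b|\partial_1b}$ one checks $U_b\Omega_{\partial_1b}=\Omega_{\partial_0b}$, hence $U_p\Omega_{\partial_1p}=\Omega_{\partial_0p}$ for every path $p$. For (ii) $\Rightarrow$ (iii) this gives $U_{*,[p]}\Omega_o=U_p\Omega_o=\Omega_o$ for every loop $p$ at $o$, so $\xi:=\Omega_o$ is a $V$-invariant vector, and together with the covariant representation $\pi_*$ of (\ref{Cb:9}) this is (iii). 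For (iii) $\Rightarrow$ (ii), starting from $(\eta,V)$ with invariant unit vector $\xi$ and its associated net bundle representation $(\pi,U)$, I would set $\Omega_a:=U_{p_{(a,o)}}\xi$ along the fixed path frame; then $\Omega_o=\xi$, and for $e\leq a$ the inclusion operator is transport along a nerve simplex, $U_{ea}=U_{\overline{(ae)}}$, so $U_{ea}\Omega_a=U_{p_{(e,o)}}U_{\ell}\xi$ with $\ell:=p_{(o,e)}*\overline{(ae)}*p_{(a,o)}$ a loop at $o$; since $U_\ell\xi=V_{[\ell]}\xi=\xi$ this equals $\Omega_e$, which is (ii).

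I expect the main obstacle to be the bookkeeping in (ii) $\Leftrightarrow$ (iii): correctly identifying the inclusion operators $U_{oa}$ with parallel transport along the corresponding nerve $1$-simplices, and checking that the discrepancy between the two paths from $o$ to a given $e$ is a genuine loop at $o$, so that homotopy invariance of the unitary cocycle together with $V$-invariance of $\xi$ may be applied. Everything else reduces to Lemma \ref{Ca:4}, Lemma \ref{Cb:8}, and the covariant GNS construction.
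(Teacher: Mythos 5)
Your proof is correct, but it is organized differently from the paper's. The paper proves the cyclic chain $(i)\Rightarrow(ii)\Rightarrow(iii)\Rightarrow(i)$: for $(i)\Rightarrow(ii)$ it performs a \emph{fibre-wise} GNS construction on the net itself, taking the GNS triple $(\pi_o,\cH_o,\Omega_o)$ of each $\omega_o$ and gluing these by defining the inclusion operators directly, $U_{ao}\pi_o(A)\Omega_o := \pi_a(\jmath_{ao}(A))\Omega_a$; then $(ii)\Rightarrow(iii)$ is the same parallel-transport observation you make; and $(iii)\Rightarrow(i)$ extracts the vector state $\varphi_a(\cdot) := (\zeta,\eta_{*,a}(\cdot)\zeta)$ from the representation $(\eta_*,V_*)$ of $(\cA_{**},\jmath_{**})_K$ and pulls it back through the isomorphism $\tau$. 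You instead prove the two equivalences $(i)\Leftrightarrow(iii)$ and $(ii)\Leftrightarrow(iii)$, pushing \emph{all} the GNS work down to the holonomy dynamical system: Lemma \ref{Ca:4} converts $(i)$ into the existence of an invariant state of $(\cA_*,\pi_1^o(K),\jmath_*)$, and the standard covariant GNS construction for a single $\rC^*$-dynamical system converts that into $(iii)$; your $(iii)\Rightarrow(ii)$ is then a direct path-frame computation (the check that $U_{ea}\Omega_a = U_{p_{(e,o)}}U_\ell\xi$ with $\ell$ a loop at $o$ is exactly right, and in the constant picture of Lemma \ref{Cb:8} your $\Omega_a = U_{p_{(a,o)}}\xi$ in fact collapses to the constant family $\xi$). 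What your route buys is economy: every step is either an already-established dictionary entry (Lemmas \ref{Ca:4}, \ref{Cb:8}) or a textbook one-algebra fact, and you also handle the normalization of the vectors explicitly, which the paper leaves implicit (as stated, $(ii)$ and $(iii)$ are vacuous for the zero vector). What the paper's route buys is the explicit net-level GNS construction with its compatible family of cyclic vectors, which is reused later for quasi-representations and vector states in Appendix \ref{AppRep}.
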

\begin{proof}
$(i)\Rightarrow(ii)$ Let $\omega$ be a state of $(\cA,\jmath)_K$. 
Given  $o\in K$, let $(\pi_o,\cH_o,\Omega_o)$ be the 
GNS representation associated with the state $\omega_o$ of the algebra $\cA_o$. For any inclusion 
$o\leq a$ define 
\[
 U_{ao}\pi_o(A)\Omega_o:= \pi_a({\jmath}_{ao}(A))\Omega_a \ , \qquad A\in\cA_o \ . 
\]
A routine calculation shows that the $U_{ao}:\cH_o\to\cH_a$ are unitary operators with  
$U_{ao}\, \pi_o(\cdot )= \pi_a\circ {\jmath}_{ao}(\cdot) \,U_{ao}$. Moreover  
\[
U_{ea}\,U_{ao}\pi_o(\cdot)\Omega_o= U_{ea}\pi_a({\jmath}_{ao}(\cdot))\Omega_a = 
\pi_a({\jmath}_{ea}{\jmath}_{ao}(\cdot))\Omega_e = U_{eo}\pi_o(\cdot)\Omega_o \ . 
\]
Clearly, by  definition we have $U_{ao}\Omega_o=\Omega_a$.   
$(ii)\Rightarrow (iii)$ Define $\pi_* : \cA_* \to \cB(\cH)$ as in (\ref{Cb:9})
and consider $\Omega_o \in \cH$. Then the condition $U_{ao}\Omega_o = \Omega_a$,
$o \leq a$, implies that $U_p \Omega_o = \Omega_o$ for any loop $p$ over  $o$,
where $U_p$ is defined in (\ref{Cb:6}).
$(iii)\Rightarrow(i)$ 
Let $(\eta,V)$ be a covariant representation of 
$(\cA_*,\pi_1^o(K),\jmath_*)$ on the Hilbert space $\cH$ 
and $\zeta \in \cH$ be a $V$-invariant vector. 
We consider the representation $(\eta_*,V_*)$ of $(\cA_{**},\jmath_{**})_K$ 
constructed as in Lemma \ref{Cb:8} and define 
$\varphi_a(A):=(\zeta, \eta_{*,a}(A) \zeta)$, 
$A \in \cA_{**,a} = \cA_o$, 
$a \in K$. 
By $V$-invariance of $\zeta$, and covariance of $(\eta_*,V_*)$, we find
\[
\varphi_{\tilde a} \circ \jmath_{**, \tilde aa}(A) = 
(\zeta , ( \eta_{*,a} \circ \jmath_{**,\tilde aa}(A) ) \zeta) =
(V_{*,\tilde aa}\zeta , \eta_{*,a}(A) V_{*,\tilde aa}\zeta) =
\varphi_a(A)
\ .
\]
Thus $\varphi$ is a state of $(\cA_{**},\jmath_{**})_K$, and composing 
with the isomorphism with $(\cA,\jmath)_K$ yields the desired state.
\end{proof}

We conclude the section giving an easy consequence of the above results
in the setting of $G$-actions. 
Let $G$ be a symmetry group for $K$ and $(\cA,\jmath,\alpha)_K$ a $G$-covariant net. 
A \emph{$G$-covariant representation} of $(\cA,\jmath,\alpha)_K$ is a
representation $(\pi,U)$ of $(\cA,\jmath)_K$ such that the underlying 
family of Hilbert spaces $\cH := \{ \cH_o \}$ is endowed with unitary operators
$\Gamma^g_o : \cH_o \to \cH_{go}$, $g \in G$, $o \in K$, satisfying the relations 

\begin{equation}
\label{Cb:12}
\begin{array}{rclr}
\Gamma^h_{go}\, \Gamma^g_o & = & \Gamma^{hg}_o \ , & \ g,h\in G \ , \ \ o\in K  \ ,  \\[5pt]
\ad \Gamma^g_o \circ \pi_o & = & \pi_{go} \circ \alpha^g_o \ , &  \ g\in G \ , \ \ o\in K \ , \\[5pt]
\Gamma^g_{\tilde o} \, U_{\tilde oo} & =&  U_{\tilde go \ go} \, \Gamma^g_o \ , & \   
o\leq \tilde o \ , \ \ g\in G \ .
\end{array}
\end{equation}
Notice that when $(\pi,U)$ is topologically trivial,  $\Gamma$ induces a unitary 
representation of the symmetry group, since the inclusion operators $U_{ao}$ are constant.
Furthermore, recall that, when $G$ is a continuous symmetry group of the  poset,  
we assume, by convention, that the $G$-action on the net is continuous (see \S \ref{Ba}). 
Similarly, under  these circumstances we assume that 
for any $G$-covariant representation $(\pi,U,\Gamma)$, $\Gamma$  is \emph{strongly continuous}.
This amounts to saying that if $\{g_\lambda\}_{\Lambda}$ is a net in $G$ converging 
to the identity of the group then, for any  $o\in K$ and $a\in K$ with $a > o$, there exists  
$\lambda_a\in \Lambda$ such that $g_{\lambda}o\leq a$ for any $\lambda\in\Lambda$ with 
$\lambda\geq \lambda_a$ and  
\begin{equation}
\label{Cb:12a}
\| U_{a\, g_\lambda o}\, \Gamma^{g_\lambda}_o \Omega -\Omega\| \to 0 \ ,\qquad \forall \Omega\in\cH_a \ . 
\end{equation}
We have the following result. 
\begin{proposition}
\label{Cb:13}
Let $K$ be a poset with amenable fundamental group and 
$G$ an amenable symmetry group of $K$. 
Then every injective, $G$-covariant net of $\rC^*$-algebras over $K$ has a
$G$-covariant representation $(\pi, U,\Gamma)$. In particular, if 
$G$ is a continuous symmetry group of $K$, then $\Gamma$ is strongly continuous. 
\end{proposition}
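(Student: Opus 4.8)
The plan is to reduce the whole question to a covariant GNS construction performed on the enveloping net bundle, and then to transport the resulting representation back to the net along the canonical embedding. Since an injective net is in particular nondegenerate (Definition \ref{Bd:14}), its enveloping net bundle $(\overline{\cA},\overline{\jmath})_K$ is a nonvanishing $\rC^*$-net bundle; by Remark \ref{Bd:18} it carries a $G$-action $\overline{\alpha}$ making it $G$-covariant, the canonical embedding $\e$ is $G$-covariant, and $\overline{\alpha}$ is continuous whenever $\alpha$ is and $G$ is a continuous symmetry group. Because $\pi_1^o(K)$ is amenable, Proposition \ref{Ca:5} shows that $(\overline{\cA},\overline{\jmath})_K$ has states, and since $G$ is amenable Proposition \ref{Ca:6}$(i)$ then provides a $G$-invariant state $\overline{\varphi}=\{\overline{\varphi}_o\}$ of $(\overline{\cA},\overline{\jmath},\overline{\alpha})_K$.

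First I would build a nonzero $G$-covariant representation $(\Pi,U,\Gamma)$ of the net bundle out of $\overline{\varphi}$, extending the construction of Lemma \ref{Cb:11}, $(i)\Rightarrow(ii)$, so as to include the symmetry. For each $o$ let $(\Pi_o,\cH_o,\Omega_o)$ be the GNS triple of $\overline{\varphi}_o$, and set
\[
U_{ao}\,\Pi_o(A)\Omega_o := \Pi_a(\overline{\jmath}_{ao}(A))\Omega_a , \qquad
\Gamma_o^g\,\Pi_o(A)\Omega_o := \Pi_{go}(\overline{\alpha}_o^g(A))\Omega_{go} .
\]
The invariances $\overline{\varphi}_a\circ\overline{\jmath}_{ao}=\overline{\varphi}_o$ and $\overline{\varphi}_{go}\circ\overline{\alpha}_o^g=\overline{\varphi}_o$ make both maps isometric, and since $\overline{\jmath}_{ao}$ and $\overline{\alpha}_o^g$ are ${}^*$-isomorphisms they are in fact unitary; this is exactly the point where being a net bundle, rather than a mere net, is essential, for on a general net the GNS inclusion maps would only be isometries onto proper subspaces. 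A routine verification on the cyclic vectors then gives the cocycle relation (\ref{Cb:2}) and all of (\ref{Cb:12}): the first relation of (\ref{Cb:12}) follows from $\overline{\alpha}_{go}^h\circ\overline{\alpha}_o^g=\overline{\alpha}_o^{hg}$, the second from multiplicativity of $\Pi$, and the third from the compatibility $\overline{\alpha}_{\tilde o}^g\circ\overline{\jmath}_{\tilde o o}=\overline{\jmath}_{g\tilde o\,go}\circ\overline{\alpha}_o^g$. As $\overline{\varphi}$ is a state, $(\Pi,U,\Gamma)$ is nonzero.

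Next I would transport this along $\e$: setting $\pi_o:=\Pi_o\circ\e_o$ and keeping $U$ and $\Gamma$, the morphism identity $\e_a\circ\jmath_{ao}=\overline{\jmath}_{ao}\circ\e_o$ together with the $G$-covariance $\overline{\alpha}_o^g\circ\e_o=\e_{go}\circ\alpha_o^g$ of $\e$ (Remark \ref{Bd:18}) shows at once that $(\pi,U,\Gamma)$ satisfies (\ref{Cb:1}), (\ref{Cb:2}) and (\ref{Cb:12}), so it is a $G$-covariant representation of $(\cA,\jmath,\alpha)_K$; equivalently one may invoke Lemma \ref{Cb:7} to descend the representation from the enveloping net bundle to the net. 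The main obstacle is the final continuity statement. Assuming $G$ continuous and $\alpha$ continuous, I would pass to a fixed Hilbert space as in the discussion following (\ref{Cb:6}), which preserves $G$-covariance, and then test strong continuity on the dense set of cyclic vectors: for $v=\Pi_o(W)\Omega_o$ the definitions give
\[
\| U_{\hat o\,g_\lambda o}\,\Gamma_o^{g_\lambda}v - U_{\hat o\,o}v\|
\le \| \overline{\jmath}_{\hat o\,g_\lambda o}(\overline{\alpha}_o^{g_\lambda}(W)) - \overline{\jmath}_{\hat o\,o}(W)\| ,
\]
so that the strong continuity of $\Gamma$ demanded by (\ref{Cb:12a}) reduces precisely to the norm continuity of $\overline{\alpha}$ established in Remark \ref{Bd:18}; uniform boundedness of the unitaries then extends the convergence from cyclic vectors to all of $\cH$. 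I expect this last reduction to be the delicate step, since it is where operator strong continuity must be matched, through the cyclic vectors, against the norm continuity of the action on the fibres.
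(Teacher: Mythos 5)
Your proposal is correct and follows essentially the same route as the paper: reduction to the enveloping net bundle via the $G$-covariant embedding of Remark \ref{Bd:18}, a $G$-invariant state obtained from the two amenability hypotheses (Prop. \ref{Ca:5} and Prop. \ref{Ca:6}, which is exactly what Prop. \ref{Ca:6}$(ii)$ packages), the GNS construction of Lemma \ref{Cb:11} with $\Gamma^g_o(\pi_o(T)\Omega_o):=(\pi_{go}\circ\overline{\alpha}^g_o(T))\Omega_{go}$, and strong continuity reduced to the norm continuity of $\overline{\alpha}$. Your explicit estimate on cyclic vectors, extended by uniform boundedness of the unitaries, is precisely the ``routine calculation'' the paper leaves implicit.
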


\begin{proof}
Let $(\cA,\jmath,\alpha)_K$ be injective. Then by Rem.\ref{Bd:18} it suffices
to look for $G$-covariant representations of the enveloping net bundle
$(\overline{\cA},\overline{\jmath},\overline{\alpha})_K$. Now, by Prop.\ref{Ca:6}.ii   $(\overline{\cA},\overline{\jmath},\overline{\alpha})_K$ has a $G$-invariant state $\varphi$,
which, by Lemma \ref{Cb:11}, induces a GNS representation $\pi$.
Since $\varphi$ is $G$-invariant, setting
\[
\Gamma^g_o (\pi_o(T) \Omega_o) := ( \pi_{go} \circ \overline{\alpha}^g_o(T) ) \Omega_{go}
\ \ , \qquad  T \in \overline{\cA}_o \ , 
\]
for $o\in K$ and $g\in G$,  we easily find that $\pi$ is $G$-covariant.
Finally note that if $G$ is a continuous symmetry group 
of $K$, then  the action $\alpha$ is continuous by assumption (see \S \ref{Ba}).
As observed in Remark \ref{Bd:18}, the action $\overline{\alpha}$ of $G$ on the enveloping net bundle
is continuous as well. Using this, a routine calculation shows that $\Gamma$ 
is strongly continuous. 
\end{proof}


\section{Injectivity}
\label{D}
Injectivity  has been related to "outer properties" of the net: 
the existence of faithful embedding into $\rC^*$-net bundles or, in particular, 
the existence of faithful representations. In the present section we analyze how injectivity 
relates to 'inner properties' of the net. More precisely, our aim is to find   
conditions on the net itself, which  are either necessary or 
sufficient for injectivity. We shall discover that injectivity imposes 
a cohomological condition on the net. This will allow us 
to provide examples of degenerate nets, noninjective nets, and of injective nets
having no Hilbert space representation (for a further example of this last case see
Ex.\ref{ex_free}). A remarkable result is that 
any $\rC^*$-net bundle over $S^1$ whose \v Cech cocycle is globally defined is trivial.\bigskip


\

Our aim is to introduce a  simplicial set which will serve for defining the  \v Cech cocycle
mentioned above.  To begin with, let $K$ be a poset, and $S,F$ nonempty subsets of $K$. 
We shall write $F\leq S$ whenever any element of $F$ is smaller than any  elements
of $S$.  We now are ready to define the simplicial set $\Si^\circ_*(K)$:   for $n\geq 0$,  an \emph{$n$-simplex} 
is a string   $(F;o_{n+1},o_n,\ldots, o_1)$ where $o_{n+1},o_n,\ldots, o_1$  
are  element of $K$, called the \emph{vertices} of the $n$-simplex, and $F$ is a nonempty subset of $K$, called the \emph{support} of the $n$-simplex, 
satisfying  the relation  $F\leq \{o_{n+1},o_n,\ldots, o_1\}$. This simplicial set is symmetric 
since the string whose vertices are a permutation of the vertices of an $n$-simplex 
$(F;o_{n+1},o_n,\ldots, o_1)$ and whose support is  $F$ is an $n$-simplex as well. As usual 
$\Si^\circ_n(K)$ will denote the set of $n$-simplices. \\ 
\indent We denote  the set 
of  symmetric subsimplicial sets $D_*$ of  $\Si^\circ_*(K)$, with $D_i\subseteq \Si^\circ_i(K)$
for any $i$, by $\mathrm{Sub}(\Sigma^\circ_*(K))$.  
Note that $\mathrm{Sub}(\Sigma^\circ_*(K))$ is closed under 
finite or infinite union, and  finite or infinite intersection, if not empty.   
Elements of $\mathrm{Sub}(\Sigma^\circ_*(K))$
are, for instance,  $\Si^\circ_*(S)$  for any nonempty subset  $S\subseteq K$. 
%
%
For any $0$-simplex $(F,o)$ of $D_0$, we define the $\rC^*$-algebra
\[
\cA^F_o := \rC^* \{ \jmath_{oa}(\cA_a) \ | \ a \in F \}\subseteq \cA_o \ .
\]
\begin{definition}
\label{Da:0}
Let $(\cA,\jmath)_K$ be a net of $\rC^*$-algebras and $D_*\in \mathrm{Sub}(\Sigma^\circ_*(K))$.  
A \textbf{ (generalized) \v Cech cocycle} of $(\cA,\jmath)_K$  defined over  $D_*$  is a family 
$\zeta:=\{ \zeta^F_{\tilde oo} \ | \ (F;\tilde o,o)\in D_1\}$ of  $^*$-isomorphisms 
\[ 
\zeta^F_{\tilde oo} : \cA^F_o \to \cA^F_{\tilde o} \ , 
\qquad \forall   (F;o,\tilde o)\in D_1  \ , 
\]
satisfying, for any $1$-simplex $(F;\tilde o,o)$,  the relation
\begin{equation}
\label{Da:1}
\zeta^F_{\tilde oo} \circ \jmath_{oa} = \jmath_{\tilde oa}, \qquad \forall a\in F \ .
\end{equation}
%
%
\end{definition}
Before showing the relation to injectivity, it is 
convenient to draw  some  interesting  consequences of the above definition.  \emph{First},  
the defining relation implies that   $\zeta$ satisfies 
\begin{equation}
\label{Da:2}
\zeta^F_{o_3o_2}\circ \zeta^F_{o_2o_1} = \zeta^F_{o_3o_1} \ , \qquad   (F;o_1,o_2,o_3)\in D_2 \ , 
\end{equation}
and 
\begin{equation}
\label{Da:2a}
\zeta^F_{o_2o_1} = {\zeta^F_{o_1o_2}}^{-1} \ , \qquad   (F;o_1,o_2)\in D_1 \ . 
\end{equation}
The first relation, the \emph{cocycle equation}\footnote{This cohomology generalizes the \v Cech cohomology for  the dual poset introduced  in \cite{RRV}.}, follows observing 
that  
\[
\zeta^F_{o_3o_2}\circ \zeta^F_{o_2o_1}\circ \jmath_{o_1o} = 
\zeta^F_{o_3o_2}\circ \jmath_{o_2o} = \jmath_{o_3o} = \zeta^F_{o_3o_1}\circ \jmath_{o_1o} \ , 
\]
for any $o\in F$, where we have applied (\ref{Da:1}). A similar reasoning leads to 
the second relation. \emph{Secondly}, the \v Cech cocycle reduces 
to the inclusion maps when  defined on 1-simplices $(F;a,o)$  such that $o\leq a$, that is 
\[
\zeta^F_{ao}=\jmath_{ao}\restriction \cA^F_o \ , 
\]
as an easy consequence of equation (\ref{Da:1}).
\begin{lemma}
\label{Da:4}
Any net $(\cA,\jmath)_K$ has a unique \v Cech cocycle.  
\end{lemma}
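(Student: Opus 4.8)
The plan is to treat the two halves of the statement separately, proving uniqueness first because it is essentially immediate and because it dictates the only possible formula for existence. For uniqueness, fix a $1$-simplex $(F;\tilde o,o)$. By construction $\cA^F_o$ is generated as a $\rC^*$-algebra by the subalgebras $\jmath_{oa}(\cA_a)$, $a\in F$, and the defining relation (\ref{Da:1}) prescribes the value of $\zeta^F_{\tilde o o}$ on every such generator, namely $\zeta^F_{\tilde o o}(\jmath_{oa}(A))=\jmath_{\tilde o a}(A)$. Since a $^*$-homomorphism between $\rC^*$-algebras is determined by its restriction to a generating set, any two cocycles agree on each $\zeta^F_{\tilde o o}$ and hence coincide; so at most one \v Cech cocycle can exist.

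For existence I would build $\zeta^F_{\tilde o o}$ starting from the comparable case. If $o\leq\tilde o$ I set $\zeta^F_{\tilde o o}:=\jmath_{\tilde o o}\restriction\cA^F_o$: the net relation $\jmath_{\tilde o o}\circ\jmath_{oa}=\jmath_{\tilde o a}$ yields (\ref{Da:1}), faithfulness of $\jmath_{\tilde o o}$ gives injectivity, and since $\jmath_{\tilde o o}$ carries the generators $\jmath_{oa}(\cA_a)$ of $\cA^F_o$ precisely onto the generators $\jmath_{\tilde o a}(\cA_a)$ of $\cA^F_{\tilde o}$ the image is all of $\cA^F_{\tilde o}$; thus $\zeta^F_{\tilde o o}$ is a $^*$-isomorphism. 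For a general $1$-simplex I would connect $o$ to $\tilde o$ by a chain $o=e_0,e_1,\dots,e_n=\tilde o$ of elements of the upper set $\{e\in K : F\leq e\}$ in which consecutive terms are comparable, and define $\zeta^F_{\tilde o o}$ as the corresponding composite of comparable-case isomorphisms (each factor $\jmath_{e_{i+1}e_i}\restriction$ or its inverse). Relation (\ref{Da:1}) for the composite then follows from the relations for the factors, and the cocycle identity (\ref{Da:2}) is automatic.

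The hard part will be the well-definedness of this extension. One must verify that the prescribed assignment on generators respects \emph{all} the $\rC^*$-relations holding in $\cA^F_o$, and — the genuinely delicate point — that the transported isomorphism is independent of the connecting chain of vertices chosen in $\{e:F\leq e\}$. This independence is a homotopy-invariance property of the poset, and it is exactly here that the topology of $K$ enters and where the construction can become obstructed when no common refinement of $o$ and $\tilde o$ above $F$ is available; it is this obstruction that later distinguishes injective nets from the rest. I expect the algebraic checks feeding into (\ref{Da:1})–(\ref{Da:2}) to be routine consequences of the net relations and faithfulness, so that the entire substance of the lemma concentrates in establishing this path-independence on the relevant sub-simplicial set.
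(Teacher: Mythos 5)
Your uniqueness computation (any two candidate maps for a fixed $1$-simplex agree on the generators $\jmath_{oa}(\cA_a)$, $a\in F$, because (\ref{Da:1}) prescribes their values there) is exactly the argument the paper uses; but your existence half rests on a misreading of what the lemma asserts, and this is a genuine gap. By Definition \ref{Da:0} a \v Cech cocycle is a family defined over \emph{some} $D_*\in\mathrm{Sub}(\Si^\circ_*(K))$, and the lemma says two things: (a) at least one cocycle exists, over some domain, and (b) any two cocycles agree on the overlap of their domains, so that the union of all of them is a single maximal cocycle --- this glued object is ``the'' \v Cech cocycle, and the gluing is precisely what makes the later notion of ``the domain'' well defined. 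The lemma does \emph{not} claim that $\zeta^F_{\tilde oo}$ exists for every $1$-simplex, and no proof can: in Example \ref{Da:12a} (the $2$-cylinder, a $\rC^*$-net bundle with fibres $\bM_2(\cC)$) there is no isomorphism $\zeta^{\{o,a\}}_{xy}$ satisfying (\ref{Da:1}) at all. So your plan --- construct $\zeta^F_{\tilde oo}$ for a general $1$-simplex and let the well-definedness of that construction carry ``the entire substance of the lemma'' --- aims at a statement that is false. For existence it suffices to exhibit one cocycle on one legitimate sub-simplicial set: your comparable case $\zeta^F_{\tilde oo}:=\jmath_{\tilde oo}$ restricted to $\cA^F_o$ already does this (the paper instead uses pairs with a common minorant), provided you say which $D_*$ it lives on; and the actual content of ``unique'' is the patching step (b), which your proposal never carries out.

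Moreover, both ``delicate points'' you defer are non-issues for your chain construction. The composite of comparable-case maps is a composition of already well-defined ${^*}$-isomorphisms, so there are no $\rC^*$-relations in $\cA^F_o$ left to check; and independence of the chain is immediate from your own uniqueness observation, since any two composites satisfy (\ref{Da:1}) for the same $1$-simplex and hence agree on the generators of $\cA^F_o$. No homotopy invariance of the poset enters at this point. The genuine phenomenon hiding behind your ``obstruction'' is simply that the endpoints of a $1$-simplex need not be joined by any chain of pairwise comparable elements inside the upper set $\{e\in K : F\leq e\}$ (in $C_2$ that upper set is the incomparable pair $\{x,y\}$); where no chain exists your family is simply undefined there, which is permitted --- it only delimits the domain of the cocycle. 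How large that domain is, and how it relates to injectivity, is the subject of Lemma \ref{Da:9} and Corollary \ref{Da:11}, not of Lemma \ref{Da:4}.
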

\begin{proof}
\emph{Existence}. Take a pair $o_1,o_2\in K$ having a common minorant $o$. Consider 
$\Si^\circ_*(\{o_1,o_2,o\})$. Any 1-simplex of $\Si^\circ_1(\{o_1,o_2,o\})$  has the form
$(o;\tilde a, a)$ with $a,\tilde a\in\{o_1,o_2\}$. Then, it is easily seen  that the collection 
$\zeta:=\{\zeta^o_{\tilde a a} \, , \, a,\tilde a\in\{o_1,o_2\}\}$, where 
\[
\zeta^{o}_{\tilde a a} :
\jmath_{a o}(\cA_o) \to \jmath_{\tilde a o}(\cA_o)
\ \ , \ \
\zeta^{o}_{\tilde a a} \circ \jmath_{ao}(A) := \jmath_{\tilde ao}(A)
\ , \
\forall A\in\cA_o \ , 
\ 
\]
is a \v Cech cocycle defined over $\Si^\circ_*(\{o_1,o_2,o\})$.  \\
\indent \emph{Uniqueness}. Let $\zeta^\alpha$, $\alpha\in\Lambda$, 
be the collection of all \v Cech
cocycles of the net defined, respectively, over $D^\alpha_*$, $\alpha\in\Lambda$. Set
\[
\zeta^F_{\tilde oo}:= \zeta^{\alpha,F}_{\tilde oo} \  \qquad  if \qquad  (F;\tilde o,o)\in D^\alpha_1 \ .  
\]
The definition is well posed: if $(F;\tilde o,o)\in D^\alpha_1\cap D^\beta_1$  for $\alpha,\beta\in\Lambda$,   then by (\ref{Da:1}), 
$\zeta^{\alpha,F}_{\tilde oo}\circ \jmath_{oa} = \jmath_{\tilde oa} = 
\zeta^{\beta,F}_{\tilde oo} \circ \jmath_{oa}$, for any $a\in F$.
So $\zeta^{\alpha,F}_{\tilde oo}=\zeta^{\beta,F}_{\tilde oo}$, because they coincide 
on the generators of the algebra $\cA^F_{o}$. Therefore $\zeta$ is a \v Cech cocycle defined 
over $\cup_{\alpha \in \Lambda} D^\alpha_*$. 
\end{proof}
We have established the  existence and the uniqueness of the \v Cech cocycle of a net.
Moreover, it is clear by the above proof that 
$\cup_{\alpha \in \Lambda} D^\alpha_*$  is the largest  element 
of $\mathrm{Sub}(\Si^\circ_*(K))$ where the \v Cech cocycle is defined. 
\begin{definition} 
Given a net $(\cA,\jmath)_K$. 
We refer to the largest element of $\mathrm{Sub}(\Si^\circ_*(K))$ where 
the \v Cech cocycle of the net is defined as the \textbf{the domain} of the cocycle; 
we shall say that the cocycle is \textbf{globally defined} if the domain equals $\Si^\circ_*(K)$. 
\end{definition}
From now on our purpose will be to characterize the domain of the \v Cech cocycle
of the net and to establish the relation to injectivity. 
Part of the information about the domain of the \v Cech cocycle is transmitted 
under a  faithful embedding. This is the content of the next lemma. 
\begin{lemma}
\label{Da:5}
Let $\psi:(\cA,\jmath)_K\to (\cB,\imath)_K$ be a monomorphism.  
Then the domain $D_*$ of the \v Cech cocycle $\xi$ of $(\cB,\imath)_K$ is contained 
in the domain the \v Cech cocycle of $(\cA,\jmath)_K$. 
\end{lemma}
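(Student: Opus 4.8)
The plan is to show that the unique \v Cech cocycle $\zeta$ of $(\cA,\jmath)_K$ is already defined over the whole domain $D_*$ of $\xi$, by transporting $\xi$ backwards through the monomorphism $\psi$. Once a \v Cech cocycle of $(\cA,\jmath)_K$ has been exhibited over $D_*$, the characterization of the domain as the largest element of $\mathrm{Sub}(\Si^\circ_*(K))$ on which the cocycle is defined, together with the uniqueness in Lemma \ref{Da:4}, forces $D_*$ to be contained in the domain of $\zeta$.

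First I would record the compatibility of $\psi$ with the subalgebras $\cA^F_o$. Since $\psi$ is a morphism over $K$, one has $\psi_o\circ\jmath_{oa}=\imath_{oa}\circ\psi_a$ for $a\leq o$; and since each $\psi_o$ is a faithful, hence isometric, $^*$-morphism, its image is a $\rC^*$-subalgebra and it carries the $\rC^*$-algebra generated by a set onto the $\rC^*$-algebra generated by the image. Thus for every $0$-simplex $(F,o)$ one gets $\psi_o(\cA^F_o)=\rC^*\{\imath_{oa}(\psi_a(\cA_a)) : a\in F\}\subseteq \cB^F_o$, where $\cB^F_o:=\rC^*\{\imath_{oa}(\cB_a):a\in F\}$. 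In particular $\psi_o$ restricts to a $^*$-isomorphism $\cA^F_o\to\psi_o(\cA^F_o)$, whose inverse I denote $\psi_o^{-1}$.

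Next, for each $1$-simplex $(F;\tilde o,o)\in D_1$ I would set
\[
\zeta^F_{\tilde oo}:=\psi_{\tilde o}^{-1}\circ \xi^F_{\tilde oo}\circ \psi_o \ .
\]
The point to verify is that this composition is meaningful and is a $^*$-isomorphism onto $\cA^F_{\tilde o}$. Using the defining relation (\ref{Da:1}) for $\xi$, the map $\xi^F_{\tilde oo}$ sends each generator $\imath_{oa}(\psi_a(A))$ of $\psi_o(\cA^F_o)$ to $\imath_{\tilde oa}(\psi_a(A))=\psi_{\tilde o}(\jmath_{\tilde oa}(A))$; being a $^*$-isomorphism, it therefore carries $\psi_o(\cA^F_o)$ exactly onto $\psi_{\tilde o}(\cA^F_{\tilde o})$, which is precisely the domain of $\psi_{\tilde o}^{-1}$. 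Composing the three $^*$-isomorphisms $\psi_o$, $\xi^F_{\tilde oo}|_{\psi_o(\cA^F_o)}$ and $\psi_{\tilde o}^{-1}$ yields a $^*$-isomorphism $\zeta^F_{\tilde oo}:\cA^F_o\to\cA^F_{\tilde o}$, and the required relation (\ref{Da:1}) is immediate: for $a\in F$ and $A\in\cA_a$ one computes $\zeta^F_{\tilde oo}(\jmath_{oa}(A))=\psi_{\tilde o}^{-1}(\xi^F_{\tilde oo}(\imath_{oa}(\psi_a(A))))=\psi_{\tilde o}^{-1}(\psi_{\tilde o}(\jmath_{\tilde oa}(A)))=\jmath_{\tilde oa}(A)$.

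Finally, since $D_*$ is a symmetric subsimplicial set and the family $\{\zeta^F_{\tilde oo} : (F;\tilde o,o)\in D_1\}$ satisfies (\ref{Da:1}), it is a genuine \v Cech cocycle of $(\cA,\jmath)_K$ over $D_*$; the identities (\ref{Da:2}) and (\ref{Da:2a}) follow automatically, as observed after Definition \ref{Da:0}. By uniqueness (Lemma \ref{Da:4}) this family is a restriction of $\zeta$, so $D_*$ lies inside the domain of $\zeta$, which is the assertion. I expect the only delicate step to be the image-tracking in the third paragraph, namely checking that $\xi^F_{\tilde oo}$ maps $\psi_o(\cA^F_o)$ precisely onto $\psi_{\tilde o}(\cA^F_{\tilde o})$ so that $\psi_{\tilde o}^{-1}$ may legitimately be applied; the rest is a formal composition of isomorphisms.
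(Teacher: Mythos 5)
Your proposal is correct and follows essentially the same route as the paper: both transport $\xi$ backwards via $\zeta^F_{\tilde oo}:=\psi_{\tilde o}^{-1}\circ \xi^F_{\tilde oo}\circ \psi_o$, using the computation $\xi^F_{\tilde oo}\circ \psi_o \circ \jmath_{oa}=\psi_{\tilde o}\circ \jmath_{\tilde oa}$ to see that $\xi^F_{\tilde oo}$ carries $\psi_o(\cA^F_o)$ onto $\psi_{\tilde o}(\cA^F_{\tilde o})$, and then invoke uniqueness of the \v Cech cocycle (Lemma \ref{Da:4}). Your version merely spells out the image-tracking step that the paper states more tersely.
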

\begin{proof}
Given $(F;\tilde o,o)\in D_1$ we have
\[
\xi^F_{\tilde oo}\circ \psi_o \circ \jmath_{oa} = 
\xi^F_{\tilde oo}\circ \imath_{oa} \circ \psi_a = \imath_{\tilde o a} \circ \psi_a   = 
\psi_{\tilde o}\circ \jmath_{\tilde o a}
\ , 
\] 
for any $a\in F$,  where we have used equation (\ref{Da:1}). So 
$(\xi^F_{\tilde oo}\circ \psi_o)(\cA^F_o) = \psi_{\tilde o}(\cA^F_{\tilde o})$. This  
and the uniqueness of the \v Cech cocycle, implies that the composition  
\[
{\psi}^{-1}_{\tilde o}\circ \xi^F_{\tilde oo}\circ \psi_o \ , \qquad (F;\tilde o,o)\in D_1 \ ,  
\]
is the \v Cech cocycle of the net $(\cA,\jmath)_K$, and the proof follows. 
\end{proof}
We now make a first  step toward the understanding of the r\^ole played by 
the \v Cech cocycle in the theory.  
\begin{lemma}
\label{Da:6}
Let $(\cA,\jmath)_K$ be a $\rC^*$-net bundle and let $D_*$ be the domain of the  \v Cech cocycle. Then given $(F;\tilde o,o)\in D_1$
we have
\begin{equation}
\label{Da:7}
\zeta^F_{\tilde oo}= \jmath_{\tilde oa}\circ {\jmath}_{ao} \ , \qquad \forall a\in F \ , 
\end{equation}
and  
\begin{equation}
\label{Da:8}
{\jmath}_{a_2o}\circ {\jmath}_{oa_1} =  {\jmath}_{a_2\tilde o}\circ {\jmath}_{\tilde oa_1} \, \qquad \forall a_1,a_2\in F \ .  
\end{equation}
\end{lemma}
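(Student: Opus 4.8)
The plan is to exploit the fact that in a $\rC^*$-net bundle every inclusion map is a $^*$-isomorphism. First I would observe that, for each $a\in F$, the map $\jmath_{oa}\colon\cA_a\to\cA_o$ is onto, so that $\jmath_{oa}(\cA_a)=\cA_o$ and hence $\cA^F_o=\cA_o$ (and likewise $\cA^F_{\tilde o}=\cA_{\tilde o}$). Consequently $\zeta^F_{\tilde oo}$ is a $^*$-isomorphism of $\cA_o$ onto $\cA_{\tilde o}$, and all the inclusion maps appearing below may be inverted freely, writing $\jmath_{ao}=\jmath_{oa}^{-1}$ as in the conventions of \S\ref{Ba}.

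The second step proves (\ref{Da:7}). Starting from the defining relation (\ref{Da:1}), namely $\zeta^F_{\tilde oo}\circ\jmath_{oa}=\jmath_{\tilde oa}$, I would compose on the right with $\jmath_{ao}=\jmath_{oa}^{-1}$. Since $\jmath_{oa}\circ\jmath_{ao}=\mathrm{id}_{\cA_o}$, this yields $\zeta^F_{\tilde oo}=\jmath_{\tilde oa}\circ\jmath_{ao}$ at once, and the computation is valid for every $a\in F$ because (\ref{Da:1}) is imposed at every vertex of $F$.

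The third step derives (\ref{Da:8}) from (\ref{Da:7}). The point is that the right-hand side of (\ref{Da:7}) does not depend on the chosen $a\in F$, all such expressions being equal to the single isomorphism $\zeta^F_{\tilde oo}$. Hence for $a_1,a_2\in F$ one has $\jmath_{\tilde oa_1}\circ\jmath_{a_1o}=\jmath_{\tilde oa_2}\circ\jmath_{a_2o}$. Pre-composing with $\jmath_{a_2\tilde o}=\jmath_{\tilde oa_2}^{-1}$ and post-composing with $\jmath_{oa_1}=\jmath_{a_1o}^{-1}$ then gives $\jmath_{a_2\tilde o}\circ\jmath_{\tilde oa_1}=\jmath_{a_2o}\circ\jmath_{oa_1}$, which is exactly (\ref{Da:8}).

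I do not expect any serious obstacle here: once the collapse $\cA^F_o=\cA_o$ is noticed, both identities are forced by the invertibility of the inclusion maps together with the single defining relation (\ref{Da:1}); the homotopy invariance of the holonomy, which one might a priori expect to be needed, is in fact already encoded in the existence of the cocycle $\zeta^F_{\tilde oo}$ on the simplex $(F;\tilde o,o)$. The only point to phrase carefully is that (\ref{Da:1}) holds for all $a\in F$ simultaneously, since it is precisely this that makes the right-hand side of (\ref{Da:7}) independent of $a$ and thereby produces (\ref{Da:8}).
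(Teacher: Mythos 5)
Your proposal is correct and follows essentially the same route as the paper's proof: observe that $\cA^F_o=\cA_o$ because the inclusion maps of a $\rC^*$-net bundle are $^*$-isomorphisms, obtain (\ref{Da:7}) by composing the defining relation (\ref{Da:1}) with $\jmath_{ao}=\jmath_{oa}^{-1}$, and deduce (\ref{Da:8}) from the fact that the right-hand side of (\ref{Da:7}) is independent of $a\in F$, composing with the appropriate inverses. The paper's proof is just a terser rendering of exactly these three steps.
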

\begin{proof}
First of all note that $\cA^F_o=\cA_o$, for any $F\leq o$, since we are considering a $\rC^*$-net bundle. Then the first relation derives directly from (\ref{Da:1}). Using the first relation  
we have 
$\jmath_{\tilde oa_1}\circ {\jmath}_{a_1o} = \jmath_{\tilde oa_2}\circ {\jmath}_{a_2o}$, 
for any $a_1,a_2\in F$, 
and the second relation follows.  
\end{proof}
The second equation looks like a triviality result for the $\rC^*$-net bundle. 
In particular, when the \v Cech cocycle is globally defined, this results 
asserts that the 1-cocycle defined by $\jmath$,  
giving the action of the homotopy group of the holonomy dynamical system (see (\ref{Bc:3})), 
does not depends on the support of the 1-simplex: $\jmath_b = \jmath_{\tilde b}$ 
for any pair of 1-simplices such that $\partial_ib=\partial_i\tilde b$ for $i=0,1$. 
We shall see soon that any $\rC^*$-net bundle over $S^1$ having a globally defined \v Cech 
cocycle is indeed trivial. \\
\indent We now start analyzing the relation between injectivity and the \v Cech cocycle. 
\begin{lemma}
\label{Da:9}
Let $(\cA,\jmath)_K$ be a net of $\rC^*$-algebras and
$P \subseteq K$ pathwise connected and such that $(\cA,\jmath)_P$ has a faithful
Hilbert space representation. Then $\Si^\circ_*(P)$ is contained in the domain of the 
\v Cech cocycle of the net $(\cA,\jmath)_K$.  
\end{lemma}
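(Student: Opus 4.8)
The plan is to exhibit a generalized \v Cech cocycle of $(\cA,\jmath)_K$ whose index set is all of $\Si^\circ_1(P)$, and then invoke the description of the domain. Indeed, $\Si^\circ_*(P)$ is a symmetric subsimplicial set of $\Si^\circ_*(K)$ (it is closed under faces, degeneracies and permutations of vertices, since $P\subseteq K$), so it belongs to $\mathrm{Sub}(\Si^\circ_*(K))$. By the uniqueness argument in Lemma \ref{Da:4}, the domain of the \v Cech cocycle is the union $\cup_\alpha D^\alpha_*$ of \emph{all} subsimplicial sets carrying a cocycle; hence any element of $\mathrm{Sub}(\Si^\circ_*(K))$ supporting a cocycle is automatically contained in the domain. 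Thus it suffices to construct a family $\{\zeta^F_{\tilde oo}\mid (F;\tilde o,o)\in\Si^\circ_1(P)\}$ of $^*$-isomorphisms satisfying the defining relation (\ref{Da:1}); the identities (\ref{Da:2}) and (\ref{Da:2a}) then follow automatically, as remarked after Definition \ref{Da:0}.

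Let $(\pi,1)$ be the faithful Hilbert space representation of $(\cA,\jmath)_P$ on the fixed Hilbert space $\cH$. Unwinding (\ref{Cb:1}) with trivial inclusion operators, every direct inclusion $a\leq o$ in $P$ gives $\pi_o\circ\jmath_{oa}=\pi_a$ on $\cA_a$. Now fix $(F;\tilde o,o)\in\Si^\circ_1(P)$, so that $a\leq o$ and $a\leq\tilde o$ for all $a\in F$. Since $\pi_o$ is a $^*$-homomorphism it carries $\cA^F_o=\rC^*\{\jmath_{oa}(\cA_a)\mid a\in F\}$ onto the $\rC^*$-subalgebra of $\mathfrak{B}(\cH)$ generated by the $\pi_o(\jmath_{oa}(\cA_a))=\pi_a(\cA_a)$, and the same computation at $\tilde o$ yields
\[
\pi_o(\cA^F_o)=\rC^*\{\pi_a(\cA_a)\mid a\in F\}=\pi_{\tilde o}(\cA^F_{\tilde o}) \ .
\]
This coincidence of images inside a \emph{single} $\mathfrak{B}(\cH)$ is exactly where the assumption of a Hilbert space representation, rather than a general one with nontrivial $U$, is used.

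Faithfulness of $\pi$ now makes $\pi_o:\cA^F_o\to\pi_o(\cA^F_o)$ and $\pi_{\tilde o}:\cA^F_{\tilde o}\to\pi_{\tilde o}(\cA^F_{\tilde o})$ $^*$-isomorphisms onto the common image, so I define
\[
\zeta^F_{\tilde oo}:=\pi_{\tilde o}^{-1}\circ\pi_o:\cA^F_o\to\cA^F_{\tilde o} \ ,
\]
which is a $^*$-isomorphism. For $a\in F$ and $A\in\cA_a$,
\[
\zeta^F_{\tilde oo}\bigl(\jmath_{oa}(A)\bigr)=\pi_{\tilde o}^{-1}\bigl(\pi_o(\jmath_{oa}(A))\bigr)=\pi_{\tilde o}^{-1}\bigl(\pi_a(A)\bigr)=\pi_{\tilde o}^{-1}\bigl(\pi_{\tilde o}(\jmath_{\tilde oa}(A))\bigr)=\jmath_{\tilde oa}(A) \ ,
\]
which is precisely (\ref{Da:1}). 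Hence $\{\zeta^F_{\tilde oo}\}$ is a \v Cech cocycle defined over $\Si^\circ_*(P)$, and by the first paragraph $\Si^\circ_*(P)$ is contained in the domain of the (unique) \v Cech cocycle of $(\cA,\jmath)_K$.

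I expect the only delicate point to be the identity $\pi_o(\cA^F_o)=\pi_{\tilde o}(\cA^F_{\tilde o})$: it rests on the fact that a $^*$-homomorphism commutes with the operation $\rC^*\{\cdots\}$ of generating a $\rC^*$-algebra, and, crucially, on the triviality of the inclusion operators, which forces the two images to be the \emph{same} subalgebra of $\mathfrak{B}(\cH)$ and not merely isomorphic ones. Everything else is a direct verification. Note that the local nature of the construction (one $1$-simplex at a time) means the minorants $F$ shared by $o$ and $\tilde o$ already supply the required comparison, so the role of pathwise connectedness of $P$ is confined to ensuring the restricted net admits a faithful representation on one common Hilbert space, as in the hypothesis.
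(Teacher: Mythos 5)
Your proof is correct and follows essentially the same route as the paper: the paper likewise exploits the coherent family of unital faithful morphisms $\psi_o$ into a single $\rC^*$-algebra (there, the universal algebra of $(\cA,\jmath)_P$) furnished by the faithful Hilbert space representation, and produces the same cocycle, defined on generators by $\jmath_{o_1a_i}(A_i)\mapsto\jmath_{o_2a_i}(A_i)$ and shown isometric through the $\psi_o$'s. Your packaging $\zeta^F_{\tilde oo}=\pi_{\tilde o}^{-1}\circ\pi_o$, after observing $\pi_o(\cA^F_o)=\pi_{\tilde o}(\cA^F_{\tilde o})$ inside $\mathfrak{B}(\cH)$, yields the identical isomorphism, merely handling well-definedness automatically rather than via the paper's explicit norm computation.
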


\begin{proof}
By hypothesis there exists a $\rC^*$-algebra $\cA(P)$ (the 
universal one) and unital faithful ${^*}$-morphisms 
$\psi_o : {\cA_o} \to \cA(P)$, with $o\in P$, 
such that  $\psi_{o'} \circ \jmath_{o'o}= \psi_o$ for any $o\leq o'$.
Now, given $(F;o_2,o_1)\in \Si^\circ_1(P)$, for any $a_1,\ldots, a_n\in F$, define 
\begin{equation}
\label{Da:10}
 \zeta^F_{o_2o_1}\big(\jmath_{o_1a_1}(A_1) \cdots  \jmath_{o_1a_n}(A_{n})\big) 
 :=
 \jmath_{o_2a_1}(A_1) \cdots \jmath_{o_2a_n}(A_n) 
\end{equation}
where $A_i\in\cA_{a_i}$ for $i=1,\ldots,n$, and extend $\zeta^F_{o_2o_1}$ by 
linearity to all the ${^*}$-algebra generated by $\jmath_{{o_1a}}(A_{a})$ as  
$a$ varies in $F$. To prove that $\zeta^F_{o_2o_1}$ is isometric we make use 
of the $\psi_o$'s as follows:
\begin{align*}
 \|\jmath_{o_2a_1}(A_1)&  \jmath_{o_2a_2}(A_2) \cdots   \jmath_{o_2a_n}(A_n)\|  = \\
& =  \norm{\psi_{o_2} \big(\jmath_{o_2a_1}(A_1)\jmath_{o_2a_2}(A_2)\cdots  \jmath_{o_2a_n}(A_n)\big)}  \\
& = \norm{(\psi_{o_2}\circ\jmath_{o_2a_1})(A_1)\,  (\psi_{o_2}\circ\jmath_{o_2a_2})(A_2)\cdots  (\psi_{o_2}\circ\jmath_{o_2a_n})(A_n)} \\
& = \norm{\psi_{a_1}(A_1) \psi_{a_2}(A_2)\cdots  \psi_{a_n}(A_n)} \\
& = \norm{(\psi_{o_1}\circ\jmath_{o_1a_1})(A_1)\, (\psi_{o_1}\circ\jmath_{o_1a_2})(A_2)\cdots  (\psi_{o_1}\circ\jmath_{o_1a_n})(A_n)} \\
& = \norm{\psi_{o_1} \big(\jmath_{o_1a_1}(A_1)\jmath_{o_1a_2}(A_2)\cdots  \jmath_{o_1a_n}(A_n)\big)}  \\
& = \norm{\jmath_{o_1a_1}(A_1)\jmath_{o_1a_2}(A_2)\cdots  \jmath_{o_1a_n}(A_n)}  \ .
\end{align*}
The same reasoning applies to finite linear combinations, hence $\zeta^F_{o_2o_1}$ extends by 
continuity to a ${^*}$-isomorphism from $\cA^F_{o_1}$ to $\cA^F_{o_2}$ that, by definition,
fulfills (\ref{Da:1}). 
\end{proof}
The following consequence of Lemma \ref{Da:9} will be useful to determine whether 
a given net is injective (see Example \ref{Da:12}).
\begin{corollary}
\label{Da:11}
 Let $(\cA,\jmath)_K$ be an injective net. Then $\Si^\circ_*(P)$ 
 is  contained in the domain of the \v Cech cocycle
 for any connected and simply connected  subset $P$ of $K$.
\end{corollary}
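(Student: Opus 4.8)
The plan is to reduce the claim to Lemma \ref{Da:9}, whose hypothesis requires $(\cA,\jmath)_P$ to carry a faithful \emph{Hilbert space} representation. First I would invoke injectivity: by Theorem \ref{Cb:10}$(i)$ the net $(\cA,\jmath)_K$ admits a faithful representation $(\pi,U)$. Restricting the families $\pi$ and $U$ to the indices lying in $P$ produces a representation of the restricted net $(\cA,\jmath)_P$, since the defining relations (\ref{Cb:1}) and (\ref{Cb:2}) only involve inclusions internal to $P$; this restriction is still faithful because each $\pi_o$, $o\in P$, is faithful by assumption.

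The essential step is then to trade the nontrivial inclusion operators $U$ for trivial ones. Here I would use that $P$ is connected and simply connected, so $\pi_1(P)$ is trivial; by the result $(ii)$ recalled from \cite{BR}, any representation of a net over a simply connected poset is equivalent to a Hilbert space representation $(\pi',\mathbbm{1})$. Since the equivalence is implemented by a unitary intertwiner $T=\{T_o\}$ with each $T_o\in(\pi_o,\pi'_o)$ unitary, one has $\pi'_o=\ad T_o\circ\pi_o$, so $\pi'_o$ is faithful whenever $\pi_o$ is. Thus $(\pi',\mathbbm{1})$ is the desired faithful Hilbert space representation of $(\cA,\jmath)_P$.

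Finally I would apply Lemma \ref{Da:9} to the pathwise connected subset $P$, concluding that $\Si^\circ_*(P)$ is contained in the domain of the \v Cech cocycle of $(\cA,\jmath)_K$, as desired. I expect the only delicate point to be the passage from the restricted representation to a Hilbert space representation, which is exactly where simple connectedness enters, through the triviality of $\pi_1(P)$ together with the cited equivalence result; checking that restriction and unitary equivalence both preserve faithfulness is routine.
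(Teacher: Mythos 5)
Your proposal is correct and follows exactly the route the paper intends: the corollary is stated as a direct consequence of Lemma \ref{Da:9}, and the missing steps are precisely the ones you supply --- Theorem \ref{Cb:10}$(i)$ to get a faithful representation, restriction to $P$, and the cited result from \cite{BR} that over a simply connected poset every representation is equivalent to a Hilbert space representation (with unitary equivalence preserving faithfulness). Your identification of the passage from the restricted representation to a Hilbert space representation as the point where simple connectedness enters is exactly right.
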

Two observations are in order. \emph{First}, the above result applies to 
$\rC^*$-net bundles since any $\rC^*$-net bundle is injective by definition. 
\emph{Secondly}, even if  any poset $K$ is covered by the set of its 
connected and simply connected subsets,  
the \v Cech cocycle of an injective net may not be  globally defined since 
$\Si^\circ_*(S)\cup \Si^\circ_*(P)$ may be  smaller than $\Si^\circ_*(S\cup P)$,  
for any pair $S,P$ of nonempty subsets of $K$. 
In fact, if  there are $o\in S\setminus P$, $a\in P\setminus S$ and a nonempty 
subset $F$ of $S\cup P$ with $F\leq \{o,a\}$, then the 1-simplex $(F;a,o)\in \Si^\circ_1(S\cup P)$ but 
$(F;a,o)\not\in \Si^\circ_1(S)\cup \Si^\circ_1(P)$. 
%


\subsection{Examples}
\label{Daa}

Using the results of the previous section we give examples of degenerate nets. 
Afterwards, we make some conjecturs concerning the domain of the \v Cech cocycle 
and the injectivity of the nets. At the end of the section 
we give interesting examples of nets coming from complexes of groups.\smallskip

To begin with we give an example of a degenerate net.  
\begin{example}
\label{Da:12}
We consider the poset $B$ with elements $\{m,o,a,x,y\}$ and order relation 
$m\leq o,a\leq x,y$, so there is a minimum  $m$ and two maximal elements $x,y$. This poset 
is simply connected since it is downward directed (see preliminaries). \\
\indent We now define a net over $B$. Let $\cC$ be a unital $\rC^*$-algebra. 
The fibres are defined as follows: $\cC_m:= \cC$ and $\cC_a= \cC_o= \cC_x= \cC_y =\bM_2(\cC)$.  
Passing to the inclusion maps, let 
$\rho:\cC\to \bM_2(\cC)$ be the ampliation of $\cC$, that is the faithful ${^*}$-morphism
defined by
\[
\rho(A):=
\left(
\begin{array}{cc}
A & 0 \\
0 & A
\end{array}
\right) \ , \qquad  A\in \cC \ . 
\]
Let $\gamma:\bM_2(\cC)\to \bM_2(\cC)$ be the automorphism 
defined by $\gamma:=\mathrm{Ad}_V$ where  
$V$ is the unitary of $\bM_2(\cC)$ defined by 
\[
 V :=
\left(
\begin{array}{cc}
0 & 1 \\
1 & 0
\end{array}
\right) \ . 
\]
Define $\jmath_{xm}=\jmath_{ym}=\jmath_{am}=\jmath_{om}= \rho$,  
$\jmath_{xa}=\jmath_{ya}=\mathrm{id}_{\bM_2(\cC)}$,  
$\jmath_{xo}=\mathrm{id}_{\bM_2(\cC)}$ and $\jmath_{yo}=\gamma$.
Since  $\gamma\circ \rho =\rho$, it is easily seen that this system is 
a net of $\rC^*$-algebras denoted by $(\cC,\jmath)_{B}$. \\
\indent We now prove that 
\emph{the net $(\cC,\jmath)_{B}$ is not injective}.
Assuming that $(\cC,\jmath)_{B}$ is injective, and $B$
being simply connected, the \v Cech cocycle should be globally defined by 
Cor.\ref{Da:11}. 
Consider the projection $E \in \bM_2(\cC)$ defined by 
\[
 E :=
\left(
\begin{array}{cc}
1 & 0 \\
0 & 0
\end{array}
\right) \ . 
\] 
Since $\gamma(E)=1-E$ we have $\jmath_{ya}(E)\,\jmath_{yo}(E) = E\,\gamma(E) = 0$, while   
$\jmath_{xa}(E)\, \jmath_{xo}(E) = E$. Therefore, fixing $F := \{ a,o \}$,  
an isomorphism $\zeta^F_{xy} : \cA^F_y \to \cA^F_x$ fulfilling
$\zeta^F_{xy} \circ \jmath_{yo} = \jmath_{xo}$
cannot exist, since  we should have 
\[
0 = \zeta^F_{xy} \ (\jmath_{ya}(E)\,\jmath_{yo}(E)) = 
\zeta^F_{xy}(\jmath_{ya}(E))\,\zeta^F_{xy}(\jmath_{yo}(E)) = 
\jmath_{xa}(E)\,\jmath_{xo}(E)=E \ ,
\]
a contradiction. Finally note 
that when $\cC$ is a \emph{simple} $\rC^*$-algebra, the 
net $(\cC,\jmath)_B$ is degenerate. This is because the fibres of $(\cC,\jmath)_B$ are 
simple, so any representation
is trivial.
As a consequence, the enveloping net bundle of $(\cC,\jmath)_B$ is the null net bundle.
\end{example}
The next is an example of an injective net having no Hilbert space 
representations.
\begin{example}
\label{Da:12a}
The poset we consider is obtained from the one
of the previous example by removing the minimum, so it is given by 
$C_2 := \{a,o,x,y\}$ with order relation $a,o\leq x,y$. 
We call $C_2$ a {\em 2-cylinder}, a terminology that will be clarified in \cite{RVs1}.
One can easily see that this poset is not simply connected and that the homotopy group 
is $\bZ$.
We now define a net 
over $C_2$. Using the same notation as in the  previous example, let $\cC$ be 
a simple $\rC^*$-algebra, and  set 
$\cC_{o}=\cC_{a}=\cC_{y}=\cC_{x}= \mathbb{M}_2(\cC)$, and  
$\jmath_{xa}=\jmath_{ya}=\mathrm{id}_{\mathbb{M}_2(\cC)}$,  
$\jmath_{xo}=\mathrm{id}_{\mathbb{M}_2(\cC)}$, 
$\jmath_{yo}=\gamma$.  Since there are no compositions of inclusions in this poset, 
$(\cC,\jmath)_{C_2}$ is a $\rC^*$-net bundle (all the inclusions are isomorphisms). 
$(\cC,\jmath)_{C_2}$ is injective by definition, but the \v Cech cocycle of this net 
is not globally defined for  the same reason as in the previous example.
For, take the projection $E$ and the subset $F=\{o,a\}$ of the previous example.
Then the \v Cech cocycle is not defined on 1-simplex  $(F;x,y)$, since 
as above we should have $0 = \zeta^F_{xy} \ (\jmath_{ya}(E)\,\jmath_{yo}(E)) = E$.
Hence, by Lemma \ref{Da:9},  the $\rC^*$-net bundle $(\cA,\jmath)_{C_2}$ has  no Hilbert 
space representations because  the fibres are simple $\rC^*$-algebras.
This also implies that the universal $\rC^*$-algebra of $(\cA,\jmath)_{C_2}$ 
is trivial, i.e. $\cA^u = \{ 0 \}$.
\end{example}
We now analyse  the relation between injectivity and 
the domain of the \v Cech cocycle in the opposite sense, and ask whether 
a net is injective on subsets of the poset associated with the domain of the \v Cech cocycle.  
We have no general result, however it is our opinion that the following 
assertions should hold. \emph{First}, given a nondegenerate net. If the \v Cech cocycle 
is defined on $\Si^\circ_*(P)$ for any connected and simply connected subset $P$ of the poset, 
then the net is should be injcetive; If the \v Cech cocycle is globally defined, then 
the net should have faithful Hilbert space representations. \emph{Secondly}, a 
$\rC^*$-net bundle is trivial if its \v Cech cocycle is globally defined. \\
\indent The next is an example supporting the first assertion. 
\begin{example}
\label{Da:13}
We consider the 2-cylinder $C_2 := \{ o,a,x,y \}$ of the previous example. 
Let $(\cA,\jmath)_{C_2}$ be a net of $\rC^*$-algebras. 
Assume that this net has a globally defined \v Cech cocycle. 
Then in particular we have a $^*$-isomorphism $\zeta^{\{o,a\}}_{yx} : \cA^{\{o,a\}}_x \to \cA^{\{o,a\}}_y$
such that 
\begin{equation}
\label{Da:1a}
\zeta^{\{o,a\}}_{yx} \circ \jmath_{xe} = \jmath_{ye}
\ , \
\zeta^{\{o,a\}}_{xy} \circ \jmath_{ye} = \jmath_{xe}
\ \ , \ \
e=o,a
\ .
\end{equation}
Denoting a copy of $\cA^{\{o,a\}}_x$ by $\cC$ yields the diagram
\[
\xymatrix{
   \cA_x
&   
&  \cA_y
\\ 
&  \cC
   \ar[ru]_{\zeta^{\{o,a\}}_{yx}}
   \ar[lu]^{\subseteq}
&
\\
\cA_o
\ar[ru]^{\jmath_{xo}}
&
&
\cA_a
\ar[lu]_{\jmath_{xa}}
}
\]
By (\ref{Da:1a}), the desired Hilbert space representation is obtained 
as a representation of the amalgamated free product $\cA_x *_\cC \cA_y$.
\end{example}
A proof of the first conjecture seems to be far from to be reached. 
The above example suggests that it may be related to the "realizability" of 
the generalized amalgamated free product of $\rC^*$-algebras, \cite{Bil}.\\
\indent The next result supports the second  assertion. 
\begin{lemma}
\label{Da:14}
Any $\rC^*$-net bundle over $S^1$ having a globally defined \v Cech cocycle is trivial. 
\end{lemma}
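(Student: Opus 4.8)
The plan is to reduce triviality of the $\rC^*$-net bundle $(\cA,\jmath)_K$, where $K$ is the standard base of open intervals of $S^1$, to the vanishing of its holonomy action, and then to exploit the globally defined \v Cech cocycle to show that this action kills the generator of $\pi_1^o(K)\cong\bZ$. By Proposition~\ref{Bc:10} the net bundle is isomorphic to the one associated with its holonomy dynamical system $(\cA_*,\pi_1^o(K),\jmath_*)$, and, exactly as in the proof of Corollary~\ref{Bc:11}, if the action $\jmath_*$ is trivial then the associated net bundle is constant, hence $(\cA,\jmath)_K$ is trivial. So it suffices to prove that $\jmath_p=\mathrm{id}$ for a loop $p$ generating $\pi_1^o(K)$.

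The key input is route-independence of the ``up--down'' holonomy, which I would read off from Lemma~\ref{Da:6}. Since the cocycle is globally defined, the domain $D_*$ equals $\Si^\circ_*(K)$, so for every pair $x,y\in K$ admitting a common upper bound and every two such upper bounds $s,s'$ (taking $F=\{x,y\}$ and applying (\ref{Da:8}) with apexes $s,s'$) one gets $\jmath_{ys}\circ\jmath_{sx}=\jmath_{ys'}\circ\jmath_{s'x}$. I denote this common value by $\eta_{yx}\colon\cA_x\to\cA_y$. Two facts are then immediate. First, the holonomy of any $1$-simplex $b$ is $\jmath_b=\jmath_{\partial_0b\,|b|}\circ\jmath_{|b|\,\partial_1b}=\eta_{\partial_0b\,\partial_1b}$, so the holonomy of any path is the product of the $\eta$'s over its consecutive vertices. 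Second, whenever $x,y,z$ share a single common upper bound $s$ the telescoping identities $\eta_{zy}\circ\eta_{yx}=\eta_{zx}$ and $\eta_{xx}=\mathrm{id}$ hold, since $\jmath_{sy}\circ\jmath_{ys}=\mathrm{id}$.

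Finally I would compute the holonomy of a generator. Choose three arcs $a_1,a_2,a_3\in K$ covering $S^1$ whose pairwise overlaps contain small intervals $d_1\le a_1,a_2$, $d_2\le a_2,a_3$, $d_3\le a_3,a_1$, so that the loop $p\colon d_1\to a_2\to d_2\to a_3\to d_3\to a_1\to d_1$ generates $\pi_1^{d_1}(K)\cong\bZ$. By the first fact above, $\jmath_p=\eta_{d_1d_3}\circ\eta_{d_3d_2}\circ\eta_{d_2d_1}$. Now the three small disjoint arcs $d_1,d_2,d_3$ are contained in a single connected open interval of $S^1$ with proper closure (cover all of $S^1$ except a small gap), i.e.\ in a single $s\in K$ with $d_1,d_2,d_3\le s$; by route-independence each factor may be taken through this common $s$, and the telescoping identities give $\jmath_p=\eta_{d_1d_3}\circ\eta_{d_3d_1}=\eta_{d_1d_1}=\mathrm{id}$. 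Hence $\jmath_*$ is trivial and the net bundle is trivial. The hard part is the bookkeeping in the middle step: verifying, via the index conventions of (\ref{Da:8}), that the globally defined cocycle really removes the dependence of the up--down holonomy on its apex, even for apexes lying on ``opposite sides'' of the circle. Once route-independence is in hand, the geometric observation that three small arcs of $S^1$ always fit inside a single proper interval does the rest.
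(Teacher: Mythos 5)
Your proof is correct and follows essentially the same route as the paper's: reduce triviality of the net bundle to triviality of its holonomy on a generator of $\pi_1^o(K)\cong\bZ$ (via Prop.~\ref{Bc:10} and the argument of Cor.~\ref{Bc:11}), then kill that holonomy using the apex-independence relation (\ref{Da:8}) of Lemma \ref{Da:6}, which is exactly what the globally defined \v Cech cocycle supplies. The only difference is bookkeeping: the paper covers $S^1$ by two arcs $x,y$ with $x\cap y$ disconnected, so the generating loop has length two and a single application of (\ref{Da:8}) finishes the computation, whereas your three-arc cover requires the auxiliary interval $s$ containing $d_1,d_2,d_3$ together with the telescoping identities --- both versions are valid.
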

\begin{proof}
Let $(\cA,\jmath)_\cI$ be a $\rC^*$-net bundle over $S^1$ (i.e., $\cI$ is the set of open 
connected intervals of $S^1$ whose closure is properly contained in $S^1$).  
The homotopy group of $\cI$ is the same as that of $S^1$, i.e. $\bZ$. 
We now consider a path in $\cI$, defined as follows. Let $x,y\in \cI$ be such that 
$x\cup y= S^1$ and $x\cap y$ has two connected components $a$ and $o$ respectively. 
Clearly $a,o\in\cI$.  Consider the 1-simplices $b_1$ and $b_2$ defined by 
\[
|b_1|:=x \ ,  \ \partial_1b_1:=a \ , \   \partial_0b_1:= o \ \ , \ \   
|b_2|:=y \ ,  \partial_1b_2:=o \ , \   \partial_0b_2:= a \ .   
\]
The path $b_2*b_1$ is a loop over $a$ which is not homotopically trivial and whose  
image under the isomorphism $\pi_1(\cI) \simeq \pi_1(S^1)$ is the generator
of $\pi_1(S^1)$. 
As a consequence, $(\cA,\jmath)_\cI$ is trivial if the
isomorphism $\jmath_{b_2*b_1}$ generating the holonomy of
$(\cA,\jmath)_\cI$ is the identity.
Now, since the \v Cech cocycle is globally defined,  it is defined, in particular, on the 1-simplex  
$(\{x,y\};a,o)$. By using equation (\ref{Da:8}) (within Lemma \ref{Da:6}) 
we have that 
${\jmath}_{ox}\circ \jmath_{xa}  = {\jmath}_{oy}\circ \jmath_{ya}$,  
so 
$\jmath_{b_2*b_1}  = 
{\jmath}_{ay}\circ \jmath_{yo}\circ {\jmath}_{ox}\circ \jmath_{xa}  = 
\mathrm{id}_{\cA_a}$,  
where the homotopy equivalence of the paths  $b_2*b_1$ and 
$\overline{(ya)} *(yo)* \overline{(xo)}*(xa)$ has been used. 
\end{proof}
Note that if a net has a globally defined \v Cech cocycle, then the  
\v Cech cocycle of the enveloping net bundle may be not globally defined. 
In algebraic quantum field theory there are examples of nets  
having both faithful Hilbert space representations and faithful representations 
carrying a nontrivial representation of the homotopy group of the poset which are not 
quasi trivial (see \cite{BR, BFM}). 
So the enveloping net bundle is not trivial, Prop.\ref{Cb:10c}.\smallskip

In conclusion we present interesting examples of injective and degenerate nets coming from 
geometric group theory. 
\begin{example} 
\label{Da:15}
The \emph{triangle} poset is a set $T$ of seven elements $\{m,A,B,C,X,Y,Z\}$ 
ordered as follows: $m\leq A,B,C$ and $ A,B\leq X$, $B,C\leq Y$ and $A,C\leq Z$. 
$T$ is downward directed, so it is simply connected. Any net of groups 
$(G,y)_T$ over $T$ is a triangle of groups and,  
by the functor $\rC^*$ (\S \ref{Bbc}), a net of group $\rC^*$-algebras 
$(\rC^*(G),\rC^*(y))_K$ is associated with $(G,y)_T$. By the properties of functor $\rC^*$,
the net $(G,y)_T$ embeds faithfully in the colimit group if, and only if, 
the net $(\rC^*(G),\rC^*(y))_K$ embeds faithfully in the universal $\rC^*$-algebra $\rC^*(G)^\ru$. 
Gersten and Stallings (\cite{Sta}) gave sufficient conditions for the existence of this faithful embedding in  terms 
of angles associated to a triangle of groups. 
However, 
they also gave an example of a triangle of groups which does not satisfy these conditions and 
has a trivial colimit: take 
$H_m:=\mathbbm{1}$ the trivial group,
$H_A:=\{ a \}$, $H_B:=\{ b \}$ and $H_C:=\{ c \}$ the free groups of one generator,
$H_X:= \{ a,b \, | \, b^2=aba^{-1} \}$, $H_Y:=\{ b,c \,|\, c^2=bcb^{-1} \}$ and 
$H_Z:=\{a,c\,|\, a^2=cac^{-1}\}$. 
Taking the inclusion of groups as inclusion maps 
we get a net of groups $(H,i)_K$. The colimit of this net is the Mennicke
group $M(2,2,2)=\{ a,b,c\,|\, b^2=aba^{-1} \, , \,  c^2=bcb^{-1}\, ,\,  a^2=cac^{-1}\}$ 
which is known to be trivial.  So, the corresponding net of group $\rC^*$-algebras 
is degenerate. 
\end{example}
Two observations about this example are in order. \emph{First}, note that  
$(\rC^*(H),\rC^*(i))_{T}$ is  a degenerate net of $\rC^*$-algebras having a globally defined \v Cech cocycle (this can be easily seen). 
\emph{Secondly}, it is interesting to analyze the r\^ole of the minimum $m$. Consider the poset 
$T\setminus\{m\}$ obtained by removing $m$ from $T$. Any net of $\rC^*$-algebras over $T\setminus\{m\}$ is injective since any  such a net $(\cA,\jmath)_{T\setminus\{m\}}$ admits faithful representations. To prove this we use an idea due to Blackadar (\cite{Bla}). Let $\kappa$ be 
a cardinal greater than the cardinality of the algebra $\cA_a$ for any element $a$ of  $T\setminus\{m\}$. Then we set $\pi_a$ as the tensor product of the universal representation  of 
$\cA_a$ times  $1_{\kappa}$. The trick of the cardinality implies that 
$\pi_a\circ \jmath_{ao}$ is unitarily 
equivalent to $\pi_o$ for any inclusion $a\leq o$. So, choose such a unitary opertor $V_{ao}$ for any inclusion $o\leq a$. 
Then, since in $T\setminus\{m\}$ conpositions of inclusions are not possible the pair 
$(\pi,V)$ is a faithful representation of $(\cA,\jmath)_{T\setminus\{m\}}$. Applying this result 
to the example of Gersten and Stallings, we have that  the net  
$(\rC^*(H),\rC^*(i))_{T\setminus\{m\}}$ is injective and that 
the net $(H,i)_{T\setminus\{m\}}$ has faithful representations. However, 
$(\rC^*(H),\rC^*(i))_{T\setminus\{m\}}$ has no Hilbert space representations, because 
any Hilbert space representation of this net can be easily extended to a Hilbert space representation of $(\rC^*(H),\rC^*(y))_{T}$ and this, as observed before, is not possible.


\section{Comments and outlook}
\label{Z}

We list some topics and questions arising from the present
paper. \smallskip

\noindent 1.  The examples of nets of $\rC^*$-algebras defined by groups of loops, 
              \S  \ref{Bb}, might have  interesting applications in algebraic 
              quantum field theory. They are model independent and are constructed using only the 
              principles of the theory. These examples  are 
              still incomplete (see Remark \ref{Bbc:2}); a complete description 
              will be given in a forthcoming paper \cite{CRV}. \\[5pt]
\noindent 2.  
              %
              The present paper poses some new questions, in particular whether
              an injective net with a globally defined \v Cech cocycle 
              has faithful Hilbert space representations.
              The existence of  Hilbert space representations is equivalent 
              to proving that a suitable ideal of the fibre of the enveloping net bundle is proper 
              (Prop. \ref{Cb:10ab}). So, one should find a relation  
              between the \v Cech cocycle of the net to this ideal. Examples 
              where this ideal is not proper have been given in \S \ref{Daa}.\\[5pt]  
\noindent 3. As an application of the results of the present work 
             we shall show, in a forthcoming paper \cite{RVs1}, the injectivity of nets 
             over $S^1$, an interesting class due to the
             applications in conformal field theory.\\[5pt]
 \noindent 4. Finally, we stress that nets of $\rC^*$-algebras defined over a (base for the topology
              of a) space $X$ carry topological information even when they are not net bundles. 
              In fact, it can be proved that every net is a precosheaf of local sections 
              of a canonical $\rC^*$-bundle  (in the topological sense). 
              This result, and its consequences, is the object of a forthcoming paper (\cite{RV2}).


\appendix

\section{Elementary properties of representations}
\label{AppRep}

In this appendix we give some basic properties of representations of nets of 
$\rC^*$-algebras. We will give particular emphasis to the fact that these can be
conveniently described in terms of the covariant representations of the
dynamical system associated with the enveloping net bundle of the given net.

More precisely, let $(\cA,\jmath)_K$ be a net of $\rC^*$-algebras and 
$( \overline \cA , \overline \jmath )_K$ denote the enveloping net bundle
(see \S \ref{Bd}); then by Prop.\ref{Bc:10} we have a dynamical
system
$( \overline \cA_* , \pi_1^o(K) , \overline \jmath_* )$,
where $\overline{\cA}_* := \overline{\cA}_o$ for some fixed $o \in K$.
A  representation  $(\pi,U)_K$ of   $(\cA,\jmath)_K$  extends, by 
Lemma \ref{Cb:7},  to a representation
$( {\pi}^{\uparrow} , U )$
of
$( \overline \cA , \overline \jmath )_K$
which, in turns, yields the $\pi_1^o(K)$-covariant representation
\begin{equation}
\label{eq.Arep}
{\pi}^{\uparrow}_* : \overline{\cA}_*  \to \cB(\cH_o)
\ \ , \ \
U_* : \pi^o_1(K) \to \cU(\cH)
\end{equation}
(see Lemma \ref{Cb:8}). We shall keep the above notation 
throughout the appendix.

\subsection{Decomposition of representations}

Let $(\cH,U)_K$ denote a Hilbert net bundle. 
Then the $\rC^*$-net bundle $(\cB \cH , \ad U)_K$ is defined,
and we say that a family 
$T = \{ T_a \in \cB(\cH_a) \}$ 
is a {\em section} of $(\cB \cH,\ad U)_K$ whenever the following relations
are fulfilled,
\[
\ad U_{\tilde aa}(T_a) = T_{\tilde a} 
\ \ , \ \ 
\forall a \leq \tilde a \ .
\]
We denote the set of sections of $(\cB \cH , \ad U)_K$ by $\Si(\cB \cH , \ad U)_K$; 
this is a unital $\rC^*$-algebra under the natural $*$-algebraic structure
\[
T+T' := \{ T_a + T'_a \}
\ \ , \ \
T^* := \{ T_a^* \}
\ \ , \ \
TT' := \{ T_a T'_a \}
\ ,
\]
and $\rC^*$-norm defined by $\| T \| := \| T_o \|$, where $o \in K$ is fixed.
(Since  $K$ is pathwise connected, for every $a \in K$ there is
a path $p : a \to o$, so that $\| T_o \| = \| \ad U_{*,[p]} (T_a) \| = \| T_a \|$.)

\begin{lemma}
\label{lem.rep01}
$\Si(\cB \cH , \ad U)_K$ is isomorphic to the $\rC^*$-algebra
of operators in $\cB(\cH_o)$ that are invariant under the holonomy representation
\begin{equation}
\label{eq.holrep}
\ad U_* : \pi_1^o(K) \to {\mathrm{Aut}} \cB(\cH_o) \ .
\end{equation}
\end{lemma}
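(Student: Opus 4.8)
The plan is to show that evaluation at the base point,
\[
\Phi : \Si(\cB \cH , \ad U)_K \to \cB(\cH_o)
\ , \quad
\Phi(T) := T_o \ ,
\]
is a well-defined $*$-isomorphism onto the fixed-point algebra $\cB(\cH_o)^{\ad U_*}$ of operators invariant under the holonomy representation (\ref{eq.holrep}). Since the $*$-algebraic operations on sections are defined fibrewise and the norm is $\|T\| = \|T_o\|$, the map $\Phi$ is automatically a $*$-homomorphism and isometric, hence injective; the whole content lies in identifying its range.

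The key intermediate step, which I expect to be the crux of the argument, is to promote the defining relation of a section, stated only for direct inclusions $a \leq \tilde a$, to covariant constancy along the whole cocycle $\ad U$. Concretely, for a section $T$ and any $1$-simplex $b$ I would apply the section relation at the two inclusions $\partial_0 b \leq |b|$ and $\partial_1 b \leq |b|$ to get $T_{|b|} = \ad U_{|b| \partial_1 b}(T_{\partial_1 b})$ and $T_{\partial_0 b} = \ad U_{\partial_0 b |b|}(T_{|b|})$; composing and recalling $U_b = U_{\partial_0 b |b|}\, U_{|b| \partial_1 b}$ (as in (\ref{Bc:2})) yields $\ad U_b(T_{\partial_1 b}) = T_{\partial_0 b}$. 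Extending multiplicatively to paths as in (\ref{Cb:6}) gives $\ad U_p(T_a) = T_{\tilde a}$ for every path $p : a \to \tilde a$. Specialising to a loop $p$ over $o$ and using homotopy invariance of $U_p$ (the $1$-cocycle relation) shows $\ad U_{*,[p]}(T_o) = T_o$ for all $[p] \in \pi_1^o(K)$; thus $\Phi(T) \in \cB(\cH_o)^{\ad U_*}$.

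For surjectivity I would construct the inverse by parallel transport. Given an invariant $S \in \cB(\cH_o)$, for each $a \in K$ I choose a path $p : o \to a$ and set $T_a := \ad U_p(S)$. Independence of the choice of $p$ is exactly where invariance of $S$ enters: if $q : o \to a$ is a second path, then $\overline q * p$ is a loop over $o$ and $\ad U_p(S) = \ad U_q(S)$ follows from $\ad U_{*,[\overline q * p]}(S) = S$ together with $U_{\overline q * p} = U_q^{-1} U_p$. Taking $p = \io_o$ gives $T_o = S$, and for $a \leq \tilde a$ the path $(\tilde a a) * p : o \to \tilde a$, together with $U_{(\tilde a a)} = U_{\tilde a a}$, yields $\ad U_{\tilde a a}(T_a) = T_{\tilde a}$, so $T = \{T_a\}$ is a genuine section. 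Hence $\Phi(T) = S$ and $\Phi$ is onto.

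Putting these together, $\Phi$ is an isometric $*$-isomorphism of $\Si(\cB \cH , \ad U)_K$ onto $\cB(\cH_o)^{\ad U_*}$. The only delicate points are the two well-definedness checks, namely that sections are covariantly constant along arbitrary $1$-simplices and that parallel transport of an invariant operator is path-independent; both reduce to the $1$-cocycle and homotopy-invariance properties of $\ad U$ already established in \S\ref{Bc} and \S\ref{Cb}.
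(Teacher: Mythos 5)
Your proposal is correct and follows essentially the same route as the paper: evaluation at the base point one way, and parallel transport $T_a := \ad U_p(T_o)$ along paths from $o$ the other way, with invariance under the holonomy supplying the key identity in both directions. The only cosmetic difference is that the paper fixes a path frame $P_o$ (so no well-definedness check is needed), whereas you transport along an arbitrary path and prove path-independence from invariance---the underlying computation is identical.
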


\begin{proof}
If $T \in \Si(\cB \cH , \ad U)_K$, then it easily follows fromthe definition of 
the holonomy representation (\ref{eq.u*}) that $T_o$ is $\ad U_*$-invariant.
%
On the converse, 
fix a path frame $P_o=\{ p_{(a,o)}, \ a\in K \}$. If $T_o\in \cB(\cH_o)$ is $\ad U_*$-invariant, 
define  
\[
 T_a := \ad U_{p_{(a,o)}} (T_o) \ , \qquad a\in K \ .
\] 
For any inclusion $a\leq \tilde a$ we have that 
$\ad U_{\tilde aa} (T_a) =$  
$\ad U_{p_{(\tilde a,o)}}\circ \ad \{U_{p_{(o,\tilde a)}}\,U_{\tilde ao}\,U_{p_{(a,o)}}\} (T_o)$ 
$=\ad U_{p_{(\tilde a,o)}} (T_o) =  T_{\tilde a}$, by $\ad U_*$-invariance.
thus $T$ is a section and the Lemma is proved.
\end{proof}

Now, let  $(\pi,U)$ be a representation of $(\cA,\jmath)_K$ over the family
of Hilbert spaces $\cH := \{ \cH_a \}$, so that $(\cH,U)_K$ 
is a Hilbert net bundle. We define
\begin{equation}
\label{d.piU}
\Si (\pi,U) 
:= 
\{ T \in \Si(\cB \cH , \ad U)_K : 
   T_a \in (\pi_a,\pi_a) , \forall a \in K  \} 
\ ,
\end{equation}
this is clearly a $\rC^*$-algebra. 
Given the covariant representation (\ref{eq.Arep}), we also define the 
$\rC^*$-algebra 
\[
({\pi}^{\uparrow}_*,{\pi}^{\uparrow}_*)_U 
:= 
({\pi}^{\uparrow}_*,{\pi}^{\uparrow}_*) \cap 
\{  T \in \cB(\cH_o) : \ad U_{*,[p]}(T)=T \ , \ \forall p \in \pi_1^o(K)  \}
\ .
\]
It is clear that changing $o \in K$ we get a $\rC^*$-algebra isomorphic
to $({\pi}^{\uparrow}_*,{\pi}^{\uparrow}_*)_U$.

\begin{proposition}
\label{prop.red}
Let $(\cA,\jmath)_K$ be a net of $\rC^*$-algebras. Then for any representation 
$(\pi,U)$ of $(\cA,\jmath)_K$  there is an isomorphism
$({\pi}^{\uparrow}_*,{\pi}^{\uparrow}_*)_U   \simeq  \Si(\pi,U)$.  
\end{proposition}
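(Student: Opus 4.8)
The plan is to realize the asserted isomorphism as the restriction of the isomorphism already established in Lemma \ref{lem.rep01}. That lemma identifies $\Si(\cB \cH , \ad U)_K$ with the $\rC^*$-algebra of $\ad U_*$-invariant operators in $\cB(\cH_o)$, via $T = \{T_a\} \mapsto T_o$, the inverse sending an $\ad U_*$-invariant operator $T_o$ to the section $T_a := \ad U_{p_{(a,o)}}(T_o)$ built from a fixed path frame $P_o$. Since $\Si(\pi,U) \subseteq \Si(\cB \cH , \ad U)_K$ and, by definition, $(\pi^\uparrow_*,\pi^\uparrow_*)_U$ is contained in the $\ad U_*$-invariant operators of $\cB(\cH_o)$, it will suffice to check that this isomorphism carries $\Si(\pi,U)$ exactly onto $(\pi^\uparrow_*,\pi^\uparrow_*)_U$. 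Recalling that $\pi^\uparrow_* = \pi^\uparrow_o$, what I must prove is: for an $\ad U_*$-invariant $T_o$ with associated section $T$,
\[
T_a \in (\pi_a,\pi_a)\ \text{for all}\ a \in K \iff T_o \in (\pi^\uparrow_o,\pi^\uparrow_o) .
\]

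First I would unwind the right-hand side. By the defining formula (\ref{Bd:12}), $\pi^\uparrow_o$ is generated by the operators $\pi^\uparrow_o(p,A) = \ad U_p(\pi_a(A)) = U_p\,\pi_a(A)\,U_p^*$, with $p:a\to o$ and $A\in\cA_a$, so $T_o \in (\pi^\uparrow_o,\pi^\uparrow_o)$ means precisely that $T_o$ commutes with all such $U_p\pi_a(A)U_p^*$. Then, using $T_a = U_{p_{(a,o)}} T_o U_{p_{(a,o)}}^*$ and conjugating by the unitary $U_{p_{(a,o)}}$, a direct computation shows that $T_a \in (\pi_a,\pi_a)$ is equivalent to $T_o$ commuting with $U_{p_{(o,a)}}\pi_a(A)U_{p_{(o,a)}}^* = \pi^\uparrow_o(p_{(o,a)},A)$ for all $A\in\cA_a$; that is, to $T_o$ commuting with the generators coming from the path-frame paths $p_{(o,a)}:a\to o$ alone.

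The hard part — really the only nonroutine step — will be the reduction from arbitrary paths to the path frame. Given any $p:a\to o$, the loop $p*\overline{p_{(o,a)}}$ over $o$ has holonomy $V := U_{p*\overline{p_{(o,a)}}} = U_p\,U_{p_{(o,a)}}^*$, and a short manipulation gives $\pi^\uparrow_o(p,A) = \ad V\big(\pi^\uparrow_o(p_{(o,a)},A)\big)$. Since $T_o$ is $\ad U_*$-invariant it commutes with $V$, hence it commutes with $\pi^\uparrow_o(p,A)$ if and only if it commutes with $\pi^\uparrow_o(p_{(o,a)},A)$. This shows that for $\ad U_*$-invariant $T_o$ the two commutation conditions coincide, which closes the displayed equivalence.

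Finally I would combine this equivalence with the $\ad U_*$-invariance of $T_o$: the image of $\Si(\pi,U)$ under the map of Lemma \ref{lem.rep01} is exactly $(\pi^\uparrow_o,\pi^\uparrow_o)\cap\{T_o : \ad U_{*,[p]}(T_o)=T_o,\ \forall [p]\in\pi^o_1(K)\} = (\pi^\uparrow_*,\pi^\uparrow_*)_U$. As this map is the restriction of the $^*$-isomorphism of Lemma \ref{lem.rep01}, it is itself a $^*$-isomorphism onto $(\pi^\uparrow_*,\pi^\uparrow_*)_U$, completing the proof.
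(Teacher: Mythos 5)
Your proof is correct and follows essentially the same route as the paper's: the paper likewise builds the isomorphism from a path frame, as $w \mapsto \{\ad U_{p_{(a,o)}}(w)\}_a$ with inverse $T \mapsto T_o$, and its verification rests on the same conjugation and holonomy-invariance manipulations you use. If anything, your packaging via the restriction of the isomorphism of Lemma \ref{lem.rep01} makes explicit the inverse-direction check (that $T_o$ commutes with every generator ${\pi}^{\uparrow}_o(p,A)$ for arbitrary $p:a\to o$, reduced to path-frame generators by invariance under the loop holonomies $U_{p*\overline{p_{(o,a)}}}$), a point the paper leaves as ``obvious''.
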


\begin{proof}
We pick a path frame $P_o=\{ p_{(a,o)}, \  a\in K\}$ and define a map
\[
\beta : ({\pi}^{\uparrow}_*,{\pi}^{\uparrow}_*)_U   \to  \Si(\pi,U)
\ \ , \ \
\beta(w)_a :=  \ad U_{p_{(a,o)}}(w)  \ , \ \ a\in K 
\ .
\]
The above map has image in $\Si(\pi,U)$ since, for all $a \leq \tilde a$,
\begin{align*}
\beta(w)_{\tilde a}\,  U_{\tilde aa}  & = 
U_{p_{(\tilde a,o)}}\, w \, U_{p_{(o,\tilde a)}} \, U_{\tilde aa} = 
 U_{\tilde aa} \, \ad U_{\overline{(\tilde aa)}*p_{(\tilde a,o)}}(w)  \\
& = U_{\tilde aa} \, \left\{\ad U_{p_{(\tilde a,o)}}\circ 
\ad U_{*,[p_{(o,a)}*\overline{(\tilde aa)}*p_{(\tilde a,o)}]}\right\}(w) =
 U_{\tilde aa} \, \ad U_{p_{(\tilde a,o)}}(w) \\
& = U_{\tilde aa} \beta(w)_a \ ,
\end{align*}
%
%
and, for all $T \in \cA_a$, $a \in K$,
\begin{align*}
\beta(w)_a \, \pi_a(T) & =
\ad U_{p_{(a,o)}} ( w ) \, \pi_a(T)  =
\ad U_{p_{(a,o)}}(w) \, {\pi}^{\uparrow}_a (\iota_a,T) \\
&  =
\ad U_{p_{(a,o)}} ( w \, {\pi}^{\uparrow}_a(p_{(o,a)} , T ))  
= \ad U_{p_{(a,o)}}( {\pi}^{\uparrow}_a( p_{(o,a)} , T ) \, w)   \\ 
& = {\pi}^{\uparrow}_a( \iota_a , T ) \, \ad U_{p_{(a,o)}}(w)  =
\pi_a(T) \, \beta(w)_a
\ .
\end{align*}
Finally, $\beta$ is obviously an isometric $^*$-morphism, 
and defining
$\beta'(T) := T_o$,
$T \in \Si(\pi,U)$,
yields an inverse of $\beta$.
\end{proof}

\begin{definition}
A subrepresentation of $(\pi,U)$ is given by a family 
$\cH' := \{ \cH'_a \subseteq \cH_a \}_a$ 
of $\pi_a$-stable Hilbert subspaces such that
$U_{\tilde aa}\cH'_a = \cH'_{\tilde a}$, $\forall a \leq \tilde a$.
\end{definition}

\begin{corollary}
\label{cor.red}
Subrepresentations of $(\pi,U)$ are in one-to-one correspondence
with projections of $({\pi}^{\uparrow}_*,{\pi}^{\uparrow}_*)_U$.
\end{corollary}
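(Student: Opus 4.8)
The plan is to reduce the statement to a correspondence between subrepresentations and projections of $\Si(\pi,U)$, and then transport this along the isomorphism $\beta$ of Prop.\ref{prop.red}. Since $\beta : ({\pi}^{\uparrow}_*,{\pi}^{\uparrow}_*)_U \to \Si(\pi,U)$ is a $^*$-isomorphism, it restricts to a bijection between the projections of the two algebras; so it suffices to set up a one-to-one correspondence between subrepresentations of $(\pi,U)$ and projections of $\Si(\pi,U)$, which is the more natural object since the latter is phrased directly in terms of the family $\{\cH_a\}$.

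First I would pass from a subrepresentation to a projection. Given $\cH' = \{\cH'_a\}$, let $E_a \in \cB(\cH_a)$ be the orthogonal projection onto $\cH'_a$. Since $\pi_a$ is a $^*$-representation, the $\pi_a$-stability of $\cH'_a$ forces its orthogonal complement to be stable as well, whence $E_a \in (\pi_a,\pi_a)$; and the unitarity of $U_{\tilde aa}$ together with $U_{\tilde aa}\cH'_a = \cH'_{\tilde a}$ gives $\ad U_{\tilde aa}(E_a) = E_{\tilde a}$, which is exactly the section condition \eqref{d.piU} defining $\Si(\pi,U)$. Thus $E := \{E_a\}$ is a projection in $\Si(\pi,U)$.

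Conversely, from a projection $E = \{E_a\} \in \Si(\pi,U)$ I would recover a subrepresentation by setting $\cH'_a := E_a\cH_a$. Membership $E_a \in (\pi_a,\pi_a)$ makes $\cH'_a$ a $\pi_a$-stable subspace, and the section relation $\ad U_{\tilde aa}(E_a) = E_{\tilde a}$ combined with the surjectivity of the unitary $U_{\tilde aa}$ yields $U_{\tilde aa}\cH'_a = E_{\tilde a} U_{\tilde aa}\cH_a = E_{\tilde a}\cH_{\tilde a} = \cH'_{\tilde a}$. These two assignments are visibly inverse to one another, establishing the correspondence.

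The argument is essentially bookkeeping and presents no serious obstacle. The only point requiring care is the faithful translation between the geometric condition $U_{\tilde aa}\cH'_a = \cH'_{\tilde a}$ on subspaces and the algebraic condition $\ad U_{\tilde aa}(E_a) = E_{\tilde a}$ on projections, which rests entirely on the unitarity of the inclusion operators $U_{\tilde aa}$. Once this is in hand, composing with $\beta$ from Prop.\ref{prop.red} delivers the stated one-to-one correspondence with the projections of $({\pi}^{\uparrow}_*,{\pi}^{\uparrow}_*)_U$.
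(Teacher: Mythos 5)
Your proposal is correct and follows essentially the same route as the paper: reduce via the isomorphism of Prop.\ref{prop.red} to projections of $\Si(\pi,U)$, then pass back and forth between a subrepresentation $\{\cH'_a\}$ and the family of orthogonal projections $\{E_a\}$, using unitarity of the $U_{\tilde aa}$ to translate $U_{\tilde aa}\cH'_a = \cH'_{\tilde a}$ into $\ad U_{\tilde aa}(E_a)=E_{\tilde a}$. Your write-up is in fact slightly more careful than the paper's (which checks the intertwining relation only on vectors of $\cH'_a$), but the content is the same.
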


\begin{proof}
By the previous proposition it suffices to  check 
projections of $\Si (\pi,U)$.
If $\cH'$ is a subrepresentation of $(\pi,U)$ then for each $a \in K$ 
we define $E_a \in B(\cH_a)$ to be  the projection of $\cH_a$ on $\cH'_a$. 
Clearly $E_a \in (\pi_a,\pi_a)$, and since 
$U_{\tilde aa} E_a v = U_{\tilde aa}v = E_{\tilde a}\, U_{\tilde aa}v$
for all $v \in \cH'_a$, we conclude that 
$\ad U_{\tilde aa}E_a = E_{\tilde a}$, $a \leq \tilde a$.
Conversely, if $E=E^2=E^* \in \Si (\pi,U)$ then we define $\cH'_a := E_a \cH_a$,
$a \in K$, and  check that $\cH'$ is a subrepresentation of $(\pi,U)$.
\end{proof}

Let $1_o \in \overline \cA_*$ denote the identity; clearly 
$1_o$ is invariant under the $\pi_1^o(K)$-action, thus,
by covariance, ${\pi}^{\uparrow}_*(1_o) \in ({\pi}^{\uparrow}_*,{\pi}^{\uparrow}_*)_U$.
This yields a subrepresentation of $(\pi,U)$ which, by construction, is unital
and whose complement  is a null representation.
We say that $(\pi,U)$ is \emph{non-degenerate} whenever ${\pi}^{\uparrow}_*(1_o)$
is the identity of $\cB(\cH_o)$; note that in this case each $\pi_o$, $o \in K$,
is a non-degenerate Hilbert space representation.

We say that $(\pi,U)$ is \emph{irreducible} whenever it does
not admit non-vanishing subrepresentations. By the previous corollary, the irreducibility  of $(\pi,U)$ is equivalent to the condition 
\[
({\pi}^{\uparrow}_*,{\pi}^{\uparrow}_*)_U \simeq \bC \ .
\]
Note that irreducibility of $(\pi,U)$ does not necessarily imply irreducibility
of the representations $\pi_a$, $a \in K$.


\

\noindent {\bf Cyclic vectors.} 
Let us consider, for each $a \in K$, the concrete $\rC^*$-algebra
generated by
\[
\{ 
\ad U_p \circ \pi_{\tilde a}(\cA_{\tilde a})
\ \ , \ \
p : \tilde a \to a
\}
\ \subseteq  \
\cB(\cH_a)
\]
coinciding, by Lemmas \ref{Cb:7} and \ref{Cb:8}, with ${\pi}^{\uparrow}_a (\overline \cA_a)$.
We fix $o \in K$ as usual, consider $v \in \cH_o$ and, given the path frame $P_o := \{ p_{(a,o)} \ , a\in K \}$, 
define the closed vector spaces
\[
U^av := \mathrm{closed \ span}\{ U_p U_{p_{(a,o)}} v \ , \ p : a \to a \}
\ \ , \ \
a \in K
\ .
\]
The space $U^av$ is independent of the choice of  path frame,  
since a different path frame $\tilde P_o=\{ \tilde p_{(a,o)} \ , a\in K\}$ yields 
$U_pU_{\tilde p_{(a,o)}}v = U_{ p *\tilde p_{(a,o)} *\overline{p_{(a,o)}}}\,  U_{p_{(a,o)}}v$.
We then define the family $\cH_v$ of Hilbert subspaces
\[
\cH_{v,a} := {\pi}^{\uparrow}_a (\overline \cA_a) U^a v
\ \ , \ \
a \in K
\ .
\]
Elementary computations show that 
\[
\pi_a(\cA_a)\cH_{v,a} \subseteq \cH_{v,a}
\ \ , \ \
U_{\tilde aa}\cH_{v,a} = \cH_{v,\tilde a}
\ \ , \ \
\forall a \leq \tilde a
\ ,
\]
proving the following result:
\begin{lemma}
Let $(\pi,U)$ be a representation of $(\cA,\jmath)_K$. Given $o \in K$ 
and $v \in \cH_o$, the pair $( \pi|_{\cH_v} , U|_{\cH_v} )$ yields a
subrepresentation of $(\pi,U)$.
\end{lemma}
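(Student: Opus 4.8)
The plan is to check directly the two conditions in the definition of a subrepresentation: that each $\cH_{v,a}$ is $\pi_a$-stable, and that $U_{\tilde a a}\cH_{v,a}=\cH_{v,\tilde a}$ for all $a\leq\tilde a$. Two facts recalled from the main text drive the argument. Since $\pi_a(A)=\pi^\uparrow_a\circ\e_a(A)=\pi^\uparrow_a(\io_a,A)$ with $(\io_a,A)\in\overline{\cA}_a$, the algebra $\pi_a(\cA_a)$ is contained in the $\rC^*$-algebra $\pi^\uparrow_a(\overline{\cA}_a)$. Moreover $(\pi^\uparrow,U)$ is a representation of the enveloping net bundle (Lemma~\ref{Cb:7}), so $U_{\tilde a a}\in(\pi^\uparrow_a,\pi^\uparrow_{\tilde a}\circ\overline{\jmath}_{\tilde a a})$; and since $(\overline{\cA},\overline{\jmath})_K$ is a net bundle, $\overline{\jmath}_{\tilde a a}$ carries $\overline{\cA}_a$ onto $\overline{\cA}_{\tilde a}$.

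First I would dispose of $\pi_a$-stability: because $\pi^\uparrow_a(\overline{\cA}_a)$ is multiplicatively closed and contains $\pi_a(\cA_a)$,
\[
\pi_a(\cA_a)\,\cH_{v,a}=\pi_a(\cA_a)\,\pi^\uparrow_a(\overline{\cA}_a)\,U^av\subseteq\pi^\uparrow_a(\overline{\cA}_a)\,U^av=\cH_{v,a},
\]
and the inclusion persists after taking closures since each $\pi_a(A)$ is continuous.

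The substantive step is the identity $U_{\tilde a a}U^av=U^{\tilde a}v$. A generating vector $U_pU_{p_{(a,o)}}v$ of $U^av$, with $p:a\to a$, is sent by $U_{\tilde a a}$ to $U_{(\tilde a a)*p*p_{(a,o)}}v$. Inserting at the $o$-end the loop $\overline{p_{(\tilde a,o)}}*p_{(\tilde a,o)}\sim\io_o$ and using the homotopy invariance of $b\mapsto U_b$ (which follows from the $1$-cocycle relation $U_{\partial_0c}U_{\partial_2c}=U_{\partial_1c}$), one rewrites this vector as $U_qU_{p_{(\tilde a,o)}}v$, where $q:=(\tilde a a)*p*p_{(a,o)}*\overline{p_{(\tilde a,o)}}$ is a loop over $\tilde a$; hence it lies in $U^{\tilde a}v$. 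The reverse inclusion follows from the symmetric computation applied to $U_{a\tilde a}=U_{\tilde a a}^{*}$. Combining this with the intertwining property gives
\[
U_{\tilde a a}\cH_{v,a}=\pi^\uparrow_{\tilde a}\!\bigl(\overline{\jmath}_{\tilde a a}(\overline{\cA}_a)\bigr)\,U_{\tilde a a}U^av=\pi^\uparrow_{\tilde a}(\overline{\cA}_{\tilde a})\,U^{\tilde a}v=\cH_{v,\tilde a},
\]
as required.

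I expect the only delicate point to be the homotopy bookkeeping in the identity $U_{\tilde a a}U^av=U^{\tilde a}v$: one must track the composition convention to confirm that $q$ is indeed a loop over $\tilde a$ and that the inserted tail is homotopic to the trivial loop. Everything else is routine, and the independence of $U^av$ from the path frame (already established before the statement) ensures that $\cH_v$ is well defined.
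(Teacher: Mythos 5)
Your proof is correct and takes the same route as the paper: the paper's own ``proof'' consists precisely of asserting that elementary computations give $\pi_a(\cA_a)\cH_{v,a}\subseteq\cH_{v,a}$ and $U_{\tilde aa}\cH_{v,a}=\cH_{v,\tilde a}$, and your argument supplies exactly those computations. In particular, your key identity $U_{\tilde aa}U^av=U^{\tilde a}v$, obtained from homotopy invariance of $p\mapsto U_p$ together with $\overline{p_{(\tilde a,o)}}*p_{(\tilde a,o)}\sim\io_o$, and the use of the intertwining relation $U_{\tilde aa}\,{\pi}^{\uparrow}_a(\cdot)={\pi}^{\uparrow}_{\tilde a}\circ\overline{\jmath}_{\tilde aa}(\cdot)\,U_{\tilde aa}$ with $\overline{\jmath}_{\tilde aa}$ surjective, are precisely the steps the paper leaves implicit.
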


\begin{definition}
Let $(\pi,U)$ be a representation over the family of Hilbert spaces $\cH$ 
and $o \in K$. Then $v \in \cH_o$ is said to be {\bf cyclic for $(\pi,U)$} 
whenever $\cH_{v,a} = \cH_a$ for any $a \in K$.
\end{definition}

\begin{proposition}
Let $(\pi,U)$ be a representation of the net $(\cA,\jmath)_K$.
Then cyclic vectors for $(\pi,U)$ are in one-to-one correspondence
with cyclic vectors for the induced crossed product
representation ${\pi}^{\uparrow}_* \rtimes U_* \ : \ 
\overline \cA_* \rtimes \pi_1^o(K) \ \to \ \cB(\cH_o)$. 
\end{proposition}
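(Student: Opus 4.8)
The plan is to reduce the claim to the familiar correspondence between cyclic vectors for a covariant representation of a dynamical system and cyclic vectors for the associated crossed-product representation. The statement asserts that $v\in\cH_o$ is cyclic for $(\pi,U)$ in the sense of the net (i.e. $\cH_{v,a}=\cH_a$ for all $a\in K$) precisely when $v$ is cyclic for the crossed product representation ${\pi}^{\uparrow}_*\rtimes U_*$ of $\overline\cA_*\rtimes\pi_1^o(K)$ on $\cB(\cH_o)$.

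**The key steps.**

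First I would unwind the definition of $\cH_{v,a}$. By construction $\cH_{v,a}={\pi}^{\uparrow}_a(\overline\cA_a)\,U^av$, where $U^av$ is the closed span of the vectors $U_pU_{p_{(a,o)}}v$ as $p$ ranges over loops at $a$. Since, by Lemmas \ref{Cb:7} and \ref{Cb:8}, the concrete algebra ${\pi}^{\uparrow}_a(\overline\cA_a)$ is generated by the operators $\ad U_p\circ\pi_{\tilde a}(\cA_{\tilde a})$ for paths $p:\tilde a\to a$, the whole construction at a general fibre $a$ is obtained from the fibre over $o$ by transporting along $p_{(a,o)}$. Concretely, I would show that $\cH_{v,a}=U_{p_{(a,o)}}\cH_{v,o}$, using that $U_{p_{(a,o)}}$ intertwines the relevant algebras and that the span defining $U^av$ is carried onto the span defining $U^ov$ (here the path-frame independence of $U^av$, already noted in the excerpt, is what makes this transport well defined up to the holonomy at $o$). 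Because each $U_{p_{(a,o)}}:\cH_o\to\cH_a$ is unitary, the condition $\cH_{v,a}=\cH_a$ for all $a$ is therefore equivalent to the single condition $\cH_{v,o}=\cH_o$ at the base point.

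Second, I would identify $\cH_{v,o}$ with the cyclic subspace generated by $v$ under the crossed product. At the base fibre, $U^ov$ is the closed span of $U_p v$ over loops $p:o\to o$, which is exactly the span of $U_{*,[p]}v$ for $[p]\in\pi_1^o(K)$, i.e. the subspace generated by $v$ under the unitary representation $U_*$ of the fundamental group. Applying ${\pi}^{\uparrow}_o(\overline\cA_o)={\pi}^{\uparrow}_*(\overline\cA_*)$ to this span produces precisely the closed linear span of $\{{\pi}^{\uparrow}_*(T)\,U_{*,[p]}\,v : T\in\overline\cA_*,\ [p]\in\pi_1^o(K)\}$, which is by definition the cyclic subspace of $v$ for the integrated representation ${\pi}^{\uparrow}_*\rtimes U_*$ of $\overline\cA_*\rtimes\pi_1^o(K)$. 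Thus $\cH_{v,o}=\cH_o$ holds if and only if $v$ is cyclic for ${\pi}^{\uparrow}_*\rtimes U_*$.

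**The main obstacle.**

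Combining the two steps gives the claimed bijection: cyclicity of $v$ for $(\pi,U)$ is equivalent to $\cH_{v,o}=\cH_o$, which in turn is equivalent to cyclicity of $v$ for the crossed product representation. The point requiring the most care is the first step, the identification $\cH_{v,a}=U_{p_{(a,o)}}\cH_{v,o}$: one must verify that transporting along the path frame really carries the generating family at $a$ to the generating family at $o$, and that the closed span is preserved. This rests on checking that $\ad U_{p_{(a,o)}}$ maps ${\pi}^{\uparrow}_a(\overline\cA_a)$ onto ${\pi}^{\uparrow}_o(\overline\cA_o)$ and that $U_{p_{(a,o)}}U^ov=U^av$, both of which follow from homotopy invariance of $\jmath_p$ and $U_p$ together with the path-frame independence already established in the excerpt; I expect no genuine difficulty beyond keeping the path bookkeeping straight.
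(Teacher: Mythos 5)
Your proposal is correct and follows essentially the same route as the paper's proof: identify $\cH_{v,o}$ with the crossed-product cyclic subspace (using that $p_{(o,o)}$ is homotopic to the trivial loop, so $U^ov$ is spanned by the $U_{*,[p]}v$), and then use unitary transport along paths to reduce the condition $\cH_{v,a}=\cH_a$ for all $a\in K$ to the single condition $\cH_{v,o}=\cH_o$. The only difference is organizational—you establish the transport identity $\cH_{v,a}=U_{p_{(a,o)}}\cH_{v,o}$ up front, while the paper invokes the same unitarity argument only in the converse direction—so the two arguments coincide in substance.
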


\begin{proof}
Let  $v \in \cH_o$ be cyclic for $(\pi,U)$. 
Since $p_{oo}$ is homotopic to the trivial loop we conclude that  
$U^ov = \{ U_pv , \ p : o \to o \}$,
thus 
$
\cH_{v,o} 
\ = \ 
\{ {\pi}^{\uparrow}_o(\overline \cA_o) \, U_p \, v \ , \ p : o \to o \}
$,
and the condition $\cH_{v,o} = \cH_o$ is clearly equivalent to the desired
cyclicity condition for ${\pi}^{\uparrow}_* \rtimes U_*$.
Conversely, assume that $v \in \cH_o$ is cyclic for ${\pi}^{\uparrow}_* \rtimes U_*$.
Then the above argument implies that
$\cH_o = \cH_{v,o}$,
and since each $U_p : \cH_{v,o} \to \cH_a$, $p : o \to a$, is unitary, we conclude
that $\cH_{v,o} = \cH_{v,a}$ for any $a \in K$, as desired.
\end{proof}

\begin{corollary}
Any non-degenerate representation $(\pi,U)$ is a direct sum of
cyclic representations.
\end{corollary}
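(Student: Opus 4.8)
The plan is to reduce the statement to the classical decomposition theorem for non-degenerate representations of a single $\rC^*$-algebra, transported through the dictionary established earlier in this appendix. Concretely, I would work with the covariant representation $(\pi^\uparrow_*,U_*)$ of the holonomy dynamical system $(\overline{\cA}_*,\pi_1^o(K),\overline{\jmath}_*)$ associated with $(\pi,U)$ (see Lemmas \ref{Cb:7} and \ref{Cb:8}), or equivalently with the representation $\pi^\uparrow_* \rtimes U_*$ of the crossed product $\overline{\cA}_* \rtimes \pi_1^o(K)$ on $\cH_o$. The key observation is that, by Cor.~\ref{cor.red} and Prop.~\ref{prop.red}, subrepresentations of $(\pi,U)$ correspond bijectively to projections in $({\pi}^\uparrow_*,{\pi}^\uparrow_*)_U$, and this $\rC^*$-algebra is precisely the commutant of the set $\pi^\uparrow_*(\overline{\cA}_*) \cup U_*(\pi_1^o(K))$, that is, the commutant of the representation $\pi^\uparrow_* \rtimes U_*$. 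Combined with the preceding proposition on cyclic vectors, which identifies cyclic vectors for $(\pi,U)$ with cyclic vectors for $\pi^\uparrow_* \rtimes U_*$, this shows that cyclic subrepresentations of $(\pi,U)$ correspond to cyclic subrepresentations of $\pi^\uparrow_* \rtimes U_*$.

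First I would check that non-degeneracy passes to the crossed product level. By definition $(\pi,U)$ is non-degenerate when $\pi^\uparrow_*(1_o)$ is the identity of $\cB(\cH_o)$; since $1_o$ is the unit of $\overline{\cA}_*$ and the crossed product is generated by $\pi^\uparrow_*(\overline{\cA}_*)$ together with the unitaries $U_*(\pi_1^o(K))$, this says exactly that $\pi^\uparrow_* \rtimes U_*$ is a non-degenerate representation of $\overline{\cA}_* \rtimes \pi_1^o(K)$ on $\cH_o$.

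Then I would invoke the standard Zorn's lemma argument: a non-degenerate representation of a $\rC^*$-algebra admits an orthogonal family $\{E^{(i)}\}$ of projections in its commutant, with cyclic ranges and $\sum_i E^{(i)} = 1_{\cH_o}$ in the strong sense. Each $E^{(i)}$ lies in $({\pi}^\uparrow_*,{\pi}^\uparrow_*)_U$, hence by Cor.~\ref{cor.red} determines a subrepresentation $\{\cH_a^{(i)}\}_a$ of $(\pi,U)$, with $\cH_a^{(i)} = \ad U_{p_{(a,o)}}(E^{(i)})\,\cH_a$ for a fixed path frame $P_o$. Since $\ad U_{p_{(a,o)}}$ is a $\ast$-isomorphism of $\cB(\cH_o)$ onto $\cB(\cH_a)$ carrying $1_{\cH_o}$ to $1_{\cH_a}$, the relation $\sum_i E^{(i)} = 1_{\cH_o}$ transports fibrewise to $\cH_a = \bigoplus_i \cH_a^{(i)}$ for every $a \in K$, so that $(\pi,U) = \bigoplus_i (\pi,U)\!\restriction_{\cH^{(i)}}$. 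Finally, each summand carries a cyclic vector by the preceding proposition on cyclic vectors, hence is a cyclic representation, completing the argument.

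The routine but essential points to verify are that the commutant of $\pi^\uparrow_* \rtimes U_*$ really coincides with $({\pi}^\uparrow_*,{\pi}^\uparrow_*)_U$ (immediate from the definition of the latter together with the equivalence $\ad U_{*,[p]}(T)=T \iff [U_{*,[p]},T]=0$), and that the fibrewise transport of the decomposition is compatible with the inclusion operators $U_{\tilde a a}$, which follows directly from the constructions underlying Cor.~\ref{cor.red}. The only genuine input beyond this dictionary is the classical decomposition theorem for $\rC^*$-algebras, so I expect no serious obstacle; the main care needed is the bookkeeping ensuring the orthogonal projections live in the correct relative commutant, so that Cor.~\ref{cor.red} and the cyclic-vector proposition apply verbatim.
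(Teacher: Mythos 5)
Your proposal is correct and follows exactly the route the paper takes: the paper's proof consists of the single line ``it suffices to perform the direct sum decomposition of ${\pi}^{\uparrow}_* \rtimes U_*$'', and your argument simply spells out the details of that reduction, using Prop.~\ref{prop.red}, Cor.~\ref{cor.red} and the cyclic-vector proposition to transport the standard cyclic decomposition of the crossed-product representation back to $(\pi,U)$. The extra bookkeeping you supply (identifying $({\pi}^{\uparrow}_*,{\pi}^{\uparrow}_*)_U$ with the commutant of $\pi^\uparrow_* \rtimes U_*$, and checking the fibrewise transport of the projections) is exactly what the paper leaves implicit.
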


\begin{proof}
It suffices to perform the direct sum decomposition of 
${\pi}^{\uparrow}_* \rtimes U_*$.
\end{proof}

\subsection{Vector states.}
\label{C.3}

We start with a preliminary remark on representations.
Let $(\cA,\jmath)_K$ be a net of $\rC^*$-algebras and 
$\pi := \{ \pi_a : \cA_a \to B(\cH_a) \}$
a family of unital representations on the family of Hilbert spaces $\cH = \{ \cH_a \}$.
We say that $(\pi,U)$ is a \emph{quasi-representation} whenever $U$ is a family of 
\emph{isometries} fulfilling
\[
U_{oa} \in ( \pi_a  , \pi_o \circ \jmath_{oa} )
\ \ , \ \
U_{eo} = U_{ea} \circ U_{ao}
\ \ , \ \
\forall a \leq o \leq e
\ .
\]
It is easily verified that the pair $(\cH,U)_K$ defines a net of Hilbert spaces.

\

\noindent {\bf The GNS construction.} 
Let $\omega$ be a state of the net of $\rC^*$-algebras $(\cA,\jmath)_K$.
Then the GNS construction yields a family $\cH := \{ \cH_a \}$ of Hilbert 
spaces with maps
$v_a : \cA_a \to \cH_a$, $a \in K$,
and a family of representations 
\[
\pi_a : \cA_a \to \cB(\cH_a)
\ \ , \ \
\{ \pi_a(T) \} \{ v_a(T') \} := v_a(TT')
\ \ , \ \
\forall T,T' \in \cA_a
\ ,
\]
having cyclic vectors $v_a := v(1_a) \in \cH_a$. 
Each $\jmath_{\tilde aa}$ defines the isometry 
\[
U_{\tilde aa} : \cH_a \to \cH_{\tilde a}
\ \ , \ \
U_{\tilde aa} v_a(T) := v_{\tilde a}( \jmath_{\tilde aa}T )
\ , \
T \in \cA_o
\ \ , \ \
a \leq \tilde a
\ ,
\]
and hence a quasi-representation $(\pi,U)$ of 
$(\cA,\jmath)_K$. Note that $v := \{ v_a \}$ is a section of $(\cH,U)_K$, 
i.e. $U_{\tilde aa}v_a = v_{\tilde a}$ for all $a \leq \tilde a$.

\

\noindent {\bf Vector states.} Let $(\cA,\jmath)_K$ be a net of 
$\rC^*$-algebras and $(\pi,U)$ a representation. A \emph{vector state} is
given by a family $v := \{ v_a \in \cH , \| v \| = 1 \}$ such that 
$v_{\tilde a} = U_{\tilde aa} v_a$, $a \leq \tilde a$.
This induces the state $\omega_a (\cdot) := (v_a , \cdot v_a)$,
$a \in K$.

A representation does not necessarily yield vector states, indeed
this happens if and only if the underlying Hilbert net bundle $\cH$ 
has sections. In the following  we give an example of net of 
$\rC^*$-algebras having representations but no (vector) states.

\begin{example}
\label{ex_free}
Let $\Gamma_n$ denote the free group with $n$ generators and $\cA_* := C_b(\Gamma_n)$ 
the $\rC^*$-algebra of bounded continuous functions w.r.t. the Haar measure.
We denote the left translation  by
\begin{equation}
\label{La}
\lambda : \Gamma_n \times \cA_* \to \cA_*
\ \ , \ \
g,A \mapsto \lambda_g A =: A_g
\ .
\end{equation}
Since $\Gamma_n$ is not amenable, there are no $\Gamma_n$-invariants states of $\cA_*$.
Now let  $M$ be a space with $\pi_1(M) = \Gamma_n$ (for example, the $n$-{\em bouquet})
and $K$ a good base for the topology of $M$. Fix $o\in K$, so that 
$\pi^o_1(K) \simeq \Gamma_n$,  
and define $(\cA_{**},\lambda_*)_K$ as the $\rC^*$-net bundle  associated to the 
dynamical system $(\cA_*,\pi^o_1(K),\lambda)$ (\ref{Bc:8}). 
%
By Lemma \ref{Ca:4} we conclude that $(\cA_{**},\lambda_*)_K$ does not have states.
%
%
\end{example}

\noindent {\bf Projective states.} Let $(\cA,\jmath)_K$ be a 
net of $\rC^*$-algebras and $(\pi,U)$ a representation over the Hilbert net bundle
$(\cH,U)_K$. Assume that there is a subnet of Hilbert spaces $(\cL,\lambda)$ of
$(\cH,U)_K$ with rank $1$. This means that there is a family 
$v = \{ v_a \in \cH_a \}$ 
of normalized vectors generating the subspaces $\cL_a \subseteq \cH_a$, $a \in K$, 
such that, with
$\lambda = \{ \lambda_{\tilde aa} \in \bT  , a \leq \tilde a \}$,
we have
\[
U_{\tilde aa} v_a = \lambda_{\tilde aa} v_{\tilde a}
\ \ , \ \
a \leq \tilde a
\ .
\]
In this scenario, the state $\omega \in \cS(\cA,\jmath)_K$, 
$\omega_a := (v_a , \cdot v_a)$, $a \in K$,
is well defined, and we say $\omega$ is a {\em projective state of $(\cA,\jmath)_K$
defined in $(\cH,U)_K$}.
The following result is an immediate consequence of the remarks at the beginning
of this appendix.

\begin{proposition}
Let $(\cA,\jmath)_K$ a net of $\rC^*$-algebras and $(\pi,U)$ a representation
over the family of Hilbert spaces $\cH$.
Then projective states defined in $(\cH,U)_K$ are in one-to-one correspondence
with rank one projections of $(U_*,U_*) \subseteq \cB(\cH_o)$.
\end{proposition}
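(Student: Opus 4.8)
The plan is to read the statement off the identification, established in Lemma \ref{lem.rep01}, of the section algebra $\Si(\cB \cH, \ad U)_K$ with the fixed-point algebra of $\ad U_*$ in $\cB(\cH_o)$, which is nothing but the commutant $(U_*,U_*)$: indeed $\ad U_{*,[p]}(T)=T$ for all $[p]\in\pi_1^o(K)$ is equivalent to $T\in(U_*,U_*)$. The key observation is that a projective state defined in $(\cH,U)_K$ is the same datum as a \emph{rank-one projection-valued section} of $(\cB \cH,\ad U)_K$, so the Proposition will follow by restricting the isomorphism of Lemma \ref{lem.rep01} to rank-one projections.

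Concretely, in the first direction I would start from a projective state, i.e.\ a rank-one subnet $(\cL,\lambda)$ generated by normalized vectors $v=\{v_a\}$ with $U_{\tilde aa}v_a=\lambda_{\tilde aa}v_{\tilde a}$, and let $E_a$ be the orthogonal projection of $\cH_a$ onto $\cL_a=\bC v_a$. A short computation then gives
\[
\ad U_{\tilde aa}(E_a) = |\lambda_{\tilde aa}|^2\, E_{\tilde a} = E_{\tilde a},
\]
the phases cancelling precisely because $\lambda_{\tilde aa}\in\bT$; hence $E=\{E_a\}\in\Si(\cB \cH,\ad U)_K$, and by Lemma \ref{lem.rep01} it corresponds to the operator $E_o\in(U_*,U_*)$, a rank-one projection since $E_o$ projects onto $\bC v_o$. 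Conversely, given a rank-one projection $E_o\in(U_*,U_*)$, the construction in the proof of Lemma \ref{lem.rep01} (fix a path frame $P_o$ and set $E_a:=\ad U_{p_{(a,o)}}(E_o)$) yields a section $E$ whose components are again rank-one projections, conjugation by a unitary preserving the rank. Writing $\cL_a:=E_a\cH_a$ and choosing unit generators $v_a$, the section relation $\ad U_{\tilde aa}(E_a)=E_{\tilde a}$ forces $U_{\tilde aa}\cL_a=\cL_{\tilde a}$, so that $U_{\tilde aa}v_a=\lambda_{\tilde aa}v_{\tilde a}$ for suitable $\lambda_{\tilde aa}\in\bT$; this is exactly a rank-one subnet $(\cL,\lambda)$, hence a projective state.

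Finally I would check that these two assignments are mutually inverse, which is immediate once one notes that both pass through the same family of one-dimensional subspaces $\cL_a$, and that the induced state $\omega_a=(v_a,\cdot\, v_a)$ as well as the projection $E_a$ are insensitive to the phase of $v_a$. The only point requiring care — and the main, though modest, obstacle — is the phase bookkeeping: one must verify that the unimodular scalars $\lambda_{\tilde aa}$ drop out both when forming the projections $E_a$ and when passing to vector states, and that rank is preserved along the path-frame extension. Beyond this, the statement is a direct corollary of Lemma \ref{lem.rep01}.
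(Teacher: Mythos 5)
Your proof is correct and follows exactly the route the paper intends: the paper offers no written proof beyond declaring the result an immediate consequence of the appendix's earlier identifications, and your argument is precisely that identification spelled out — rank-one subnets correspond to rank-one projection-valued sections of $(\cB\cH,\ad U)_K$, which by Lemma \ref{lem.rep01} correspond to rank-one projections in the fixed-point algebra of $\ad U_*$, i.e.\ in the commutant $(U_*,U_*)$. The phase cancellation $|\lambda_{\tilde aa}|^2=1$ and the rank-preservation along the path-frame extension are exactly the (small) points worth checking, and you check them.
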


\noindent {\small {\bf Aknowledgements.} 
We gratefully acknowledge the hospitality
and support  of  the Graduate School of 
Mathematical Sciences of the University of Tokyo, where part of this paper has been developed, 
in particular Yasuyuki Kawahigashi for his warm hospitality. We also would like to thank 
\emph{all} the operator algebra group of the University of Roma ``Tor Vergata'', 
Sebastiano Carpi and Fabio Ciolli,
for the several fruitful discussions on the topics treated in  this paper.}


\end{document}